\newtheorem{theorem}{Theorem}
\newtheorem{lemma}{Lemma}
\newtheorem{proposition}{Proposition}
\newtheorem{definition}{Definition}
	\newtheorem{corollary}{Corollary}
\newtheorem{claim}{Claim}
 \theoremstyle{definition}
 \theoremstyle{remark}
 \numberwithin{equation}{section}
\newcommand{\vertiii}[1]{{\left\vert\kern-0.25ex\left\vert\kern-0.25ex\left\vert #1
    \right\vert\kern-0.25ex\right\vert\kern-0.25ex\right\vert}}
\newcommand{\f}[2]{\frac{#1}{#2}}
\newcommand{\cl}{{\mathcal L}}
\newcommand{\al}{\alpha}
\newcommand{\de}{\delta}
\newcommand{\ka}{\kappa}
\newcommand{\la}{\lambda}
\newcommand{\si}{\sigma}
\newcommand{\vp}{\varphi}
\newcommand{\om}{\omega}
\newcommand{\Om}{\Omega}
\newcommand{\rone}{\mathbb R}
\newcommand{\rthree}{\mathbf R^3}
\newcommand{\dpr}[2]{\langle #1,#2 \rangle}
\newcommand{\eps}{\epsilon}
\newcommand{\ca}{\mathcal A}
\newcommand{\ch}{\mathcal H}
\newcommand{\p}{\partial}
\newcommand{\beq}{\begin{equation}}
\newcommand{\eeq}{\end{equation}}
\newcommand{\beqna}{\begin{eqnarray*}}
\newcommand{\eeqna}{\end{eqnarray*}}
\newcommand{\beqn}{\begin{equation*}}
\newcommand{\eeqn}{\end{equation*}}
\newcommand{\bp}{\begin{proof}}
\newcommand{\ep}{\end{proof}}
\newcommand{\bprop}{\begin{proposition}}
\newcommand{\eprop}{\end{proposition}}
\newcommand{\bt}{\begin{theorem}}
\newcommand{\et}{\end{theorem}}
\newcommand{\bex}{\begin{Example}}
\newcommand{\eex}{\end{Example}}
\newcommand{\bc}{\begin{corollary}}
\newcommand{\ec}{\end{corollary}}
\newcommand{\bcl}{\begin{claim}}
\newcommand{\ecl}{\end{claim}}
\newcommand{\bl}{\begin{lemma}}
\newcommand{\el}{\end{lemma}}
\newcommand{\cj}{{\mathcal J}}
\newcommand{\ci}{{\mathcal I}}
\begin{document}

\title
[Stability of periodic  waves in the cubic derivative NLS and quintic NLS]
{On the stability of periodic  waves  for the  cubic derivative NLS  and the quintic NLS }

 \author{Sevdzhan Hakkaev}
 \author{Milena Stanislavova}
 \author{Atanas Stefanov}

 \address{Sevdzhan Hakkaev, Faculty of Arts and Sciences, Department of Mathematics and Computer Science, Istanbul Aydin University, Istanbul, Turkey\\
 \\
 Faculty of Mathematics and Informatics, Shumen University, Shumen, Bulgaria  }
 \email{s.hakkaev@shu.bg}

 \address{Milena Stanislavova
 	Department of Mathematics, University of Kansas, 1460 Jayhawk
 	Boulevard,  Lawrence KS 66045--7523} \email{stanis@ku.edu}
 \address{Atanas Stefanov
 	Department of Mathematics, University of Kansas, 1460 Jayhawk
 	Boulevard,  Lawrence KS 66045--7523}

 \email{stefanov@ku.edu}

 \thanks{ Milena Stanislavova is partially supported by NSF-DMS,    \# 1614734.
 	Atanas Stefanov acknowledges  partial support  from NSF-DMS,   \# 1908626.}

\subjclass{Primary 35Q55, 35B35, 35C08, 35Q51, 35Q40}

\keywords{ derivative NLS, periodic waves, stability}

\date{\today}

\begin{abstract}
 We study the periodic cubic derivative non-linear Schr\"odinger equation (dNLS) and the (focussing) quintic non-linear Schr\"odinger equation (NLS). These are both  $L^2$ critical dispersive models, which exhibit threshold type behavior, when posed  on the line ${\mathbb R}$.

We describe the (three parameter) family of non-vanishing bell-shaped solutions for the periodic problem, in closed form.  The main objective of the paper is to  study their stability with respect to co-periodic perturbations.
We analyze these waves for stability in the framework of the cubic DNLS.  We provide a  criteria for stability, depending on the sign of a scalar quantity. The proof relies on an instability index count, which in turn critically  depends on a  detailed spectral analysis of  a self-adjoint matrix Hill operator. We exhibit a region in parameter space, which produces spectrally stable waves.

We also provide an explicit description of the stability of all bell-shaped traveling waves for the quintic NLS, which turns out to be a two parameter subfamily of the one exhibited for DNLS.  We give a complete description of their stability - as it turns out some are spectrally stable, while other are spectrally unstable, with respect to co-periodic perturbations.

\end{abstract}

\maketitle

\section{Introduction}
  We are interested in  the cubic derivative non-linear Schr\"odinger equation (DNLS) in periodic context.  More specifically, we consider
  \begin{equation}
  \label{dnl:10}
  i q_t+\p_x^2 q+i (|q|^2 q)_x=0.
  \end{equation}
  where $q$ is subject to the  periodic boundary conditions,  $q(-T)=q(T), q_x(-T)=q_x(T)$. This particular model (along with some variations),  was  derived to model polarized Alfven waves in a magnetized plasma, under a constant magnetic field.  As usual, the conserved quantities provide an important threshold information with respect to the well-posedness.  Let us state for the record that, at least for smooth solutions, \eqref{dnl:10} conserves the mean, energy and mass. That is,
  $
  \int_{-T}^T q(t,x) dx=const.$, and
   \begin{eqnarray}
   \label{dnl:14}
   E &=& \int_{-T}^T |q_x|^2dx + \f{3}{2} \Im \int_{-T}^T |q|^{2} q_x  \bar{q} dx+\f{1}{2} \int_{-T}^T |q|^6 dx=const.\\
   \label{dnl:15}
   M &=& \int_{-T}^T |u|^2dx = const.
   \end{eqnarray}

   The basic question,  that one has got to be  immediately interested in,  is the well-posedness of the Cauchy problems \eqref{dnl:10} and \eqref{n:10}. For DNLS,  posed on the real line, local well-posedness is established , for data in $H^s(\rone), s\geq \f{1}{2}$, in \cite{Tak,Tak1}.  This  is  sharp, in the sense that the   data to solution map fails to be Lipschitz in $H^s, s<\f{1}{2}$, \cite{BL,Tak1}. Global solutions may also be constructed, under a specific smallness condition on $\|u_0\|_{L^2}$, \cite{HO, HO1,Oz}.
   There are however intriguing recent results, which make use of the completely integrable structure of \eqref{dnl:10}, \cite{JLPS, LPS1, LPS2, PShima1, PShima} that establish global well-posedness for DNLS, under no smallness requirements, albeit for a.e.  data in weighted Sobolev spaces. Finally, very recently, it was shown in \cite{BG} that the DNLS is globally well-posed for all data in $H^{\f{1}{2}}(\rone)$, without any smallness restrictions. Turning to the existence and the stability of solitary waves for DNLS,
   there has been quite a surge in activity in the last twenty years. In \cite{GW}, the authors have shown the stability of the cubic DNLS solitons,  while \cite{CO} has improved upon their results. Recently, \cite{MTX} has established stability results for sum of two solitary DNLS waves. At this point, we would like to draw the reader's attention to the important work \cite{LSS}. In it, the authors have constructed solitary wave solutions on the line $\rone$ for the generalized DNLS (i.e. with general power $|u|^{2\si} u_x$ in the non-linearity) and they have studied their respective stability. The results obtained therein are about exhaustive and introduce some new methods, that we use ourselves herein.

   For the periodic problem, local well-posedness on the (almost) optimal space $H^s({\mathbb T}),  s>\f{1}{2}$ was established in \cite{Herr}. It is immediate, due to conservation laws, that one can extend such $H^1({\mathbb T})$ solutions to global, under a small $L^2$ data assumption, but the question on whether  large global solutions persist remains open for the periodic cubic DNLS.
 There has been quite an activity recently  on the construction of new solutions to \eqref{dnl:10} in the periodic context, see for example \cite{CZ}  for some new quasi-periodic solutions, using algebro-geometric methods. Also, in \cite{UD}, the authors use the complete integrability of \eqref{dnl:10} to study the spectrum  of the linearized operators  by
  relating it to its Lax spectrum.

   Next, we would like to explore a connection of \eqref{dnl:10} to a related non-linear Schr\"odinger equation, namely
     \begin{equation}
     \label{10}
     i u_t+\p_x^2 u+i |u|^{2} u_x =0, \ \  (t,x)\in \rone_+\times [-T,T]
     \end{equation}
   It is well-known and easy to check  fact is that $q$ is a solution to \eqref{dnl:10} if and only if
   \begin{equation}
   \label{gauge}
   u(t,x)=q(t,x) e^{i \f{1}{2} \int_{-T}^x |q(t,y)|^2 dy}
   \end{equation}
   is a solution\footnote{It is important to observe that in the gauge relation
   	\eqref{gauge}, we have that $|q|=|u|$, so the phase function can be written either with $q$ or $u$ inside of it} of \eqref{10}. Note however that if $q$ is periodic on $[-T,T]$, $u$ is not necessarily periodic on $[-T,T]$. In fact, there is no standard well-posedness theory for \eqref{10} in the periodic context, other than the following - given initial data $u_0$ for \eqref{10}, one  can translate to $q_0(x)=u_0(x)  e^{-i \f{1}{2} \int_{-T}^x |u_0(y)|^2 dy}$ and {\it if  $q_0$ is periodic}, then solve \eqref{dnl:10}. That is, for the Cauchy problem of \eqref{10}, one can solve in the periodic setting for say $H^1[-T,T]$ initial data $u_0: \int_{-T}^T |u_0(x)|^2 dx\in \{0, \pm 4\pi, \pm 8\pi, \ldots\}$, where the solution is recovered through \eqref{gauge}.
   This is something we will need to eventually address.
   On the other hand,  the gauged equation \eqref{10} is better suited for our purposes, as it yields better coordinates for our periodic wave solutions as well as the corresponding linearized problem.

       Another model of interest, which as is turns out is very much
       related  to both \eqref{dnl:10} and \eqref{10},  is the quintic non-linear Schr\"odinger equation (NLS), which takes the form
        \begin{equation}
        \label{n:10}
         \left\{\begin{array}{l}
         iu_t +u_{xx}+b |u|^4 u=0, -T<x<T,\\
         u(0,x)=u_0(x)
         \end{array}
         \right.
        \end{equation}
       subject to the same periodic boundary conditions.  In addition, we consider only the focusing case, so $b>0$.

     The question for local and  global well-posedness for the quintic NLS, \eqref{n:10} is well-understood. To summarize the classical by now results, the local well-posedness,  holds under the assumption $u_0\in H^s, s>\f{1}{2}$, both when the problem is posed on the line $\rone$ or on the torus ${\mathbb T}$.  Such solutions can be extended to global solutions, provided $\|u_0\|_{L^2}$ is small enough. On the other hand, on the line, it is well-known that appropriately chosen initial data,  close to the bell-shaped traveling soliton, will produce a solution, which experience a finite time blow up, that is the soliton experiences instability by blow-up.
       It is not at all clear however, whether or not blow up for large $L^2$ data,  happens in the periodic case.  That is,  it is  an interesting open problem,  whether or not solutions with large $L^2$ data can persist globally. This applies to both DNLS and quintic NLS in the periodic context.

    This is actually one of the motivations behind our work.
     As is well-known, most  of the  dynamical properties of the system,  can be inferred from its periodic waves and the behavior of the Cauchy problem for data close to them.
       Therefore, to understand better the dynamics of the problem, the natural place to start is data close to the periodic waves, in other words their stability.
      In this work, our main goal is to investigate  the stability of the corresponding   waves in the periodic case, which is an outstanding open question in the theory.  We work only with the case of cubic derivative non-linearity, which is physically best motivated, but also because we need explicit formulas for our calculations\footnote{In the work \cite{LSS}, the authors exhibit explicit $sech$ type solutions for all powers $\si$}. We explicitly identify all bell-shaped traveling waves, which turn out to be a rich, three parameter family of explicit solutions. For the  analysis of the matrix Hill operator, our approach mirrors the approach in \cite{LSS}, by relying on the spectral  properties of the scalar linearized operators $\cl_\pm$.  In addition, we use topological methods to establish the expected spectral properties as they are difficult to obtain in a direct manner. Finally, we use the instability index counting theory (instead of the direct Grillakis-Shatah-Strauss approach in \cite{LSS}, where it is somewhat easier to compute the necessary quantities), due to the need to apply topological methods for the  spectral problem \eqref{170} below.

      Next, we give  the description of the waves.

     \subsection{Description of   the  solutions for DNLS}
      We now construct the periodic wave solutions for the DNLS, \eqref{dnl:10}. We look instead for periodic wave solutions  for the gauged equation \eqref{10}, which will then later translate into true DNLS waves via the gauge transformation \eqref{gauge}.

      More precisely, let  $u(x,t)=\Phi(x-ct) e^{i \om t}$ and plug it inside the model \eqref{10}. We obtain the equation
     \begin{equation}
     \label{20}
     -\om \Phi-i c \Phi'+\Phi''+i |\Phi|^{2} \Phi'=0
     \end{equation}
 Further, we use the assignment\
 \begin{equation}
 \label{pel12}
  \Phi(y)=\phi(y)e^{i \theta(y)}, \ \  \theta(y)=\frac{c}{2}y-\frac{1}{4} \int_{-T}^y \phi^{2}(\eta)d\eta+const.,
 \end{equation}
 to reduce the problem to one, where we look  for a real-valued wave $\phi$.  
 	 In terms of $\phi$,  we obtain the following equation
 \begin{equation}
 \label{30}
 -\phi''+(\om-\f{c^2}{4})\phi+\f{c}{2}\phi^{3}-\f{3}{16} \phi^{5} =0\ \ \ -T\leq y\leq T.
 \end{equation}
   Here, it has to be noted that  more general solutions are possible, and this has been done in \cite{CUP}. More specifically, the authors have considered the most general case, by using the ansatz \eqref{pel12}. This introduces a system of two second order ODE for $\theta$ and $\phi$, see equations (5.1) and (5.2) in  \cite{CUP}. Our case corresponds to the case $a=0$ in their setup. On the other hand, it is argued in \cite{CUP} that the solutions with the additional term share the same stability/instability properties with the solutions for which this term set to zero.
 
   In our situation, we concentrate to the case \eqref{pel12}, which forces the relation \eqref{30}. Note that we will be looking for periodic solutions $\phi$ of \eqref{30}, but we should be aware that the resulting function $\Phi(y)=\phi(y)e^{i \theta(y)}$ may not be periodic. The point is that, we will need to translate this back to true periodic waves for the original DNLS equation \eqref{dnl:10} in order to impose the necessary periodic conditions, and we shall do so later on, see \eqref{32} below.
 	
 Turning back to the solution set of \eqref{30}, it is the case that the set of solutions for \eqref{30} is fairly rich as it depends on three independent parameters,  see Proposition \ref{prop:10} below.  Related to this,  the  periodic waves were  described in detail in some similarly general situations, see for example \cite{CP} for   the case of a quadratic nonlinearity.  We will restrict our analysis to the set  of bell-shaped solutions, a notion which we introduce next.

  To this end, we shall need the concept  of a decreasing rearrangement of a function. More precisely, introduce for $\al>0$, $d_f(\al)=|\{x\in (-T,T): |f(x)|>\al\}|$ and for each $t\in (-T,T)$, let $f^*(t):=  \inf \{s>0: d_f(s)<\f{|t|}{2}\}$. Note that $f^*$ is positive, even, decreasing in $[0,T]$. 
\begin{definition}
	\label{defi:10}
	We say that a real-valued function $f\in H^1_{per.}[-T,T]$ is  bell-shaped,  if it coincides with its decreasing rearrangement $f^*$.   Equivalently, $f\in H^1_{per.}[-T,T]$ is bell-shaped, if it is positive and it has a single maximum on $[-T,T]$. 
\end{definition}
{\bf Remark:} Our results will apply equally well to waves $\phi$, so that $|\phi|$ is bell-shaped (i.e. $-\phi$ is bell-shaped), but we shall not dwell on this henceforth.

Going back to \eqref{30} - after multiplying by $\phi$ and  integrating in the equation,
  we get,
 \begin{equation}
 \label{35}
\phi'^2=-\frac{1}{16}\phi^6+\frac{c}{4}\phi^4+\left( \om-\frac{c^2}{4}\right)\phi^2+a,
\end{equation}
where  $a$ is a constant of integration.
We look for solution in the above equation in the form
$\varphi=\phi^2$.   In particular, $\vp$ needs to be a positive function. We get the following equation for $\vp$
\begin{equation}
\label{38}
\varphi'^2=\frac{1}{4} \varphi\left[
-\varphi^3+4c\varphi^2+16\left( \om-\frac{c^2}{4}\right)\varphi+a
\right]=:\frac{1}{4} \varphi\left[a-R(\vp)\right],
\end{equation}
where the cubic polynomial $R$ is given by
$$
R(z)=z^3-4c z^2-16\left(\om-\f{c^2}{4}\right)z.
$$
  Note that as we are looking for bell-shaped and non-vanishing
	solution $\vp$, it must be that all three roots of $R$, $\vp_1\leq 0< \vp_2<\vp_3$ are real, and at least two of them are positive. This is the situation of interest.

We henceforth assume\footnote{Even though, there are certainly interesting solutions for $\om<0$ as well} $\om>0$.
\begin{proposition}(Existence of non-vanishing bell-shaped waves)
	\label{prop:10}
	
	Assume $\om> 0$. We have the following possibilities
	\begin{enumerate}
		\item If  $\om-\f{c^2}{4}>0$, and
		$$
		  \frac{16}{27} \left(\sqrt{c^2+12 \om}+2 c\right) \left(c^2-12 \om-c \sqrt{c^2+12 \om}\right)<a<0,
		  $$
		  then the algebraic equation $R(z)=a$ has three roots $\vp_1<0< \vp_2<\vp_3$, depending on $a,\om,c$ in a smooth manner. As a consequence, \eqref{38} has an unique  bell-shaped solution $\vp$, which satisfies
		  $$
		  \vp(0)=\vp_3, \vp(-T)=\vp(T)=\vp_2.
		  $$
		  Moreover,  we have the explicit formula for the solution
		  \begin{equation}
		  \label{2.5}
		  \phi^2(\xi)=\varphi(\xi)=\frac{\varphi_3(\varphi_2-\varphi_1)+\varphi_1(\varphi_3-\varphi_2)
		  	sn^2\left(
		  	\frac{\xi}{2g}, \kappa\right)}{(\varphi_2-\varphi_1)+(\varphi_3-\varphi_2)sn^2\left(
		  	\frac{\xi}{2g}, \kappa\right)},
		  \end{equation}
		  where
		  \begin{equation}
		  \label{r:24}
		  g=\frac{2}{\sqrt{\varphi_3(\varphi_2-\varphi_1)}}, \ \
		  \kappa^2=\frac{-\varphi_1(\varphi_3-\varphi_2)}{\varphi_3(\varphi_2-\varphi_1)}\in (0,1).
		  \end{equation}
		  \item  If  $\om-\f{c^2}{4}>0$, then for every $a>0$, there is unique solution $\vp_3$ of $a=R(\vp)$, with $\vp_3>0$. As a consequence, there is unique bell-shaped solution $\vp: \vp(0)=\vp_3$, $\vp(-T)=\vp(T)=0$.
		  \item If $\om-\f{c^2}{4}>0$ and
		  $$
		  a\leq \frac{16}{27} \left(\sqrt{c^2+12 \om}+2 c\right) \left(c^2-12 \om-c \sqrt{c^2+12 \om}\right),
		  $$
		  there are no bell-shaped solutions of \eqref{38}.
	\end{enumerate}
	Assume now $\om>0, \om-\f{c^2}{4}<0$. We have the following possibilities:
		\begin{enumerate}
			\item  Assume $c>0$ and
			$$
			\frac{16}{27} \left(\sqrt{c^2+12 \om}+2 c\right) \left(c^2-12 \om-c \sqrt{c^2+12 \om}\right)<a<0,
			$$
			then  the algebraic equation $R(z)=a$ has three roots $\vp_1<0< \vp_2<\vp_3$, depending on $a,\om,c$ in a smooth manner. There exists unique bell-shaped solution, so that $\vp(0)=\vp_3, \vp(-T)=\vp(T)=\vp_2$. The solution $\vp$ is given by the exact same formula \eqref{2.5} as above.
			\item Assume $\om-\f{c^2}{4}<0$. Then, for each $a>0$, there is an unique positive root $\vp_3$. Thus, there is unique bell-shaped solution of \eqref{38}, which satisfies  $\vp(0)=\vp_3, \vp(-T)=\vp(T)=0$.
			\item Assume $c>0$ and
			$$
			a\leq \frac{16}{27} \left(\sqrt{c^2+12 \om}+2 c\right) \left(c^2-12 \om-c \sqrt{c^2+12 \om}\right),
			$$
			then  the equation $R(\vp)=a$ has no positive roots and hence, \eqref{38} has no bell-shaped solutions.
			\item Assume $\om-\f{c^2}{4}<0, c<0$ and $a<0$. Then  the equation $R(\vp)=a$ has no positive roots and hence, \eqref{38} has no bell-shaped solutions.
		\end{enumerate}
\end{proposition}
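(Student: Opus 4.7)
The plan is to reduce the problem to a real-variable analysis of the cubic $R$ and then to integrate \eqref{38} in closed form via a Möbius--elliptic substitution.

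First, I would analyze $R$ at the algebraic level. The factorization
$$
R(z)=z\bigl(z-(2c-4\sqrt{\om})\bigr)\bigl(z-(2c+4\sqrt{\om})\bigr)
$$
exhibits the three roots of $R$ as $0$ and $2c\pm 4\sqrt{\om}$; their sign pattern is governed exactly by $\mathrm{sgn}(\om-c^2/4)$ and $\mathrm{sgn}(c)$, which recovers the partition of the parameter space into the seven sub-cases of the statement. A short computation gives the critical points $z_\pm=\tfrac{1}{3}(4c\pm 2\sqrt{c^2+12\om})$, with $z_-$ the local maximum and $z_+$ the local minimum of $R$, and evaluating $R$ at $z_+$ produces
$$
R(z_+)=\tfrac{16}{27}\bigl(2c+\sqrt{c^2+12\om}\bigr)\bigl(c^2-12\om-c\sqrt{c^2+12\om}\bigr),
$$
which is exactly the threshold appearing in the statement.

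Second, a bell-shaped non-vanishing solution of \eqref{38} corresponds to the trapped orbit of the planar ODE in $[\varphi_2,\varphi_3]$, where $\varphi_1\le 0<\varphi_2<\varphi_3$ are three real roots of $R(z)=a$ on which the right-hand side is non-negative. Reading the graph of $R$ case by case, the joint requirements $\varphi_1\le 0$ and $\varphi_2>0$ hold iff $R(z_+)<a<0$, giving the $a$-intervals of Cases (1) and (4); the vanishing bell-shaped profiles of Cases (2) and (5) arise for $a>0$ from the branch of $R$ on which $\varphi$ sweeps $[0,\varphi_3]$; Cases (3), (6), (7) correspond to values of $a$ for which $R(z)=a$ admits no positive roots with the required spacing, and the non-existence follows by a direct graphical check on $R$. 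Smooth dependence of the roots $\varphi_j$ on $(a,\om,c)$ on the open admissible set follows from the implicit function theorem, using that the discriminant of $R(z)-a$ is non-zero there.

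Third, once the admissible triple $(\varphi_1,\varphi_2,\varphi_3)$ is fixed, \eqref{38} reads
$$
(\varphi')^2=\tfrac{1}{4}\,\varphi\,(\varphi-\varphi_1)(\varphi-\varphi_2)(\varphi_3-\varphi),
$$
a standard quartic-under-the-radical ODE. I would impose $\varphi(0)=\varphi_3$ and apply the Möbius--trigonometric substitution
$$
\varphi=\frac{\varphi_3(\varphi_2-\varphi_1)+\varphi_1(\varphi_3-\varphi_2)\sin^2\psi}{(\varphi_2-\varphi_1)+(\varphi_3-\varphi_2)\sin^2\psi},
$$
which bijects $[\varphi_2,\varphi_3]$ with the range of $\sin^2\psi$ (so that $\sin^2\psi=0\Leftrightarrow \varphi=\varphi_3$ and $\sin^2\psi=1\Leftrightarrow \varphi=\varphi_2$). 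A short simplification reduces the equation to the Jacobi-amplitude ODE $\psi'=\tfrac{1}{2g}\sqrt{1-\kappa^2\sin^2\psi}$, with $g$ and $\kappa$ as in \eqref{r:24}; hence $\sin\psi(\xi)=\mathrm{sn}(\xi/(2g),\kappa)$, and substituting back yields \eqref{2.5}.

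I expect the main obstacle to be the sign bookkeeping in the regime $\om<c^2/4$ with $c>0$ (Cases (4)--(6)), where the three non-zero roots of the unperturbed cubic $R$ lie on the positive axis: producing the desired root $\varphi_1<0$ of $R(z)=a$ (Case (4)) and excluding spurious all-positive-root bell-shaped configurations both require a careful monotonicity argument in $a$ based on the precise locations of $z_\pm$ and the critical value $R(z_+)$. Once this is in place, the closed-form solution \eqref{2.5} through the elliptic substitution carries over uniformly to all admissible regimes.
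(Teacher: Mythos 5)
Your proposal is correct and follows essentially the same route as the paper: locate the critical points of the cubic $R$, evaluate $R$ at the local minimum to obtain the threshold $\frac{16}{27}(\sqrt{c^2+12\om}+2c)(c^2-12\om-c\sqrt{c^2+12\om})$, read off the root configuration case by case, and integrate \eqref{38} via the standard elliptic substitution leading to \eqref{2.5} with $g,\kappa$ as in \eqref{r:24}. If anything, your plan is more detailed than the paper's own argument (which compresses the root analysis and the ODE existence into two sentences), and your flagged concern about extra all-positive-root orbits in the regime $\om<c^2/4$, $c>0$, $a>0$ is a genuine subtlety that the paper does not address.
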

\begin{proof}
	We note first that the function $R$ has a local minimum at $z=\frac{2}{3} \left(\sqrt{c^2+12 w}+2 c\right)$ and it is the case that
	$$
	R\left(\frac{2}{3} (\sqrt{c^2+12 w}+2 c)\right)= 	\frac{16}{27} \left(\sqrt{c^2+12 \om}+2 c\right) \left(c^2-12 \om-c \sqrt{c^2+12 \om}\right).
	$$
	This implies all the statements about the roots of the algebraic  equation $a=R(z)$. The  existence of solutions made in Proposition \ref{prop:10} follows from an elementary ordinary equations reasoning.
	
	In the cases of three different roots, it remains to establish the formula \eqref{2.5}.
	 If $\varphi_1, \varphi_2, \varphi_3$ are nonzero roots of the
	 polynomial
	 $0=-t^3+4ct^2+16\left( \om-\frac{c^2}{4}\right)t+a$, then the Viet's formulas yield
	 then
	 \begin{equation}
	 \label{63}
	  \left\{ \begin{array}{ll}
	  \varphi_1+\varphi_2+\varphi_3=4c\\
	  \\
	  \varphi_1\varphi_2+\varphi_1\varphi_3+\varphi_2\varphi_3=-16\left(\om-\frac{c^2}{4}\right)\\
	  \\
	  \varphi_1\varphi_2\varphi_3=a.
	  \end{array} \right.
	 \end{equation}
	If
	$\varphi_1<0<\varphi_2<\varphi_3$ and
	$\varphi_2<\varphi<\varphi_3$, we get
	$$\int_{\varphi}^{\varphi_3}{\frac{ds}{\sqrt{s(s-\varphi_1)(s-\varphi_2)(\varphi_3-s)}}}=\frac{1}{2}(\xi-\xi_0)$$
	and the solution $\varphi$ is given by \eqref{2.5}.
	Finally, this solution is $2 T$ periodic, with $\vp(0)=\vp_3$, while $\vp(T)=\vp_2$, where
	\begin{equation}
	\label{61}
	T=2 g K(\ka).
	\end{equation}
\end{proof}
Our next results details the translation to the periodic waves of the DNLS problem \eqref{dnl:10}.
\begin{proposition}
	\label{prop:21}
	Let $\phi$ be the wave constructed in Proposition \ref{prop:10}, that is it is given by \eqref{2.5}. Then,
	\begin{equation}
	\label{dnl:sol}
		q(t,x)=\phi(x-c t) e^{i \om t} e^{i\left(\f{c}{2}(x-ct) -\f{3}{4}\int_{-T}^{x-ct} \phi^2(y) dy+const.\right)},
	\end{equation}
	is a periodic  wave for \eqref{dnl:10}, provided
	\begin{equation}
	\label{32}
	 cT - \frac{3}{4} \int_{-T}^T \phi^{2}(y)dy\in \{0, \pm 2\pi, \pm 4\pi, \ldots\}.
	 \end{equation}
\end{proposition}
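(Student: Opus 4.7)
The plan is to (a) verify by direct substitution that $q(t,x)$ defined in \eqref{dnl:sol} satisfies the DNLS equation \eqref{dnl:10} whenever $\phi$ satisfies the profile equation \eqref{30}, and then (b) deduce the quantization condition \eqref{32} by imposing the periodic boundary conditions on $q$ and $q_x$ at $x=\pm T$. Going through the gauge identity \eqref{gauge} explicitly is possible but messier than a direct check, since the cumulative integral $\frac{1}{2}\int_{-T}^x|u(t,y)|^2\,dy$ picks up a genuine $t$-dependence that must be reconciled with the clean ansatz on the right of \eqref{dnl:sol}; direct substitution avoids this bookkeeping.

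For step (a), I would set $\xi=x-ct$ and $\Theta(\xi)=\frac{c}{2}\xi-\frac{3}{4}\int_{-T}^{\xi}\phi^{2}(y)\,dy+\mathrm{const.}$, so that $q(t,x)=\phi(\xi)e^{i(\omega t+\Theta(\xi))}$ and $\Theta'(\xi)=\frac{c}{2}-\frac{3}{4}\phi^{2}(\xi)$. Computing $iq_t+q_{xx}+i(|q|^2 q)_x$, factoring out the common exponential, and separating real and imaginary parts, the imaginary part collects to $-c\phi'+2\Theta'\phi'+\Theta''\phi+3\phi^{2}\phi'$, which vanishes identically after inserting $\Theta'$ and $\Theta''=-\frac{3}{2}\phi\phi'$. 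The real part simplifies, after expanding $(\Theta')^{2}$ and collecting powers of $\phi$, to $\phi''-[(\omega-\frac{c^{2}}{4})\phi+\frac{c}{2}\phi^{3}-\frac{3}{16}\phi^{5}]$, which vanishes by \eqref{30}. Thus \eqref{dnl:sol} gives a classical solution of \eqref{dnl:10} on $\R\times\R$.

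For step (b), I use that by Proposition \ref{prop:10} the profile $\phi$ is $2T$-periodic, hence so are $\phi'$, $\phi^{2}$, and $\Theta'=\frac{c}{2}-\frac{3}{4}\phi^{2}$. Consequently $|q(t,-T)|=|q(t,T)|$ automatically, and the amplitude prefactor in $q_x=(\phi'+i\Theta'\phi)e^{i(\omega t+\Theta)}$ also matches at $x=\pm T$. Thus both $q(t,\cdot)$ and $q_x(t,\cdot)$ are $2T$-periodic in $x$ iff $e^{i[\Theta(T-ct)-\Theta(-T-ct)]}=1$. Since $\Theta'$ is $2T$-periodic, $\Theta$ is quasi-periodic with $\xi$-independent increment $\Theta(\xi+2T)-\Theta(\xi)=\int_{\xi}^{\xi+2T}\Theta'(\eta)\,d\eta=cT-\frac{3}{4}\int_{-T}^{T}\phi^{2}(y)\,dy$. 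Applying this at $\xi=-T-ct$ gives
\begin{equation*}
\Theta(T-ct)-\Theta(-T-ct)=cT-\frac{3}{4}\int_{-T}^{T}\phi^{2}(y)\,dy,
\end{equation*}
and the phase-matching condition becomes precisely \eqref{32}. The only subtle point, and the one I expect to need the most care in writing up, is this last observation: although $\Theta$ itself is not $2T$-periodic, its $2T$-increment is a $t$-independent constant, so \eqref{32} constrains only the wave parameters $(c,\omega,a,T)$ and not the time $t$.
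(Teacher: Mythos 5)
Your proof is correct, and it takes a genuinely different route from the paper's. The paper obtains \eqref{dnl:sol} by applying the gauge transformation \eqref{gauge} to the already-constructed solution $u=\Phi(x-ct)e^{i\om t}$ of the gauged equation \eqref{10}, and then simply asserts that the $2T$-periodicity of $q$ and $q_x$ is ``easily checked'' to be equivalent to \eqref{32}. You instead verify \eqref{dnl:sol} by direct substitution into \eqref{dnl:10}: your computation is right --- the imaginary part cancels identically using $\Theta'=\f{c}{2}-\f{3}{4}\phi^2$ and $\Theta''=-\f{3}{2}\phi\phi'$, and the real part reduces exactly to the profile equation \eqref{30}. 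Your choice buys genuine rigor at the two points the paper glosses over. First, as you observe, on a finite interval the phase $\f{1}{2}\int_{-T}^x|u(t,y)|^2\,dy$ in \eqref{gauge} acquires a $t$-dependence through the base point (one has $\int_{-T}^x\phi^2(y-ct)\,dy=\int_{-T-ct}^{x-ct}\phi^2$, which differs from $\int_{-T}^{x-ct}\phi^2$ by a non-constant function of $t$), so the paper's one-line derivation, taken literally, yields \eqref{dnl:sol} only up to an extra time-dependent phase that is silently absorbed into the ``const.''; the direct check avoids having to reconcile this. Second, your observation that $\Theta(\xi+2T)-\Theta(\xi)=cT-\f{3}{4}\int_{-T}^T\phi^2$ is independent of $\xi$ (hence of $t$) is precisely the content of the ``easily checked'' equivalence with \eqref{32}, and you supply it. What the paper's route buys instead is motivation: it shows where the formula and the coefficient $\f{3}{4}$ come from and ties the DNLS wave to the gauged coordinates used throughout the subsequent linearized analysis.
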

\begin{proof}
	We know that $\Phi(x-ct) e^{i \om t} e^{i \theta(x-ct)} $ is a solution of \eqref{10}. We use the gauge transformation to obtain
	$$
	q(t,x)=u(t,x) e^{-i\f{1}{2} \int_{-T}^x |u|^2}=\phi(x-c t) e^{i \om t} e^{i\left(\f{c}{2}(x-ct) -\f{3}{4}\int_{-T}^{x-ct} \phi^2(y) dy+const.\right)},
	$$
	as specified. Finally, it can be easily checked that the function given in \eqref{dnl:sol} is $2T$ periodic, together with its derivative, exactly when \eqref{32} holds true.
\end{proof}
\subsection{The linearized equations for DNLS}
\label{sec:dnls1.2}
We need to derive  the relevant  linearized equations. Our main interest is in the spectral stability of the periodic waves \eqref{dnl:sol}, constructed in Proposition \ref{prop:21}. If one tries directly the linearization ansatz suggested by the periodic  wave solution \eqref{dnl:sol}, the resulting linear equations are not in a very convenient
  form that could easily be analyzed, see the discussion after formula \eqref{70}. So,  it is better  to argue in the framework suggested by the  waves for the gauged equation \eqref{10}.  We shall need an orthogonality relation, see \eqref{ans:18}, which will make this possible.
In accordance with the formula \eqref{dnl:sol}, we take the ansatz
\begin{equation}
\label{ans:20}
q=(\phi(x-ct)+e^{\la t} \eta(x-c t)) e^{i (\om t +\f{c}{2}(x-ct)- \f{3}{4}\int_{-T}^{x-ct} \phi^2(y) dy)}.
\end{equation}
We can now write the eigenvalue problem for $\eta$ as follows - denoting $\si(x)=\f{c}{2} x - \f{3}{4}\int_{-T}^{x} \phi^2 dy$ and plug it in \eqref{dnl:10}. Ignoring $O(\eta^2)$ terms,  we obtain
\begin{eqnarray*}
0 &=& i(\la \eta -c \eta_y+i \eta(\om-c \si')) + (\eta_{yy}+2i \eta_y \si'+\eta(i\si''-(\si')^2))+ \\
&+& i [\phi^{2}\eta_y+2\phi^2 \Re \eta'+z(2\phi\phi'+i \si'\phi^2)+\Re \eta(4\phi \phi'+2\phi^2i \si')].
\end{eqnarray*}
Taking into account
$$
\si'=\f{c}{2}-\f{3}{4} \phi^2, \si''=-\f{3}{2} \phi\phi',
$$
and splitting $\eta=(\Re \eta, \Im \eta)$, we arrive at
 \begin{equation}
 \label{169a}
 \cj \cl \left(\begin{array}{c}
 \Re \eta \\ \Im \eta
 \end{array}
 \right)= \la \left(\begin{array}{c}
 \Re \eta \\ \Im \eta
 \end{array}
 \right),
 \end{equation}
 where
 \begin{eqnarray*}
 	\cj &=&\left(\begin{array}{cc}
 		0 & 1 \\ -1 & 0
 	\end{array}
 	\right),
 	\cl=\left(\begin{array}{cc}
 		\cl_1 & N \\ N^* & \cl_2
 	\end{array}
 	\right),\\
 	\cl_1 &=&  -\p_{yy}+\left(\om-\f{c^2}{4}\right)+\f{ 3 c}{2}\phi^{2}-\f{11}{16} \phi^{4} \\
 	N &=& -\f{3}{2} \phi^{2} \p_y+\f{3}{2} \phi \phi', \ \ N^* =  \f{3}{2} \phi^{2} \p_y+\f{9}{2} \phi \phi', \\
 	\cl_2 &=&  -\p_{yy}+\left(\om-\f{c^2}{4}\right)+\f{c}{2}\phi^{2}-\f{3}{16} \phi^{4}
 \end{eqnarray*}
By direct inspection,  $\cl \left(\begin{array}{c}
0  \\ \phi
\end{array}
\right)= 0$, whence
$$
0=\dpr{\cl \left(\begin{array}{c}
	\Re \eta \\ \Im \eta
	\end{array}
	\right)}{\left(\begin{array}{c}
	0  \\ \phi
	\end{array}
	\right)}=\la \dpr{\cj^{-1} \left(\begin{array}{c}
	\Re \eta \\ \Im \eta
	\end{array}
	\right)}{\left(\begin{array}{c}
	0  \\ \phi
	\end{array}
	\right)}=\la \dpr{\Re \eta}{\phi}.
$$
It follows that for $\la\neq 0$, $\dpr{\Re \eta}{\phi}=0$. Therefore, at least as far as the analysis of the linearized  problem is concerned, we might take $\eta:\Re\eta\perp \phi$. That is, it suffices to consider  the following simplified reduced  linearization
\begin{equation}
\label{ans:18}
q=(\phi(x-ct)+\eta(t, x-c t)) e^{i (\om t +\theta(x-ct)- \f{1}{2}\int_{-T}^{x-ct} \phi^2(y) dy)}, \Re \eta\perp \phi.
\end{equation}
 We would like to translate this particular form of the perturbation of $q$ into the corresponding problem for $u$ in the gauged equation. According to the gauge transformation \eqref{gauge}, we have that
$$
u(t,x)=q(t,x) e^{\f{i}{2} \int_{-T}^x |q(t,y)|^2 dy}
$$
Using the particular form of \eqref{ans:20} and expanding in powers of $\eta$, we obtain
\begin{eqnarray*}
	& & u(t,x) = (\phi(x-ct)+e^{\la t} \eta(x-c t))e^{i \om t} e^{i\left(\f{c}{2}(x-ct) - \f{1}{4}\int_{-T}^{x-ct} \phi^2(y) dy+\int_{-T}^{x-ct} \phi(y) \Re \eta(y)  dy\right)}+O(\eta^2)\\
	&=& \left(\phi(x-ct)+e^{\la t} \eta(x-c t)+i \phi(x-ct)\int_{-T}^{x-ct} \phi(y) \Re \eta(y)  dy \right)e^{i \om t}  e^{i\left(\f{c}{2}(x-ct) - \f{1}{4}\int_{-T}^{x-ct} \phi^2(y) dy\right)}+\\
	&+& O(\eta^2)=:(\phi(x-ct)+ e^{\la t} z(x-c t)) e^{i (\om t+\theta(x-ct))}+O(\eta^2),
\end{eqnarray*}
where we have made the assignment $z:=\eta +i \phi \int_{-T}^x \phi(y) \Re \eta(y)  dy$. Note that we still need to address the periodicity of $z$, and we do this now. We have, by the periodicity of $\eta$ and $\Re \eta\perp \phi$,
\begin{equation}
\label{ans:16}
z(T)=\eta(T)+i \phi \dpr{\phi}{\Re \eta}=\eta(T)=\eta(-T)=z(-T).
\end{equation}
and similarly, $z'(T)=z'(-T)$.
We have, for the  real and imaginary parts,
\begin{equation}
\label{ans:30}
\left\{\begin{array}{l}
\Re z= \Re \eta \\
\Im z= \Im \eta + \phi(x) \int_{-T}^x \phi(y) \Re \eta(y)  dy
\end{array}
\right.
\end{equation}
or conversely
\begin{equation}
\label{ans:35}
\left\{\begin{array}{l}
\Re \eta= \Re z  \\
\Im \eta= \Im z -  \phi(x) \int_{-T}^x \phi(y) \Re z (y)  dy
\end{array}
\right.
\end{equation}
In other words, starting with the appropriate form \eqref{ans:18}, we have represented
\begin{equation}
\label{ans:10}
u(t,x)=(\phi(x-ct)+e^{\la t} z(x-ct))e^{i (\om  t+\theta(x-ct))}+O(z^2)
\end{equation}
where $z$ is a periodic increment and incidentally, by virtue of \eqref{ans:35}, we also have $\Re z\perp \phi$. 

Plug in the formula \eqref{ans:10} in  \eqref{10}, and  using that $\phi$ satisfies \eqref{30}, and   ignoring all terms in the form $O(z^2)$, we arrive at  the following linear equation for the increment $z$
$$
i(\la z -c z_y+i z(\om-c \theta')) + (z_{yy}+2i z_y \theta'+z(i\theta''-(\theta')^2))+ i [\phi^{2}(z_y+i z \theta')+2\phi(\phi'+i\theta' \phi )\Re z]=0
$$
Noting
$$
\theta'=\f{c}{2}-\f{\phi^{2}}{4}, \ \  \theta''=-\f{1}{2} \phi\phi',
$$
we can write
\begin{eqnarray*}
	& &  i \la z +z_{yy} +\left[-\om+\f{c^2}{4}-i \f{1}{2} \phi \phi'-\f{c}{2} \phi^{2}  +\f{3}{(16} \phi^{4}                \right]z+\\
	&+&  i z_y \f{1}{2} \phi^{2}+2i  \phi \left(\phi' +i\f{c}{2}\phi-i\f{1}{4}\phi^{3}\right)\Re z=0
\end{eqnarray*}
Split $z$  in real and imaginary parts, namely $z=v+i w$. In terms of $v,w$, we have a linear system that reads as follows
\begin{eqnarray*}
	& & \la v+w_{yy}+   \f{1}{2} \phi^{2}v_y    +  \f{3}{2}\phi \phi'   v + \left[-\om+\f{c^2}{4}-\f{c}{2} \phi^{2} +\f{3}{(16}  \phi^{4}\right] w=0 \\
	& &
	- \la w+v_{yy} - \f{1}{2} \phi^{2}w_y+\left[-\om+\f{c^2}{4}-\f{3c}{2} \phi^{2}  +\f{11}{16} \phi^{4} \right]v+\f{1}{2}\phi \phi' w=0
\end{eqnarray*}
We can write the eigenvalue problem   in the form
\begin{equation}
\label{169}
\cj \cl \left(\begin{array}{c}
v \\ w
\end{array}
\right)=\la \left(\begin{array}{c}
v \\ w
\end{array}
\right)
\end{equation}
where
\begin{eqnarray*}
	\cj &=&\left(\begin{array}{cc}
		0 & 1 \\ -1 & 0
	\end{array}
	\right),
	\cl=\left(\begin{array}{cc}
		\cl_1 & M \\ M^* & \cl_2
	\end{array}
	\right), \\
	M &=& \f{1}{2} \phi^{2} \p_y-\f{1}{2} \phi \phi', \ \ M^* =  -\f{1}{2} \phi^{2} \p_y-\f{3}{2} \phi \phi'.
\end{eqnarray*}
Here, we  introduce the linearized operators associated with the  profile equation \eqref{30}, namely   the second order Schr\"odinger operators
\begin{eqnarray*}
	\cl_+ &=&  -\p_{yy} +(\om-\f{c^2}{4}) +\f{3c}{2}  \phi^{2}  -\f{15}{16} \phi^{4}            \\
	\cl_- =\cl_2 &=& -\p_{yy} +(\om-\f{c^2}{4}) +\f{c}{2}\phi^{2}-\f{3}{16}\phi^{4}.
\end{eqnarray*}
They would be instrumental in the eigenvalue problem, associated with the periodic waves   under consideration.
Record the  eigenvalue problem \eqref{169} in the compact form
\begin{equation}
\label{170}
 \cj \cl \vec{{\mathbb U}}=\la \vec{{\mathbb U}}, \vec{{\mathbb U}}\in H^2_{per}[-T,T]\times H^2_{per}[-T,T].
\end{equation}
 We note that the linearized  problem \eqref{170} is in the standard Hamiltonian  form $\cj \cl, \cj^*=-\cj$, $\cl^*=\cl$. As we have mentioned above the corresponding linearized problem one obtains for $\eta$ in \eqref{ans:18} is  not  as convenient, see the discussion after \eqref{70} below. However, our analysis shows that as first order approximations, this problem is equivalent to \eqref{170}, through the change of variables \eqref{ans:30}.  More precisely,  starting with $z$, which solves \eqref{170} for any $\la\neq 0$,
	we see that $\cl z=\la \cj^{-1} z$, whence $\cj^{-1} z\perp Ker(\cl)$. It follows that $\Re z=v\perp \phi$, as $\left(\begin{array}{c} 0 \\ \phi \end{array}\right)\in Ker(\cl)$. This can be seen directly, but see also  Proposition \ref{le:10} below.
	 Next, one substitutes $\eta$ instead of $z$ according to \eqref{ans:30} in \eqref{170}, we obtain the linearized problem for $\eta$.
Based on this analysis, we say that the waves given in \eqref{dnl:sol} are stable, if the eigenvalue problem \eqref{170} is stable. More formally,
\begin{definition}
	\label{defi:20}
	We say that the wave $\Phi(x-c t) e^{i \om t}e^{-\f{3 i}{4} \int_{-T}^{x-ct} \phi^2(y) dy+const.}$ is spectrally stable solution of \eqref{dnl:10}, if the eigenvalue problem \eqref{170} does not have non-trivial solution $(\la, \vec{{\mathbb U}})$, with $\Re \la>0$.
\end{definition}
{\bf Remark:} In principle, an instability for the waves would mean that there exists $\la:\Re\la>0$, so that $\la\in \si(\cj\cl)$. As all the potentials in $\cl$ are periodic, it is a standard fact that all possible solutions of \eqref{170} represent eigenvalues only\footnote{ Indeed, as the resolvent operators $(\cj\cl-\la)^{-1}, \la\in\rone, \la>>1$ are smoothing of order two, this guarantees that $(\cj\cl-\la)^{-1}:L^2[-T,T]\times L^2[-T,T] \to L^2[-T,T]\times L^2[-T,T] $ is compact, whence its spectrum consists entirely of eigenvalues converging towards zero. It follows that $\si(\cj\cl)$ consists of eigenvalues only.}.

\subsection{Main results: DNLS}
Before we  proceed with the statements, we shall need to introduce an object that will play a role in the statement.
First, it is established,  see Proposition \ref{le:10} below, that
$$
Ker(\cl)=span\left\{\left(\begin{array}{c} 0 \\ \phi \end{array}\right),   \left(\begin{array}{c} \phi'\\ -\f{\phi^3}{4} \end{array}\right) \right\},
$$
whence  $\cl:Ker(\cl)^\perp\to Ker(\cl)^\perp$ is well-defined unbounded operator.
Given that, we consider a symmetric $2\times 2$ matrix $D$, with entries
\begin{eqnarray*}
	D_{11}  &=&
	\dpr{\cl^{-1} \left(\begin{array}{c} \phi \\ 0 \end{array}\right)}{\left(\begin{array}{c} \phi \\ 0 \end{array}\right)} , \\
	D_{1 2}=D_{2 1}  &=&  \dpr{\cl^{-1} \left(\begin{array}{c} \phi \\ 0 \end{array}\right)}{
		\left(\begin{array}{c} \f{\phi^{3}}{4} \\ \phi'\end{array}\right)}, \\
	D_{2 2} &=&
	\dpr{\cl^{-1} \left(\begin{array}{c}  \f{\phi^{3}}{4} \\ \phi'\end{array}\right) }{
		\left(\begin{array}{c} \f{\phi^{3}}{4} \\ \phi'\end{array}\right)},
\end{eqnarray*}
Note that since  $\left(\begin{array}{c} \phi \\ 0 \end{array}\right), \left(\begin{array}{c}  \f{\phi^{3}}{4} \\ \phi'\end{array}\right) \in Ker(\cl)^\perp$,  the elements $\cl^{-1} \left(\begin{array}{c} \phi \\ 0 \end{array}\right),	\cl^{-1} \left(\begin{array}{c}  \f{\phi^{3}}{4} \\ \phi'\end{array}\right) \in Ker(\cl)^\perp$ are uniquely defined.
 \begin{theorem}
	\label{theo:10}
	Consider   the waves constructed in Proposition \ref{prop:21}, subject to the condition \eqref{32}. These waves represent all non-vanishing bell-shaped periodic  waves for \eqref{10}.
	
	These waves are spectrally stable if and only if the matrix $D$ defined above has exactly one negative eigenvalue. Equivalently (since from general considerations, $D$ has at most one negative eigenvalue), the waves are stable, if and only if
	\begin{equation}
	\label{crit}
		\det(D)=	\dpr{\cl^{-1} \left(\begin{array}{c} \phi \\ 0 \end{array}\right)}{\left(\begin{array}{c} \phi \\ 0 \end{array}\right)} 	\dpr{\cl^{-1} \left(\begin{array}{c}  \f{\phi^{3}}{4} \\ \phi'\end{array}\right) }{
			\left(\begin{array}{c} \f{\phi^{3}}{4} \\ \phi'\end{array}\right)} - \dpr{\cl^{-1} \left(\begin{array}{c} \phi \\ 0 \end{array}\right)}{
			\left(\begin{array}{c} \f{\phi^{3}}{4} \\ \phi'\end{array}\right)}^2<0.
	\end{equation}
\end{theorem}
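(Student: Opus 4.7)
\medskip

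\textbf{Framework.} The eigenvalue problem \eqref{170} is in canonical Hamiltonian form $\cj\cl$ with $\cj^*=-\cj$ and $\cl^*=\cl$, so the natural tool is the Hamiltonian instability index count of Kapitula--Kevrekidis--Sandstede (alternately Pelinovsky),
\begin{equation*}
k_r + 2 k_c + 2 k_i^- \;=\; n(\cl) - n(D),
\end{equation*}
where $k_r,k_c,k_i^-$ count, respectively, the positive real eigenvalues, complex quadruplets, and purely imaginary eigenvalue pairs with negative Krein signature of $\cj\cl$; here $D$ is the symmetric matrix encoding $\cl^{-1}$ on representatives for the generalized kernel. Since spectral stability (Definition \ref{defi:20}) means $k_r=k_c=0$, and $k_i^-\geq 0$, the plan is to establish $n(\cl)=1$ and $n(D)\leq 1$, and then conclude that stability is equivalent to $n(D)=1$, i.e.\ $\det D<0$.

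\medskip

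\textbf{The kernel of $\cl$.} Both $\left(\begin{smallmatrix} 0\\ \phi\end{smallmatrix}\right)$ and $\left(\begin{smallmatrix} \phi'\\ -\phi^{3}/4\end{smallmatrix}\right)$ belong to $Ker(\cl)$: the first because $\cl_-\phi=0$ is a restatement of \eqref{30}, the second by differentiating \eqref{30} in $y$ and combining the result with the actions of $M$ and $M^*$. These reflect the phase and translation invariances of \eqref{10}. That $Ker(\cl)$ is exactly two--dimensional is a standard Floquet/Wronskian statement for periodic second order systems; it is codified as Proposition \ref{le:10} below, and it ensures that $\cl:Ker(\cl)^\perp\to Ker(\cl)^\perp$ is invertible.

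\medskip

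\textbf{Morse index of $\cl$ --- the main obstacle.} Because $M,M^*$ are first order differential operators (not multipliers), $\cl$ does not decouple as a direct sum and the naive additive bound $n(\cl)\leq n(\cl_+)+n(\cl_-)$ is too loose. I plan to exploit $\cl_-\phi=0$ --- which identifies $\phi$ as the ground state of $\cl_-$, so that $n(\cl_-)=0$ --- to perform a Schur complement reduction
\begin{equation*}
n(\cl) \;=\; n\bigl(\cl_+-M\cl_-^{-1}M^*\bigr)\big|_{\phi^\perp},
\end{equation*}
where $\cl_-^{-1}$ is understood on $span\{\phi\}^\perp$. A direct diagonalization of this reduced, non--local operator across the three parameter family $(\om,c,a)$ is not feasible, so the plan is a continuation argument. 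First, one checks $n(\cl)=1$ at a convenient limiting configuration (for example $a\to 0^-$ in \eqref{2.5}, where the wave and the scalar operators $\cl_\pm$ simplify substantially), informed by the bell--shaped structure of $\phi$ and the fact that $\phi'$ changes sign exactly once on $[-T,T]$ and so plays the role of the ``second'' Sturm eigenfunction. One then transports the count along the parameter space; level crossings through zero are ruled out by the just--established rigidity of $Ker(\cl)$, which remains exactly two--dimensional throughout. This is the topological argument alluded to in the introduction, and it is the technical heart of the proof.

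\medskip

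\textbf{The matrix $D$ and conclusion.} Set $\vec{g}_1=\left(\begin{smallmatrix}\phi\\ 0\end{smallmatrix}\right)$ and $\vec{g}_2=\left(\begin{smallmatrix}\phi^{3}/4\\ \phi'\end{smallmatrix}\right)$. A direct pairing against the kernel basis above shows $\vec{g}_1,\vec{g}_2\in Ker(\cl)^\perp$, so $\cl^{-1}\vec{g}_j$ and hence $D$ are well defined. The min--max restriction principle yields $n(D)\leq n(\cl)=1$ automatically, which is the ``general consideration'' referred to in the statement. The index formula then becomes
\begin{equation*}
k_r+2k_c+2k_i^- \;=\; 1-n(D)\in\{0,1\}.
\end{equation*}
If $n(D)=1$ then $k_r=k_c=k_i^-=0$ and the wave is spectrally stable; if $n(D)=0$, parity forces $k_r=1$ (and $k_c=k_i^-=0$), producing a real instability. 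For a symmetric $2\times 2$ matrix with $n(D)\leq 1$, the condition $n(D)=1$ is equivalent to $\det D<0$, which is the criterion \eqref{crit}.
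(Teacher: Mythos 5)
Your overall skeleton coincides with the paper's: the Hamiltonian index count $k_r+2k_c+2k_i^-=n(\cl)-n(D)$, the reduction of stability to $n(D)=1$ once $n(\cl)=1$ and $n(D)\le n(\cl)$ are in hand, and the parity argument forcing $k_r=1$ when $n(D)=0$. But the two steps you defer are exactly the technical content of the theorem, and one of your justifications is wrong. You assert that $\dim Ker(\cl)=2$ is ``a standard Floquet/Wronskian statement for periodic second order systems.'' It is not: for the $2\times 2$ matrix operator $\cl$ the periodic kernel can a priori have dimension up to four, and the hard part is excluding kernel elements in the even--odd sector $L^2_{even}\times L^2_{odd}$. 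The paper does this by an explicit Green's function computation for $\cl_-^{-1}$, which reduces the question to the solvability condition $8+d_1\dpr{\cl_+^{-1}\phi}{\phi}=0$ with $d_1=\left(\int_0^T\phi^{-2}\right)^{-1}$, i.e.\ to the non-vanishing of $4\int_{-T}^T\phi^{-2}+\dpr{\cl_+^{-1}\phi}{\phi}$; this quantity is then evaluated in closed form via elliptic integrals and checked to be positive. Nothing ``standard'' delivers this, and without it the operator $\cl^{-1}$ in the definition of $D$ is not even well defined on the right complement.

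On the Morse index, your Schur complement $\cl_+-M\cl_-^{-1}M^*$ is essentially the paper's equation \eqref{l1.5}, but the paper's actual route to $n(\cl)\le 1$ is the completed-square identity \eqref{70}, $\dpr{\cl U}{U}=\dpr{\cl_+u_1}{u_1}+\int_{-T}^T\left[\f{1}{2}\phi^2u_1+\phi(\phi^{-1}u_2)_y\right]^2$, which makes $n(\cl)\le n(\cl_+)=1$ immediate --- provided one already knows $n(\cl_+)=1$ and $Ker(\cl_+)=span[\phi']$. That prerequisite is itself nontrivial in the periodic setting (no decay, no standard Sturm count of the ground state): the paper obtains it by a variational construction of constrained minimizers (giving $n(\cl_+)=1$ at one parameter point) combined with a continuation argument whose crossing-exclusion rests on showing $Q,Q^3\in Ran(\tilde\cl_+)$ via differentiation in the parameters and a numerically verified non-degeneracy condition. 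Your proposal is silent on this. Finally, for the lower bound $n(\cl)\ge 1$ you propose checking a limiting configuration such as $a\to 0^-$ and continuing; you give no reason this limit is tractable, whereas the paper gets $n(\cl)\ge 1$ indirectly: it shows $sgn(D_{11})=sgn(\dpr{\cl_+^{-1}\phi}{\phi})$, exhibits parameter values where this is negative, concludes $n(D)\ge 1$ there, hence $n(\cl)\ge n(D)\ge 1$ there, and only then continues using the rigidity of $Ker(\cl)$. As written, your argument has genuine gaps at both the kernel computation and the Morse index, and would not close without importing the paper's machinery.
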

We have the following corollary
\begin{corollary}
	\label{cor:pel1}
	The non-vanishing bell-shaped periodic waves considered in Theorem \ref{theo:10} are stable, provided $	\dpr{\cl^{-1} \left(\begin{array}{c} \phi \\ 0 \end{array}\right)}{\left(\begin{array}{c} \phi \\ 0 \end{array}\right)}<0$.
	Since, we also establish that
	 $$
	 sgn(	\dpr{\cl^{-1} \left(\begin{array}{c} \phi \\ 0 \end{array}\right)}{\left(\begin{array}{c} \phi \\ 0 \end{array}\right)} )=sgn(\dpr{\cl_+^{-1} \phi}{\phi}),
	 $$
	  an alternative stability criteria is  $\dpr{\cl_+^{-1} \phi}{\phi}<0$.
\end{corollary}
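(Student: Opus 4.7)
The first assertion is immediate from Theorem~\ref{theo:10}. Since $D$ is a real symmetric $2\times 2$ matrix and $D_{11}=\dpr{De_1}{e_1}$, the Rayleigh quotient gives $\lambda_{\min}(D)\le D_{11}<0$, so $D$ has at least one negative eigenvalue. Combined with the ``at most one negative eigenvalue'' bound established in the proof of Theorem~\ref{theo:10}, this forces $n(D)=1$, which is precisely the stability criterion there. Equivalently, $\det D<0$: the second eigenvalue must be non-negative (otherwise $n(D)\geq 2$), and under the generic nondegeneracy implicit in Theorem~\ref{theo:10} it is strictly positive.

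For the sign identity, the plan is to identify both quadratic forms with the same parameter derivative of $\|\phi\|_{L^2}^2$, up to a common positive factor. On the scalar side I would differentiate the profile equation \eqref{30} in $\om$ to obtain $\cl_+\phi_\om=-\phi$. Translation invariance of \eqref{30} gives $\cl_+\phi'=0$; since the bell-shaped $\phi$ is even and $\phi'$ is odd, $\phi_\om$ is the unique even preimage in $(\ker\cl_+)^\perp$, and hence
\[
\dpr{\cl_+^{-1}\phi}{\phi}\;=\;-\dpr{\phi_\om}{\phi}\;=\;-\tfrac12\tfrac{d}{d\om}\|\phi\|_{L^2}^2.
\]
On the matrix side I would perform a Schur-complement reduction in the block form of $\cl$. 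Writing $\binom{f}{g}=\cl^{-1}\binom{\phi}{0}$, and using the factorizations $M=\tfrac12\phi^{3}\,\p_y\phi^{-1}$ and $M^*=-\tfrac12\phi^{-1}\,\p_y\phi^{3}$, one has $M\phi=0$, so $M^*f\perp\ker\cl_2=\mathrm{span}(\phi)$ and the second block equation is uniquely solved by $g=-\cl_2^{-1}M^*f$ with $g\perp\phi$. Substituting into the first block yields $Sf=\phi$ with Schur complement $S=\cl_1-M\cl_2^{-1}M^*$, so $D_{11}=\dpr{S^{-1}\phi}{\phi}$. A direct identification then shows that $f$ coincides, modulo $\ker\cl_+=\mathrm{span}(\phi')$ (which is harmless because $\phi\perp\phi'$), with $\phi_\om$, giving $D_{11}=-\dpr{\phi_\om}{\phi}$ and hence the asserted sign equality.

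The principal technical obstacle is the careful bookkeeping of the kernel degeneracies: since both $\cl_+$ and $\cl_-=\cl_2$ fail to be invertible, every inversion must be interpreted as a pseudo-inverse on the orthogonal complement of the corresponding kernel, and one must verify that the constructed pair $(f,g)$ lies in
\[
\ker(\cl)^\perp=\mathrm{span}\Bigl\{\begin{pmatrix}0\\ \phi\end{pmatrix},\ \begin{pmatrix}\phi'\\ -\phi^{3}/4\end{pmatrix}\Bigr\}^\perp.
\]
This reduces to parity considerations for $\phi$ and its parameter derivatives, which are inherited from the bell-shaped structure of Proposition~\ref{prop:10} together with the evenness of the profile equation \eqref{30}; once in hand, the Schur-complement identification of $S$ with $\cl_+$ on the span of $\phi$ is the crucial step.
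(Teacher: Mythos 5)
The first half of your argument is fine and coincides with the paper's Proposition \ref{prop:32}: $D_{11}<0$ forces $n(D)\geq 1$, the index identity $k_{Ham}=n(\cl)-n(D)$ together with $n(\cl)\leq 1$ gives $n(D)=1$ and $k_{Ham}=0$, hence stability.

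The sign identity is where you have a genuine gap. Your Schur-complement step tacitly evaluates $M\cl_2^{-1}M^*f=\tfrac14\phi^{4}f$, so that $S=\cl_1-M\cl_2^{-1}M^*$ collapses to $\cl_+$ and $D_{11}=\dpr{\cl_+^{-1}\phi}{\phi}$ on the nose. That is only the formal computation in which $(\p_y)^{-1}\p_y=I$. On the circle, the pseudo-inverse $\cl_2^{-1}$ must carry a multiple of the second, non-periodic homogeneous solution $\psi=\phi\int_0^x\phi^{-2}$, with the coefficient chosen to restore periodicity of $g$; since $M\psi=\tfrac12\phi\neq 0$, this produces a nonlocal rank-one correction that no parity argument removes. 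The correct identity (the paper's \eqref{l1.8a}) is $M\cl_2^{-1}M^*f=\tfrac14\phi^{4}f-\tfrac14 d_1d_f\,\phi$ with $d_f=\int_0^T\phi f$, $d_1=(\int_0^T\phi^{-2})^{-1}$, so $f=(1-\tfrac14 d_1d_f)\,\cl_+^{-1}\phi$ and
\begin{equation*}
D_{11}=\f{4\int_{-T}^T\phi^{-2}}{4\int_{-T}^T\phi^{-2}+\dpr{\cl_+^{-1}\phi}{\phi}}\;\dpr{\cl_+^{-1}\phi}{\phi},
\end{equation*}
which equals $\dpr{\cl_+^{-1}\phi}{\phi}$ only when the latter vanishes. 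The asserted sign equality therefore hinges on the strictly positive sign of $4\int_{-T}^T\phi^{-2}+\dpr{\cl_+^{-1}\phi}{\phi}$ — the very quantity the paper must also control to prove $\dim\ker(\cl)=2$ in Proposition \ref{le:10}, and which it verifies by explicit symbolic computation over the whole parameter region (Figures \ref{fig:Ker1}, \ref{fig:Kerm1}). Your proof never generates this term and so cannot address its sign. A secondary defect: the identification $\cl_+^{-1}\phi=-\phi_\om$ ignores that the period of the family depends on the parameters, so the naive $\phi_\om$ is not $2T$-periodic and does not lie in $D(\cl_+)$; the paper must rescale to a fixed period and use the particular combination of $\p_\mu$- and $\p_\ka$-derivatives in \eqref{j:22} (leading to \eqref{k:68}) to compute $\dpr{\cl_+^{-1}\phi}{\phi}$ legitimately.
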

{\bf Remark:}   We have an explicit but long formula for $\dpr{\cl_+^{-1} \phi}{\phi}$, depending on the alternative parametrization of the waves in $g,\ka, \mu=\om-\f{c^2}{4}$ as in Proposition \ref{prop:10}. For example,  we have produced several representative slices of the graphs of $\dpr{\cl_+^{-1} \phi}{\phi}$ - see Figure \ref{fig:pic4} for $\mu=1$ and Figure \ref{fig:pic5} for $\mu=-1$.
	
	Related to this discussion, we observe from the  graphs that $\dpr{\cl_+^{-1} \phi}{\phi}$   changes sign over the three dimensional domain of parameters. One might somehow conjecture that in line with the Vakhitov-Kolokolov theory,   the condition
	$\dpr{\cl_+^{-1} \phi}{\phi}>0$ by itself, might  imply instability.  We have a result, which shows that this is not the case.
	\begin{corollary}
		\label{cor:pel2}
		There are  waves of the type described in Theorem \ref{theo:10}, which are stable and at the same time $	\dpr{\cl^{-1} \left(\begin{array}{c} \phi \\ 0 \end{array}\right)}{\left(\begin{array}{c} \phi \\ 0 \end{array}\right)}>0$.
	\end{corollary}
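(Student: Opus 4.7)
The plan is to combine the sign-change of $D_{11}$ (established in the Remark preceding the Corollary) with a continuity argument for $\det(D)$, leveraging the fact that by Theorem \ref{theo:10} stability is characterized by $\det(D)<0$ alone, not by the sign of $D_{11}$. The key observation is that on the hypersurface $\Sigma:=\{D_{11}=0\}$ in parameter space, the determinant automatically reduces to $\det(D)=-D_{12}^2\leq 0$, which is strictly negative (hence gives stability) at any point where $D_{12}\neq 0$; by continuity of the $D_{ij}$ in the parameters, both the sign $D_{11}>0$ and the inequality $\det(D)<0$ will then persist on an open neighborhood lying on the positive side of $\Sigma$.

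Concretely, I would proceed in three steps. First, I would use the explicit formula for $\dpr{\cl_+^{-1}\phi}{\phi}$ alluded to in the Remark (expressed in terms of the parameters $g,\ka,\mu=\om-c^2/4$ from Proposition \ref{prop:10}), together with the representative slices in Figures 4--5, to identify a smooth arc $\ga(t)$ in the admissible parameter region along which $D_{11}(\ga(0))<0$ and $D_{11}(\ga(1))>0$, with $D_{11}$ crossing zero transversally at some $t_0\in(0,1)$. Corollary \ref{cor:pel1} already guarantees stability at $\ga(0)$, and it is only the side $t>t_0$ that needs new work.

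Second, I would verify that $D_{12}(\ga(t_0))\neq 0$. This is the main obstacle, since the entries $D_{ij}$ involve $\cl^{-1}$ applied to the explicit generators and do not reduce to elementary expressions; closed-form evaluation of $D_{12}$ on all of $\Sigma$ is not feasible. The efficient route is to restrict to a convenient limit of the wave family---for instance the small-amplitude regime $\ka\to 0$ or the solitonic limit $\ka\to 1$---in which all three entries of $D$ admit tractable asymptotic expansions, and to check that the zero sets of $D_{11}$ and of $D_{12}$ are distinct there, so that $D_{12}\neq 0$ on a non-empty open portion of $\Sigma$. Equivalently, one may simply evaluate the three quadratic forms numerically at one carefully chosen point on $\{D_{11}=0\}$.

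Third, fixing such a point $p_0=\ga(t_0)\in\Sigma$ with $D_{12}(p_0)\neq 0$, the entries $D_{ij}$ depend continuously on the parameters (by the smooth dependence of $\phi$ on $(g,\ka,\mu)$ asserted in Proposition \ref{prop:10}, together with the smooth dependence of $\cl^{-1}|_{Ker(\cl)^\perp}$ on the coefficients of $\cl$), so there is an open neighborhood $U$ of $p_0$ on which $\det(D)<0$. Since $D_{11}$ vanishes transversally at $p_0$, the intersection $U\cap\{D_{11}>0\}$ is non-empty. Any parameter value drawn from this intersection yields a wave which is spectrally stable by Theorem \ref{theo:10} while simultaneously satisfying $\dpr{\cl^{-1}(\phi,0)^T}{(\phi,0)^T}=D_{11}>0$, proving the Corollary.
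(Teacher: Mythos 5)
Your overall strategy is exactly the paper's: sit on the boundary hypersurface $\{D_{11}=0\}$, observe that there $\det(D)=-D_{12}^2$, conclude stability from Theorem \ref{theo:10} provided $D_{12}\neq 0$, and then push off the boundary by continuity to reach points with $D_{11}>0$ while $\det(D)<0$ persists. That skeleton is correct. However, there is a genuine gap at the step you yourself flag as ``the main obstacle'': you never actually establish that $D_{12}\neq 0$ at a point of $\{D_{11}=0\}$, proposing instead an unperformed asymptotic expansion as $\ka\to 0$ or $\ka\to 1$, or a numerical evaluation at one point. As written, the corollary is not proved; the conclusion hinges entirely on this unverified nondegeneracy.

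The paper closes this gap with a short analytic argument (Section \ref{sec:5.2}) that you are missing, and your claim that ``closed-form evaluation of $D_{12}$ on $\Sigma$ is not feasible'' is too pessimistic. On the set $\{D_{11}=0\}$ one has $\dpr{\cl_+^{-1}\phi}{\phi}=0$, the first component of $\cl^{-1}\left(\begin{array}{c}\phi\\ 0\end{array}\right)$ is $f=const.\,\cl_+^{-1}\phi$ with $\dpr{f}{\phi}=0$, the second component simplifies to $g=-\f{\phi}{2}\int_0^x f\phi$, and an integration by parts collapses $D_{12}=\dpr{f}{\f{\phi^3}{4}}+\dpr{g}{\phi'}$ to $const.\,\dpr{\cl_+^{-1}\phi}{\phi^3}$. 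It then suffices to show that $\dpr{\cl_+^{-1}\phi}{\phi}$ and $\dpr{\cl_+^{-1}\phi}{\phi^3}$ cannot vanish simultaneously, which follows from a soft spectral argument: with $\Psi_0$ the positive ground state of $\cl_+$, both $\dpr{\phi}{\Psi_0}$ and $\dpr{\phi^3}{\Psi_0}$ are positive, so one can choose $c_0>0$ with $\phi-c_0\phi^3\perp\Psi_0$, and since $\phi,\phi^3\perp\phi'=Ker(\cl_+)$, the combination $\dpr{\cl_+^{-1}\phi}{\phi}-c_0\dpr{\cl_+^{-1}\phi}{\phi^3}$ is evaluated on the subspace $\{\Psi_0,\phi'\}^\perp$ where $\cl_+^{-1}$ is positive definite, hence is nonzero. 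This works at \emph{every} point of $\{D_{11}=0\}$, with no limiting regime or numerics required, and is what makes the continuity argument in your third step legitimate.
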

	For the proof, we mention first that we establish, see Section \ref{sec:5.2} below, that the quantities $D_{11}$ and $D_{12}$ do not vanish simultaneously. Thus, consider a parameter point $P_0$ for which  $D_{11}(P_0)=\dpr{\cl^{-1} \left(\begin{array}{c} \phi \\ 0 \end{array}\right)}{\left(\begin{array}{c} \phi \\ 0 \end{array}\right)}=0$ and $P_0\in \p\{P: D_{11}(P)<0\}$. As $D_{11}(P_0)=0$, it must be that  $D_{12}(P_0)\neq 0$. So,
	$$
	det(D(P_0))=D_{11}(P_0) D_{22}(P_0)-D_{12}^2(P_0)=-D_{12}^2(P_0)<0,
	$$
	whence this particular periodic wave is still stable. In fact, by the continuity with respect to parameters, $det(D)<0$ in a neighborhood, so all of these waves are still stable. On the other hand, as $P_0$ is on the boundary of $\{P:D_{11}(P)<0\}$,  for some of them $D_{11}(P)>0$. This completes the proof.
\subsection{Main results: quintic NLS}
Starting with the quintic NLS, \eqref{n:10}, after a change of variables
$u(t,x)\to \al u(b t, \sqrt{b} x)$ and $\al^4=\f{3}{16}$, we rescale to the following problem
\begin{equation}
\label{n:12}
i u_t+u_{xx}+\f{3}{16} |u|^4 u=0, -T\leq x\leq T,
\end{equation}
with a rescaled $T$, in comparison to \eqref{n:10}. This transformation allows us to consider \eqref{n:12}, instead of the more general \eqref{n:10}.

Evidently, plugging in the standing wave ansatz $u=e^{i \om t} \phi, \phi>0, \om>0$, we obtain the profile equation for the wave
\begin{equation}
\label{p:10}
-\phi''+\om \phi - \f{3}{16} \phi^5=0, -T\leq x\leq T.
\end{equation}
Clearly, this is exactly the profile equation \eqref{30}, with $c=0$. Consequently, we have the bell-shaped solutions described in Proposition \ref{prop:10}. We state the existence result.
\begin{proposition}
	\label{prop:12}
Let  $\om> 0$. Then, these are all  bell-shaped solutions of \eqref{p:10}:
	\begin{enumerate}
		\item If
		$$
		-\frac{64\sqrt{12}}{9}\om^{\f{3}{2}} <a<0,
		$$
		then, $R(\vp)=a$ has three roots $\vp_1<0<\vp_2<\vp_3$ and $\vp(0)=\vp_3,  \vp(-T)=\vp(T)=\vp_2$, described by
		\begin{equation}
		\label{66}
		\left\{ \begin{array}{ll}
		\varphi_1+\varphi_2+\varphi_3=0, \\
		\\
		\varphi_1\varphi_2+\varphi_1\varphi_3+\varphi_2\varphi_3=-16\om, \\
		\\
		\varphi_1\varphi_2\varphi_3=a.
		\end{array} \right.
		\end{equation}
		The solution is then given by
		\begin{eqnarray}
		\label{2.52}
		\phi^2(\xi)=\varphi(\xi) &=& \frac{\varphi_3(\varphi_2-\varphi_1)+\varphi_1(\varphi_3-\varphi_2)
			sn^2\left(
			\frac{\xi}{2g}, \kappa\right)}{(\varphi_2-\varphi_1)+(\varphi_3-\varphi_2)sn^2\left(
			\frac{\xi}{2g}, \kappa\right)}, \\
		\label{r:26}
		g &=&\frac{2}{\sqrt{\varphi_3(\varphi_2-\varphi_1)}}, \ \
		\kappa^2=-\frac{\varphi_1(\varphi_3-\varphi_2)}{\varphi_3(\varphi_2-\varphi_1)}\in (0,1).
		\end{eqnarray}
		
		\item  For every $a>0$, there is unique solution $\vp_3$ of $a=R(\vp)$, with $\vp_3>0$. As a consequence, there is unique bell-shaped solution $\vp: \vp(0)=\vp_3$, $\vp(-T)=\vp(T)=0$.
	\end{enumerate}
\end{proposition}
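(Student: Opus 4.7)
The plan is to recognize that the profile equation \eqref{p:10} is precisely \eqref{30} specialized to $c = 0$, so the existence analysis and the explicit integration performed in the proof of Proposition \ref{prop:10} transfer verbatim. Since $\om > 0$ by assumption, one has $\om - c^2/4 = \om > 0$, which restricts us to the first group of three cases in Proposition \ref{prop:10}; the second group (which requires $\om - c^2/4 < 0$) is vacuous when $c = 0$, so nothing new needs to be proved there.

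First I would set $\vp = \phi^2$ to convert \eqref{p:10} into the first-order relation \eqref{38}, which here reduces to $\vp'^2 = \tfrac{1}{4}\vp[a - R(\vp)]$ with cubic $R(z) = z^3 - 16\om z$. With $c = 0$ the Viet identities \eqref{63} collapse to \eqref{66}, since the coefficient of $z^2$ in $R$ vanishes. The main arithmetic step is to verify that the threshold stated in the proposition coincides with the general one from Proposition \ref{prop:10}: the cubic $R$ attains its local minimum at $z_0 = 4\sqrt{\om/3}$, and one computes
\[
R(z_0) = z_0\bigl(z_0^2 - 16\om\bigr) = -\frac{128\sqrt{3}}{9}\om^{3/2} = -\frac{64\sqrt{12}}{9}\om^{3/2},
\]
which agrees with $\frac{16}{27}(\sqrt{c^2+12\om}+2c)(c^2-12\om-c\sqrt{c^2+12\om})$ evaluated at $c=0$.

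For case (1), the constraint $-\tfrac{64\sqrt{12}}{9}\om^{3/2} < a < 0$ places $a$ strictly between $R(z_0)$ and $R(0) = 0$, which forces three real roots $\vp_1 < 0 < \vp_2 < \vp_3$; the closed form \eqref{2.52} together with the parameters in \eqref{r:26} then follow by separation of variables and the standard substitution converting the resulting elliptic integral into $sn$, exactly as in the proof of Proposition \ref{prop:10}. For case (2), $a > 0$ makes $R(z) - a$ strictly increasing on $[0,\infty)$ with a single positive root $\vp_3$, and integrating $\vp' = \pm \tfrac{1}{2}\sqrt{\vp(a-R(\vp))}$ produces the unique bell-shaped solution with $\vp(0) = \vp_3$ and $\vp(\pm T) = 0$. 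The only point that genuinely deserves explicit verification is the arithmetic identification of the threshold above; everything else is a direct specialization of Proposition \ref{prop:10}, and no new analytical ingredient is required, which is why I expect the entire argument to be straightforward once the $c = 0$ reduction is made explicit.
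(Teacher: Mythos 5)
Your proposal is correct and matches the paper's approach exactly: the paper offers no separate proof of this proposition, treating it as the immediate $c=0$ specialization of Proposition \ref{prop:10}, which is precisely your reduction. Your explicit verification that the threshold $-\frac{64\sqrt{12}}{9}\om^{3/2}$ equals $R(z_0)$ at the local minimum $z_0=4\sqrt{\om/3}$ and agrees with the general formula at $c=0$ is accurate and is the only arithmetic the specialization requires.
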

Thus, we are interested in the solutions described in \eqref{2.52}. Linearizing around the traveling wave
$e^{i \om t} \phi$, $u= e^{i \om t} (\phi+v)$, yields the eigenvalue  problem for
$\vec{{\mathbf v}}=e^{\la t} (\Re v, \Im v)$,
\begin{equation}
\label{l:14}
\left(\begin{array}{cc}
0 & 1 \\
-1 & 0
\end{array}\right)\left(\begin{array}{cc}
\cl_+ & 0 \\
0 & \cl_-
\end{array}\right)\vec{{\mathbf v}}=\la \vec{{\mathbf v}}.
\end{equation}
\begin{theorem}
	\label{theo:20}
Let $\om>0$ and $\phi$ are the bell-shaped traveling wave of the quintic NLS,
described in \eqref{2.52}, which alternatively can be parametrized by \eqref{r:26} and
$$
g\in (0, \infty), \ \ka\in (0,1), \ \om=\f{\sqrt{1-k^2+k^4}}{4 g^2}.
$$
Then, these solutions are stable, whenever $\dpr{\cl_+^{-1} \phi}{\phi}<0$.
\end{theorem}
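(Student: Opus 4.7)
My plan is to apply the Kapitula-Kevrekidis-Sandstede (KKS) instability index count to the Hamiltonian problem \eqref{l:14}, taking advantage of the block-diagonal structure $\cl=\textup{diag}(\cl_+,\cl_-)$ that will make the stability matrix diagonal by parity. This is structurally the same as the proof of Theorem \ref{theo:10} specialized to $c=0$, but with dramatic simplifications. The first task is to read off the spectral data of $\cl_\pm$: from \eqref{p:10} one has $\cl_-\phi=0$, and since $\phi>0$ is nodeless it is the Floquet ground state, so $n(\cl_-)=0$ and $Ker(\cl_-)=\textup{span}\{\phi\}$. Differentiating \eqref{p:10} gives $\cl_+\phi'=0$, while $\cl_+\phi=-\tfrac{3}{4}\phi^{5}$ implies
\begin{equation*}
\dpr{\cl_+\phi}{\phi}=-\tfrac{3}{4}\int_{-T}^{T}\phi^{6}\,dx<0,
\end{equation*}
forcing $n(\cl_+)\geq 1$. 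A periodic Sturm oscillation count for the odd function $\phi'$ (which vanishes precisely at $0$ and $\pm T$) combined with the even/odd decomposition under $x\mapsto -x$ then pins down $n(\cl_+)=1$ and $Ker(\cl_+)=\textup{span}\{\phi'\}$.

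Once the scalar indices are in hand, I build the stability matrix. The kernel of $\cj\cl$ is spanned by $\phi_1=(\phi',0)^T$ and $\phi_2=(0,\phi)^T$, and since $\phi\perp\phi'$ by parity I can define generalized eigenvectors $\psi_i\in Ker(\cl)^\perp$ via $\cl\psi_i=\cj^{-1}\phi_i$, namely
\begin{equation*}
\psi_1=\bigl(0,\,\cl_-^{-1}\phi'\bigr)^T,\qquad \psi_2=\bigl(-\cl_+^{-1}\phi,\,0\bigr)^T.
\end{equation*}
The stability matrix $D_{ij}=\dpr{\cl^{-1}(\cj^{-1}\phi_i)}{\cj^{-1}\phi_j}$ has its off-diagonal entries killed by parity (even functions are orthogonal to odd ones), leaving the diagonal
\begin{equation*}
D=\begin{pmatrix}\dpr{\cl_-^{-1}\phi'}{\phi'} & 0 \\ 0 & \dpr{\cl_+^{-1}\phi}{\phi}\end{pmatrix}.
\end{equation*}
Strict positivity of $\cl_-$ on $\phi^\perp\ni\phi'$ makes the top-left entry strictly positive, so under the hypothesis $\dpr{\cl_+^{-1}\phi}{\phi}<0$ one has $n(D)=1$. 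The KKS identity
\begin{equation*}
k_r+2k_c+2k_i^{-}=n(\cl)-n(D)=\bigl(n(\cl_+)+n(\cl_-)\bigr)-n(D)=1-1=0
\end{equation*}
then forces $k_r=k_c=k_i^{-}=0$, which is spectral stability.

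The main obstacle is the sharp value $n(\cl_+)=1$ across the whole admissible parameter range. Periodic Sturm counting is ambiguous precisely at the edges of spectral gaps of the Hill operator, and a priori $\phi'$ could sit at such an edge, in which case one would have $n(\cl_+)=2$ and the index count would no longer certify stability. Handling this requires the topological/continuation argument advertised in the introduction: track $n(\cl_+)$ along a continuous path of parameters from a reference configuration where it can be computed explicitly (for example the small-amplitude limit where $\phi$ bifurcates from a constant), and use spectral continuity plus monitoring of potential eigenvalue crossings through $0$ to propagate $n(\cl_+)=1$ globally. Everything else in the argument is routine KKS machinery adapted to the block-diagonal $c=0$ case of the DNLS framework already set up in Section \ref{sec:dnls1.2}.
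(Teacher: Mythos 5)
Your proposal is correct and follows essentially the same route as the paper: the KKS index count \eqref{e:20} applied to the block-diagonal problem \eqref{l:14}, with $n(\cl_+)=1$, $n(\cl_-)=0$ and the kernel characterization imported from Proposition \ref{prop:23} (which the paper establishes exactly by the variational-plus-continuation argument you sketch for closing the $n(\cl_+)=1$ gap), yielding $k_{Ham}=n(\cl)-n(D)=0$ when $\dpr{\cl_+^{-1}\phi}{\phi}<0$. Your explicit treatment of the full $2\times 2$ diagonal matrix $D$, with the $\dpr{\cl_-^{-1}\phi'}{\phi'}>0$ entry handled by the positivity of $\cl_-$ on $\{\phi\}^{\perp}$, is in fact slightly more careful than the paper's shorthand that ``$D$ is one dimensional,'' but leads to the same conclusion.
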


The plan of the paper is as follows - the main object of investigation, namely the DNLS problem is considered in all sections, but the last one. More specifically,   in Section \ref{sec:2}, we introduce the basics of the instability index counting theory. In Section \ref{sec:2.2}, we construct small waves via a variational method. This is, on one hand standard, but we find it  useful in the sequel, as it provides an important piece of spectral information\footnote{which proved to be extremely non-trivial to obtain with the explicit waves under consideration}, namely that the scalar linearized operator $\cl_+$ has a single negative eigenvalue, for all points in the  parameter space. This property is then established for all linearized operators (about the waves of interest) via a topological arguments, as eigenvalues of $\cl_+$ are shown  not to cross the zero eigenvalue, see Proposition \ref{prop:23} later on. In Section \ref{sec:3}, we study the spectral properties of $\cl_\pm$ - first we need and present an alternative parametrization of the waves, see Section \ref{sec:3.1}, and then we describe the first few elements of $\si(\cl_\pm)$, see Proposition \ref{prop:23}. In Section \ref{sec:4}, we use the spectral  information from Section \ref{sec:3} to study the properties of the matrix Hill   operator $\cl$, which arises in the linearized problem. Namely, we show that its kernel is always two dimensional in Proposition \ref{le:10}. Note that this is the minimal dimension dictated by the N\"other's theorem, as the Hamiltonian system has two symmetries.    It is at this point that we start introducing some concrete calculations,
based on the formulas for the waves\footnote{Interestingly, we need to
	resort to differentiation with respect to parameters. This is always tricky, as the period generally depends on these parameters and one needs to appropriately prepare the problem by
	rescaling  to a fixed period, see Section \ref{sec:3.2} and Section \ref{sec:5.1} for specifics about these calculations}, see Section \ref{sec:4.2}.   In Section \ref{sec:5}, we wrap up the proof of the stability criteria for DNLS waves.

 In Section \ref{sec:6}, we study the stability of the quintic NLS waves. These turn out to be a two parameter subfamily of the three parameter family of DNLS waves considered earlier. One can compute the quantity $\dpr{\cl_+^{-1} \phi}{\phi}$, but in this case, the index counting theory stipulates  that the spectral stability is exactly equivalent to $\dpr{\cl_+^{-1} \phi}{\phi}<0$. We have an explicit, but long formula, which shows the intervals of stability for each given point in the parameter space - some graphs are given at the end of Section \ref{sec:6}, which illustrate where this is the case.

\section{Some preliminaries}
\label{sec:2}
{First, we introduce some notations. Let $S$ be a self-adjoint operator,  with domain $D(S)\subset L^2$, which is bounded from below, i.e. $\inf_{\|u\in D(S): \|u\|_{L^2}=1} \dpr{S u}{u}>-\infty$. Very often, such operator have only  (real) eigenvalues in their spectrum, each with finite multiplicity. For example, this is the case when $(S-\lambda I)^{-1}$ is a compact operator for some $\la: \la>>1$, which would be the main situation considered herein.  In such case, we denote their real eigenvalues $\la_0(S)\leq \la_1(S)<\ldots$. In particular, from the min-max characterizations, we have that
	$$
	\la_0(S)=\inf_{u\in D(S): \|u\|_{L^2}=1} \dpr{S u}{u}, \ \ \la_1(S)=\sup_{\zeta\neq 0} \inf_{u\in D(S)\cap \{\zeta\}^\perp: \|u\|_{L^2}=1} \dpr{S u}{u}
	$$
Next, we present some classical results about the instability index count theories. These allow us to count the number of unstable eigenvalues for eigenvalue problems of the form \eqref{170}, based  on the information about the self-adjoint portion $\cl$ and some specific quantities, which are also, in principle,  computable.
\subsection{Instability index theory}
\label{2.1}
We use the instability index count theory, as developed in \cite{Pel, KKS,KKS2, Kap}, see also \cite{LZ}. We present a corollary, which is enough  for  our purposes. For eigenvalue problem in the form
\begin{equation}
\label{1701}
\ci \ch \vec{{\mathbb U}}=\la \vec{{\mathbb U}}.
\end{equation}
we assume that $\ch=\ch^*$ has $dim(Ker(\ch)<\infty$, and also a finite number of negative eigenvalues, $n(\ch)$, a quantity sometimes referred to as Morse index of the operator $\ch$.   In addition, $\ci^*=-\ci$ and we shall require that $\ci^{-1}: Ker[\ch]\to Ker[\ch]^\perp$.
Let $k_r$  be  the number of positive eigenvalues of the spectral problem \eqref{1701} (i.e. the number of real instabilities or real modes), $k_c$ be the number of quartets of eigenvalues with non-zero real and imaginary parts, and $k_i^-$, the number of pairs of purely imaginary eigenvalues with negative Krein signature.  For a simple pair of imaginary eigenvalues $\pm i \mu, \mu\neq 0$, and the corresponding eigenvector
$\vec{z} = \left(\begin{array}{c}
z_1  \\ z_2
\end{array}\right) $, the Krein signature is
$
sgn(\dpr{ \ch  \vec{z}}{ \vec{z}}),
$
see \cite{KKS}, p. 267.

The matrix $D$ is introduced as follows  - for $Ker[\ch]=span \{\zeta_1, \ldots, \zeta_n\}$
\begin{equation}
\label{dij}
D_{i j}:=\dpr{\ch^{-1} [\ci^{-1} \zeta_i] }{\ci^{-1} \zeta_j}.
\end{equation}
Note that the last formula makes sense, since $ \cj^{-1} \zeta_i \in Ker[\ch]^\perp$. Thus $\ch^{-1}[\cj^{-1} \zeta_i]\in Ker[\ch]^\perp$ is well-defined. The index counting theorem, see Theorem 1, \cite{KKS2} states that if $det(D)\neq 0$, then
\begin{equation}
\label{e:20}
k_{Ham}:=k_r+2 k_c+2 k_i^-= n(\ch)-n(D).
\end{equation}
Note that $k_{Ham}=0$ guarantees spectral  stability\footnote{But note that this is not necessary. For example,  one might have $k_{Ham}=2=2k_i^-, k_r=k_i^-=0$, which means that no instabilities are present, but there is a pair of purely imaginary eigenvalues, with a negative Krein signature.}.   A particularly useful corollary of this result occurs when $n(\ch)=1$, since then the stability is equivalent to $k_{Ham}=0$, see \eqref{e:20}. Clearly,  the stability is equivalent to $n(D)=1$, whereas $n(D)=0$ leads to $k_{Ham}=1=k_r$, hence instability.

\subsection{Variational construction of small waves}
\label{sec:2.2}
This section constructs variational solution for the profile equation \eqref{30}. This may seem redundant, given the fact that we are able to construct, in a fairly explicit manner (i.e. with explicit dependence on the parameters), all solutions of interest to it. This is all so, but the variational construction yields an important additional property of these solutions that arise as constrained minimizer, which will be relevant later on. Namely, they will have the important property that $n(\cl_+)=1$, which turns out hard to verify in this context.
\begin{proposition}
	\label{prop:43}
	Let $T>0$, $c\in \rone$. Then, there exists $\eps_0=\eps_0(T,c) >0$, so that the variational problem
	\begin{equation}
	\label{v:10}
	\left\{\begin{array}{l}
J[v]=	\f{1}{2} \int_{-T}^T |v'(x)|^2 +\f{c}{8} \int_{-T}^T |v(x)|^{4} dx - \f{1}{32} \int_{-T}^T |v(x)|^{6} dx \to min \\
	\int_{-T}^T |v(x)|^2 dx=\eps
	\end{array}
	\right.
	\end{equation}
	has solution $V$ for every $0<\eps<\eps_0$. Moreover, it is a bell-shaped function, which satisfies the Euler-Lagrange equation
	\begin{equation}
	\label{EL:10}
	-V''+\zeta V+ \f{c}{2}V^{3}-\f{3}{16} V^{5} =0, -T<x<T,
	\end{equation}
	where $\zeta=\zeta(c,\eps, T)$ is the Euler-Lagrange multiplier.
	In addition, the linearized operator
	$$
	L_+:=-\p_{xx}+\zeta + \f{3 c}{2} V^{2} - \f{15}{16} V^{4}
	$$
	 has exactly one negative eigenvalue.
\end{proposition}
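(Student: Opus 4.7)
The plan is to split the proof into three stages: coercivity and existence of a minimizer, bell-shape, and the Morse index of $L_+$.

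For existence, I would first establish coercivity of $J$ on the constraint surface $\{\|v\|_{L^2}^2=\eps\}$ for small $\eps$. The torus embedding $\|v\|_{L^\infty}^2\lesssim \|v'\|_{L^2}\|v\|_{L^2}+\|v\|_{L^2}^2$, together with $\int|v|^6\leq\|v\|_{L^\infty}^4\|v\|_{L^2}^2$ and $\int|v|^4\leq\|v\|_{L^\infty}^2\|v\|_{L^2}^2$, gives under the constraint
\[
\tfrac{1}{32}\int|v|^6+\tfrac{|c|}{8}\int|v|^4\leq C(T,c)\bigl(\eps^2\|v'\|_{L^2}^2+\eps^{3/2}\|v'\|_{L^2}+\eps^2\bigr).
\]
For $\eps<\eps_0(T,c)$ small enough, the factor $C\eps^2$ is below $1/8$ and Young absorbs the linear $\|v'\|_{L^2}$ term, yielding $J[v]\geq \tfrac{1}{4}\|v'\|_{L^2}^2-C(\eps)$. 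A minimizing sequence is then bounded in $H^1_{per}[-T,T]$; the compact embedding $H^1_{per}\hookrightarrow L^p$, $p<\infty$, gives strong convergence in $L^4$ and $L^6$ along a subsequence, the constraint passes to the limit in $L^2$, and weak lower semicontinuity controls the kinetic term. This produces a minimizer $V$; a standard Lagrange multiplier argument then furnishes a real $\zeta=\zeta(c,\eps,T)$ for which $V$ solves \eqref{EL:10}.

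To produce a bell-shaped minimizer I would apply the symmetric decreasing rearrangement $V\mapsto V^*$ on $[-T,T]$: rearrangement preserves all $L^p$ norms while the Polya-Szego inequality gives $\|(V^*)'\|_{L^2}\leq \|V'\|_{L^2}$, so $J[V^*]\leq J[V]$ and $V^*$ still satisfies the constraint. Replacing $V$ by $V^*$ we obtain a minimizer that is even, positive, and non-increasing on $[0,T]$, i.e., bell-shaped in the sense of Definition \ref{defi:10}.

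For the Morse index, the second-order condition at a constrained minimum yields $\dpr{L_+ h}{h}\geq 0$ for every $h$ with $\dpr{h}{V}=0$; by the min-max principle this forces $\la_1(L_+)\geq 0$, so $n(L_+)\leq 1$. For the matching lower bound I would differentiate \eqref{EL:10} in $x$ to obtain $L_+V'=0$, so $V'\in Ker(L_+)$. Because $V$ is bell-shaped with a unique maximum at $x=0$, $V'$ is strictly positive on $(-T,0)$ and strictly negative on $(0,T)$, hence sign-changing. On the other hand $L_+=-\p_x^2+q(x)$ is a Schrodinger operator with smooth periodic potential, so by Perron-Frobenius (positivity of the heat semigroup $e^{-tL_+}$) its lowest eigenvalue is simple and admits a strictly positive eigenfunction; thus $V'$ cannot be a ground state, the eigenvalue $0$ is not the lowest, and $\la_0(L_+)<0$. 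Combined with $n(L_+)\leq 1$, this yields $n(L_+)=1$. The delicate point is precisely this lower bound: it couples the kernel element $V'$ coming from translation invariance with the sign-change property of the bell-shape (so the rearrangement step is essential, not cosmetic) and with the positivity of the ground state on the torus. The other ingredients (GN coercivity, direct method, Polya-Szego) are technically routine, but the threshold $\eps_0(T,c)$ must be chosen with care since the sextic term carries the wrong sign.
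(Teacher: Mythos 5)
Your proposal is correct and follows essentially the same route as the paper: Gagliardo--Nirenberg/Sobolev coercivity for small $\eps$, the direct method with the compact embedding $H^1_{per}\hookrightarrow L^p$, Szeg\H{o}/P\'olya--Szeg\H{o} rearrangement to get the bell shape, the second-order condition at the constrained minimum for $n(L_+)\le 1$, and the sign-changing kernel element $V'$ against the positive ground state for $n(L_+)\ge 1$. The only (immaterial) difference is that the paper restricts the minimizing sequence to bell-shaped functions from the outset so the limit is automatically bell-shaped, whereas you rearrange the minimizer after the fact.
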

 \begin{proof}
 	We first need to check that the variational problem \eqref{v:10} is well-posed. That is, for sufficiently small $\eps$ and under the constraint $\|v\|_{L^2}^2=\eps$, the functional $J$ is bounded from below. Indeed, from  the Gagliardo-Nirenberg-Sobolev (GNS)  inequality, we have
 	$$
 	\|v\|_{L^6[-T,T]}^6\leq C \|v\|_{\dot{H}^{\f{1}{3}}[-T,T]}^6\leq
 	C \|v'\|_{L^2}^2 \|v\|_{L^2}^4=C\eps^2 \|v'\|_{L^2}^2.
 	$$
 	Similarly, (with $c\neq 0$ as in the definition of $J(v)$, if $c=0$, just skip this step)
 	$$
 	\|v\|_{L^4[-T,T]}^4\leq
 	C \|v'\|_{L^2} \|v\|_{L^2}^3=C \|v'\|_{L^2} \eps^{\f{3}{2}} \leq \f{1}{4c} \|v'\|_{L^2}^2 + C \eps^3
 	$$
 	This allows one to estimate $J$ from below
 	\begin{equation}
 	\label{v:20}
 	J[v]\geq \left(\f{1}{4}-C\eps^2\right) \|v'\|^2 - C\eps^3>- C\eps^3
 	\end{equation}
 	provided $\eps: C\eps^2 \leq \f{1}{4}$. This shows the well-posedness.
 	
 	 Recall the Szeg\"o inequality $\|v_x\|_{L^2[-T,T]}\geq \|v^*_x\|_{L^2[-T,T]}$, where $v^*$ is the decreasing rearrangement of  $v$ as previously defined. Note that  the equality holds only when $v=v^*$, i.e. if $v$ is bell-shaped.  At the same time, for all $1\leq p\leq \infty$, there is $\|v\|_{L^p[-T,T]}=\|v^*\|_{L^p[-T,T]}$. This shows that $J[v]\geq J[v^*]$, while $\|v^*\|_{L^2}^2=\|v\|_{L^2}^2=\eps$. Thus, it suffices to restrict
 	 the variational problem \eqref{v:10} to bell-shaped entries only.
 	
 	 We now show that \eqref{v:10} has solutions. Let $J_*=\inf_{\|v\|^2=\eps} J[v]$ and pick a minimizing sequence of bell-shaped functions, $v_n: \|v_n\|^2=\eps,  J[v_n]\to J_*$. It follows from \eqref{v:20} that
 	 $$
 	 \limsup_n \|v_n\|_{H^1[-T,T]}<\f{C\eps^3 + J_*}{\f{1}{4}-C\eps^2}<\infty.
 	 $$
 	 Thus, $\{v_n\}_n $ is a bounded sequence in $H^1[-T,T]$, hence a precompact in $L^2[-T,T]$. Thus, we may extract a subsequence, which converges weakly in $H^1$ and strongly in $L^2$.  Without loss of generality, the subsequence  is $v_n$, say $\lim_n \|v_n-V\|_{L^2}=0$. By the GNS inequality and the $\sup_n \|v_n\|_{H^1}<\infty$, it follows that $\lim_n \|v_n-V\|_{L^p[-T,T]}=0, 1<p<\infty$. In particular, $v_n\to V$ in
 	 $L^4, L^6$. At the same time, by the lower semi-continuity of the $H^1$ norm, with respect to weak convergence,
 	 $\liminf_n \|v_n\|_{H^1}\geq \|V\|_{H^1}$. It follows that
 	 $$
 	 J_*=\liminf_n J[v_n]\geq J[V].
 	 $$
 	while $\|V\|_{L^2}^2=\eps$. Thus, $V$ is a solution of \eqref{v:10} and it is a bell-shaped as a limit of bell-shaped functions. It now remains to establish the Euler-Lagrange equation \eqref{EL:10} and $n(L_+)=1$. This is all very standard. Fix $h: \dpr{h}{V}=0$ and consider
 	$$
 	f(\de):= J\left[\sqrt{\eps} \f{V+\de h}{\|V+\de h\|^2} \right]
 	$$
 	Since $V$ is a minimizer of \eqref{v:10}, it follows that $f$ has a minimum at $\de=0$. Thus, $f'(0)=0$, which yields exactly \eqref{EL:10}. Furthermore, $f''(0)\geq 0$, which amounts to
 	$$
 	\dpr{L_+ h}{h}\geq 0, h\perp V
 	$$
 	Thus, $n(L_+)\leq 1$. On the other hand, by direct inspection, $L_+[V']=0$ and the function $V'$ has two zeros, at zero and at $T$.  Thus, this is not the ground state, which needs to be positive, hence there is a negative eigenvalue, whence $n(L_+)=1$.
 \end{proof}

\section{Spectral properties of $\cl_\pm$}
\label{sec:3}
We   need to establish some useful spectral  properties   for the scalar Schr\"odinger operators $\cl_\pm$, such as \eqref{d:12}.  In addition, we shall also need to compute various quantities involving $\cl_+^{-1} \phi$. This requires explicit calculations involving the waves, so we start with an alternative parametrization, which will be useful in the actual computations.

\subsection{An alternative  parametrization of the waves}
\label{sec:3.1}
As we shall see, it is possible to obtain  formulas in terms of the roots $\vp_1, \vp_2, \vp_3$. These are not necessarily  good variables to work with.
We introduce a new set  of parameters. Namely, we shall use $g, \ka$ and
$
\mu=16(\om-\f{c^2}{4}).
$
Based on that and the types of solutions that we consider, it is convenient to further distinguish between the cases, $\mu>0$ and $\mu<0$.
\subsubsection{The case $\mu>0$}
In terms of this variables, (see \eqref{r:24} for the formulas connecting the roots to $g, \ka$), we can express the roots as follows
\begin{eqnarray}
\label{r:10}
\vp_1\vp_2 &=& \f{1}{3}\left(-\f{4}{g^2}+\f{8\ka^2}{g^2}-\mu\right)=:-A(g,\ka,\mu)\\
\label{r:20}
\vp_2\vp_3 &=& \f{1}{3}\left(\f{8}{g^2}-\f{4\ka^2}{g^2}-\mu\right)=:B(g,\ka,\mu)\\
\label{r:30}
\vp_1\vp_3 &=& \f{1}{3}\left(-\f{4}{g^2}-\f{4\ka^2}{g^2}-\mu\right)=:-C(g,\ka,\mu).
\end{eqnarray}
Recall that we are interested in a case, where $\vp_1, \vp_2, \vp_3$ are all real and $\vp_1<0<\vp_2<\vp_3$. The assumption $\om-\f{c^2}{4}>0$ ensures $\vp_3>0$.  It is easy to see that the rest  is equivalent to $\vp_1\vp_2<0$ and $\vp_2\vp_3>0$. Indeed, $\vp_1\vp_2<0$ rules out complex eigenvalues (since then $\vp_1\vp_2=\vp_1\bar{\vp}_1>0$). Additionally, $\vp_2\vp_3>0$ rules out the possibility $\vp_1<\vp_2<0<\vp_3$. Thus, in the case under consideration, namely $\mu>0$,
$$
\vp_1<0<\vp_2<\vp_3 \Longleftrightarrow -A=\vp_1\vp_2<0 \ \&\  B=\vp_2\vp_3>0.
$$
Working out these inequalities leads to the following  conditions on the new parameters
\begin{equation}
\label{r:32}
\mu>0\ \ \&\ \ 0<g^2< \f{8}{\mu}\ \ \&\ \ 0<\ka^2<\min\left(\f{4+\mu g^2}{8},
\f{8-\mu g^2}{4}\right).
\end{equation}
Note that $\min\left(\f{4+\mu g^2}{8},
\f{8-\mu g^2}{4}\right)\leq 1$, so the standard restriction for $\ka\in (0,1)$ is not violated. In fact, for the case $\mu>0$,
\begin{equation}
\label{r:36}
 \vp_1<0<\vp_2<\vp_3  \Longleftrightarrow   0<g< \sqrt{\f{8}{\mu}}\ \ \&\ \ 0<\ka^2<\min\left(\f{4+\mu g^2}{8},
\f{8-\mu g^2}{4}\right).
\end{equation}
For future reference, we need the formula for $\vp_1, \vp_2, \vp_3, c$ in terms of the new variables. We have from Viet's formulas and \eqref{r:10}, \eqref{r:20}, \eqref{r:30},
\begin{eqnarray}
\label{r:50}
\vp_1 &=&-\sqrt{\f{AC}{B}}, \vp_2=\sqrt{\f{AB}{C}}, \vp_3=\sqrt{\f{BC}{A}}, \\
\label{r:51}
c &=& \f{AB+BC-AC}{4 \sqrt{ABC}}
\end{eqnarray}

\subsubsection{The case $\mu\leq 0$}
This case, $\mu \leq 0$  is very similar to the case $\mu>0$ - all the formulas stay unchanged, while the regions of validity, such as \eqref{r:32} change. More specifically, \eqref{r:10}, \eqref{r:20}, \eqref{r:30} remain unchanged, but now, we have to find new constraints corresponding that $\vp_1, \vp_2, \vp_3$ are real and $\vp_1<0<\vp_2<\vp_3$. So, we need to
enforce $A>0, B>0, C>0$. Note that $B>0$ is automatic, due to the inequalities $\ka<1$ and $\mu\leq 0$. Also, note that since $A<C$, it is enough to enforce $A>0$. This gives rise to the new constraints, similar to \eqref{r:36}, namely - for $\mu<0$,
\begin{equation}
\label{r:361}
 \vp_1<0<\vp_2<\vp_3  \Longleftrightarrow   \ 0<g<\sqrt{-\f{4}{\mu}}  \ \&\ \ 0<\ka^2<\f{4+\mu g^2}{8}.
\end{equation}
The case $\mu=0$ can be naturally considered as part of \eqref{r:361}, so we get
$$
\vp_1<0<\vp_2<\vp_3  \Longleftrightarrow   \ 0<g<+\infty  \ \&\ \ 0<\ka^2<\f{1}{2}.
$$
Combining the results from the cases $\mu>0$, $\mu\leq 0$, we can formulate the new parametrization in the following proposition.
\begin{proposition}
	\label{prop:50}
	Let
	\begin{equation}
	\label{382}
	\mu\in \rone \ \ \& \ \ 0<g<\sqrt{\max\left(\f{8}{\mu}, -\f{4}{\mu}\right)}\ \ \& \ \
	\ka^2 \leq \min\left(\f{4+\mu g^2}{8},
	\f{8-\mu g^2}{4}\right).
	\end{equation}
	Then, the formulas \eqref{r:50} and \eqref{2.5}  describe
	all non-vanishing bell-shaped solutions, with
	$$
	\vp(0)=\vp_3>\vp(T)=\vp(-T)=\vp_2>0,
	$$
	constructed in Proposition \ref{prop:10}.
\end{proposition}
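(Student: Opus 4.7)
The plan is to verify that the map $(a,c,\omega) \mapsto (g,\kappa,\mu)$ defined by \eqref{r:24} together with $\mu = 16(\omega - c^2/4)$ is a bijection from the parameter set of Proposition \ref{prop:10} (restricted to those subcases producing three real roots $\varphi_1 < 0 < \varphi_2 < \varphi_3$) onto the region \eqref{382}. Once this is established, the proposition follows immediately: all analytic content (existence, uniqueness, bell-shapedness, explicit formula \eqref{2.5}, boundary values $\varphi(0)=\varphi_3$, $\varphi(\pm T)=\varphi_2$) has already been supplied by Proposition \ref{prop:10}, so only bookkeeping of the parameter regions remains.

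First I would derive the identities \eqref{r:10}--\eqref{r:30} directly from Viet's formulas \eqref{63} combined with the definitions of $g$ and $\kappa$ in \eqref{r:24}. This is a short algebraic manipulation expressing each symmetric product $\varphi_i \varphi_j$ as a polynomial in $(g^{-2}, \kappa^2, \mu)$. The formulas \eqref{r:50} and \eqref{r:51} for the individual roots and for $c$ then follow by solving the system, with the signs of the square roots forced by the requirement $\varphi_1 < 0 < \varphi_2 < \varphi_3$. This provides the inverse change of variables in closed form and shows that the parametrization is one-to-one and onto its image.

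Next I would identify that image as precisely the region \eqref{382}. The positivity of $A, B, C$ is equivalent to $\varphi_1 < 0 < \varphi_2 < \varphi_3$, as explained in the paragraphs preceding the proposition. A case split on $\textup{sgn}(\mu)$ is the cleanest way to unpack these three inequalities: for $\mu > 0$ the binding constraints are $A>0$ and $B>0$, which together give \eqref{r:36}; for $\mu \leq 0$ the condition $B > 0$ is automatic (since $\kappa^2 < 1$ and $\mu \leq 0$) while $A > 0$ implies $C > 0$ (using $A < C$ when $\mu \leq 0$), yielding \eqref{r:361}. The union of these two cases coincides with \eqref{382}. A brief computation also shows that $\kappa^2 \leq 1$ is automatic throughout this region (the maximum of $\min((4+\mu g^2)/8,(8-\mu g^2)/4)$ over admissible $g$ is attained at $\mu g^2 = 4$ and equals $1$), so the Jacobi function $sn(\cdot,\kappa)$ in \eqref{2.5} remains a legitimate elliptic function. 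The only real obstacle is combinatorial: ensuring that the four subcases in Proposition \ref{prop:10} collapse cleanly into the single unified description \eqref{382} with no admissible parameter choice lost or double-counted.
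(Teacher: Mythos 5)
Your proposal follows essentially the same route as the paper: the paper's argument for Proposition \ref{prop:50} is precisely the content of its two subcases $\mu>0$ and $\mu\leq 0$, where the ordering $\vp_1<0<\vp_2<\vp_3$ is translated into the sign conditions on $A,B,C$ (with $B>0$ automatic and $A<C$ for $\mu\leq 0$), yielding \eqref{r:36} and \eqref{r:361}, whose union is \eqref{382}. Your additional remarks — inverting via \eqref{r:50}--\eqref{r:51} and checking $\ka^2\leq 1$ is not violated — are also present in the paper, so the two arguments coincide.
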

Now that we have the alternative description of the waves, it is time to establish some further structural facts about the first few eigenvalues in the spectrums of $\cl_\pm$.
\subsection{Description of the spectrum of $\cl_\pm$}
\label{sec:3.2}
\begin{proposition}
	\label{prop:23}
	For all bell-shaped  waves constructed in Proposition \ref{prop:10},
	the scalar linearized Schr\"odinger operators $\cl_\pm$, with $D(\cl_\pm)=H^2_{per}(-T,T)$ have the properties
	\begin{enumerate}
		\item $\cl_-\geq 0$, $\la_0(\cl_-)=0$, with $Ker[\cl_-]=span[\phi]$, $\la_1(\cl_-)>0$.
		\item $\cl_+$ has exactly one simple negative eigenvalue $\la_0(\cl_+)<0$ (say with a ground state $\chi_0$),  it has a simple eigenvalue at zero, $\la_1(\cl_+)=0$, with
		$Ker[\cl_+]=span[\phi']$, and $\la_2(\cl_+)>0$. In particular, there exists $\de>0$, so that
		\begin{equation}
		\label{86}
		\inf_{u\perp \chi_0, u\perp \phi'} \dpr{\cl_+ u}{u}\geq \de \|u\|_{L^2}^2.
		\end{equation}
	\end{enumerate}
\end{proposition}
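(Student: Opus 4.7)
For part (1), the plan is direct. I would divide the profile equation \eqref{30} by $\phi>0$ to obtain exactly $\cl_-\phi=0$, so that $\phi\in Ker(\cl_-)$. Because $\phi$ is a strictly positive eigenfunction of a Schr\"odinger operator on the torus, the Krein-Rutman/Perron-Frobenius theorem pins it as the simple ground state, yielding $\la_0(\cl_-)=0$, $Ker(\cl_-)=\mathrm{span}\{\phi\}$, and hence $\la_1(\cl_-)>0$.

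For part (2), I would first differentiate \eqref{30} in $y$ to obtain $\cl_+\phi'=0$, so $\phi'\in Ker(\cl_+)$. Since $\phi$ is bell-shaped with a strict maximum at $y=0$, the function $\phi'$ changes sign on $[-T,T)$ (positive on $(-T,0)$, negative on $(0,T)$, vanishing only at $0$ and at $\pm T$), so it cannot be a ground state; this gives $\la_0(\cl_+)<0$ and $n(\cl_+)\geq 1$. The heart of the proof is to show that in fact $n(\cl_+)=1$ and $Ker(\cl_+)$ remains one-dimensional throughout the admissible parameter region.

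The plan is to anchor the count at a single parameter point via the variational construction and then spread it to the whole region by continuity. By Proposition \ref{prop:43}, for sufficiently small $\eps$ there is a bell-shaped minimizer $V$ of \eqref{v:10} satisfying the Euler-Lagrange equation \eqref{EL:10}, which is exactly \eqref{30}, and for which the linearized operator $L_+$ has $n(L_+)=1$. By the uniqueness statement in Proposition \ref{prop:10}, this $V$ is a member of the explicit family \eqref{2.5}, hence at some parameter point $P_0$ of the region described in Proposition \ref{prop:50} we have $n(\cl_+)=1$ and $Ker(\cl_+)=\mathrm{span}\{\phi'\}$. Since that region is path-connected and the eigenvalues of $\cl_+$ depend continuously on the parameters, $n(\cl_+)$ can change only if a second eigenvalue crosses zero, i.e.\ at some parameter where $\dim Ker(\cl_+)\geq 2$.

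The main obstacle is ruling out such a crossing. The plan is to combine reflection symmetry with Sturm oscillation theory for periodic Hill operators. Because $\phi$ is even, the potentials of $\cl_\pm$ are even, so eigenfunctions can be chosen with definite parity, and $\phi'$ is odd; any additional zero eigenfunction would then have to be an even periodic solution of $\cl_+\psi=0$. Moreover, since $\phi'$ has exactly two zeros in $[-T,T)$ (at $-T$ and $0$), Sturm oscillation for periodic Hill operators forces $0\in\{\la_1^+,\la_2^+\}$ of the periodic spectrum. Anchoring at $P_0$, where $0=\la_1^+<\la_2^+$ is simple, and propagating via continuity plus the parity obstruction (no even periodic kernel element can appear along a path in parameter space), one gets that $0=\la_1^+$ remains simple and $\la_2^+>0$ throughout. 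This yields $n(\cl_+)\equiv 1$ and $Ker(\cl_+)=\mathrm{span}\{\phi'\}$, and the spectral gap \eqref{86} then follows at once from the strict positivity of $\la_2(\cl_+)$ combined with compactness of the resolvent.
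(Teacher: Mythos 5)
Your treatment of $\cl_-$ and your overall strategy for $\cl_+$ (anchor $n(\cl_+)=1$ at one parameter point via the constrained minimizer of Proposition \ref{prop:43}, then propagate by continuity over the connected parameter region, so that the only way the Morse index can change is through a kernel crossing) coincide with the paper's. The gap is in the step you call the ``parity obstruction.'' Sturm theory and evenness of the potential tell you only that a hypothetical second zero-eigenfunction $\psi$ would be even with exactly two zeros at $\pm x_0$; they do \emph{not} prevent such a $\psi$ from existing. For a Hill operator with even potential the two periodic eigenvalues associated with eigenfunctions having two zeros per period consist generically of one odd eigenfunction (here $\phi'$, pinned at $0$) and one even eigenfunction whose eigenvalue is free to move and, a priori, to cross zero as the three parameters vary. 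Asserting that ``no even periodic kernel element can appear along a path in parameter space'' is exactly the claim that needs proof, and parity supplies no mechanism for it.

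The paper closes this gap with a substantive argument: it differentiates the (rescaled, fixed-period) profile equation with respect to the parameters $\mu$ and $\ka$ to produce the identities \eqref{w:5}, \eqref{j:10}, \eqref{j:16}, which — under the solvability condition \eqref{j:21}, verified symbolically — show that $Q$ and $Q^3$ lie in $Ran(\tilde{\cl}_+)\subset Ker[\tilde{\cl}_+]^\perp$. It then forms $\zeta(x)=Q(x)\bigl(Q^2(x)-Q^2(x_0)\bigr)$, which is orthogonal to the kernel yet has the same sign pattern as the putative even eigenfunction $\psi$, so $\dpr{\zeta}{\psi}>0$ gives a contradiction. Note also that the rescaling to a fixed period $[-1,1]$ is not cosmetic: the period of $\phi$ depends on the parameters, so the continuity-of-eigenvalues argument and the parameter differentiation must be carried out for the renormalized operator $\tilde{\cl}_+$. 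Your proposal would need to supply some substitute for this entire step; without it, the propagation argument does not go through.
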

\begin{proof}
	The statement for $\cl_-$ is straightforward. 	Indeed, by a direct check $\cl_-\phi=0$, whence $0$ is an eigenvalue.  Since $\phi$ does not change sign, it means that $0$ is a simple eigenvalue at the bottom of $\si(\cl_-)$. Hence, $\la_0(\cl_-)=0<\la_1(\cl_-)$ and $\cl_-|_{span[\phi]^\perp}\geq \de>0$.
	
	We now turn our attention to the spectral properties of $\cl_+$. One issue complicating matters is the dependence of the period on the variables $g, \ka$, which makes differentiation with respect to them problematic. In order to avoid this dependence, we introduce a scaling transformation. Namely, a new function $Q: \phi(\xi)=Q\left(\f{\xi}{T}\right)$ is introduced, which is $2$ periodic.
	Then, the new equation that we need to consider is
	\begin{equation}
	\label{d:10}
	-Q''+\f{T^2 \mu}{16} Q+\f{c T^2}{2} Q^3 - \f{3 T^2}{16} Q^5=0, -1<\eta<1.
	\end{equation}
	Then, the new linearized operator relevant to this problem is
	$$
	\tilde{\cl}_+:=-\p_{\eta\eta}+\f{T^2 \mu}{16}  + \f{3 cT^2}{2} Q^2 - \f{15 T^2}{16} Q^4,
	$$
	with $D(\tilde{\cl}_+)=H^2_{per.}[-1,1]$. One can also  see that $\tilde{\cl}_+[Q']=0$ by differentiating \eqref{d:10}.
	
	It is clear now that the results we want to establish are equivalent to
	\begin{equation}
	\label{d:12}
	\la_0(\tilde{\cl}_+)<\la_1(\tilde{\cl}_+)=0<\la_2(\tilde{\cl}_+); \ \ Ker[\tilde{\cl}_+]=span[Q'].
	\end{equation}
	So, our goal is to show \eqref{d:12}. Since, $\tilde{\cl}_+[Q']=0$, $Q'$, zero is an eigenvalue and $Q'$ is an eigenfunction.  It is also  clear that $\la_0(\tilde{\cl}_+)<0$, since $Q'$ is an eigenfunction at zero and it changes sign. Thus, one conclude that the ground state eigenvalue is negative.
	
	Our plan for the rest of the proof is as follows - we need to show that
	\begin{enumerate}
		\item $Ker[\tilde{\cl}_+]=span[Q']$ for all values of the parameters $(g,\ka,\mu)$ described in \eqref{382}.
		\item $n(\tilde{\cl}_+(\mu_0, g_0, \ka_0) =1$ for some value $(\mu_0, g_0, \ka_0)$ in the parameter space.
	\end{enumerate}
	We claim that this will be enough to establish \eqref{d:12}.  Note  first, that the set
	$$
	\ca:=\left\{\mu\in \rone \ \ \& \ \ 0<g<\sqrt{\max\left(\f{8}{\mu}, -\f{4}{\mu}\right)}\ \ \& \ \
	\ka^2 \leq \min\left(\f{4+\mu g^2}{8},
	\f{8-\mu g^2}{4}\right)\right\}.
	$$
	is an open and connected set in $\rthree$, and  the maps
	$(\mu, g, \ka)\to \la_j(\tilde{\cl}_+(\mu, g, \ka))$,
	$j=0,1,2, \ldots$ are continuous in $\mu, g, \ka$.  Since
	\begin{equation}
	\label{s:08}
	\la_0(\tilde{\cl}_+(\mu_0, g_0, \ka_0))< \la_1(\tilde{\cl}_+(\mu_0, g_0, \ka_0))=0< \la_2(\tilde{\cl}_+(\mu_0, g_0, \ka_0)),
	\end{equation}
	such inequality must persist for all $(\mu, g, \ka)\in \ca$. Indeed,  assume for a contradiction that for some other value $(\mu_1, g_1, \ka_1)\in \ca$,
	$$
	\la_0(\tilde{\cl}_+(\mu_1, g_1, \ka_1))<  \la_1(\tilde{\cl}_+(\mu_1, g_1, \ka_1)) <  \la_2(\tilde{\cl}_+(\mu_1, g_1, \ka_1))=0.
	$$
	Arguing by continuity, an eigenvalue crosses from being positive to being negative, implying that in some intermediate point, there  is a multiplicity two  eigenvalue at
	zero, which is a contradiction with $Ker[\tilde{\cl}_+]=span[Q']$ for all values of the parameters $(g,\ka,\mu)$.
	
	\subsubsection{Proof of $Ker[\tilde{\cl}_+]=span[Q']$}
	\label{sec:3.2.1}
	Given what we have established already,
	the only remaining fact that we need to establish is that there is no eigenfunction  $\psi\notin span[\chi_0, Q']$, corresponding to zero   eigenvalue. Assuming that such an eigenfunction  does exist (and without loss of generality orthogonal to $\chi_0, Q'$), we will reach a contradiction. First, by Sturm oscillation theory, $\psi$ should have two zeros in $[-1,1)$  Since it is orthogonal to $Q'$, the function $\psi$ must be even, with zeros at $\pm x_0: 0<x_0<1$. Without loss of generality $\psi(x)>0: x\in (-x_0, x_0)$, while $\psi(x)<0, x\in (-1, x_0)\cup (x_0, 1)$.
	
	Our approach is as follows. We construct elements in $Ker(\tilde{\cl}_+)^\perp$ and then we use them to contradict the existence of such $\psi$.  To that end,
	a relation that is immediately useful is
	\begin{equation}
	\label{w:5}
	\tilde{\cl}_+[Q]= -T^2[ -c Q^3 + \f{3}{4} Q^5].
	\end{equation}
	Another one is to take a derivative with respect to $\mu$ in \eqref{d:10}. Recall, see \eqref{61},  that $T=2g K(\ka)$, so it is independent on $\mu$. We obtain
	\begin{equation}
	\label{j:10}
	\tilde{\cl}_+[\p_\mu Q]=-T^2[\f{1}{16} Q+ \f{\p_\mu c}{2} Q^3].
	\end{equation}
	Finally, we take a derivative with respect to $\ka$. We get
	\begin{eqnarray}
	\label{j:16}
	\tilde{\cl}_+[\p_\ka Q] &=&  2T T_\ka[-\f{\mu}{16} Q-\f{c}{2} Q^3+\f{3}{16} Q^5]-T^2\f{c_\ka}{2} Q^3=\\
	\nonumber
	&=& -T^2\left[\f{\mu K'(\ka)}{8 K(\ka)} Q+(\f{c K'(\ka)}{K(\ka)}+\f{c_\ka}{2}) Q^3-\f{3 K'(\ka)}{8 K(\ka) }Q^5\right].
	\end{eqnarray}
	Formulas \eqref{w:5}, \eqref{j:10}, \eqref{j:16} allow us to solve for   $Q, Q^3$, provided
	\begin{equation}
	\label{j:21}
	c+c_\ka \f{K(\ka)}{K'(\ka)} - 2\mu c_\mu\neq 0.
	\end{equation}
	More precisely, isolating $Q,Q^3$, we obtain the system
	\begin{equation}
	\label{j:19}
	\left(\begin{array}{cc}
	\f{\mu}{4} & c+\f{c_\ka K(\ka)}{K'(\ka)} \\
	\f{1}{16} & \f{c_\mu}{2}
	\end{array}\right) \left(\begin{array}{c}
	Q \\ Q^3
	\end{array}\right) = -T^{-2} \left(\begin{array}{c}
	\tilde{\cl}_+(Q+ 2\f{K(\ka)}{K'(\ka)} Q_\ka \\ \tilde{\cl}_+(Q_\mu)
	\end{array}\right)
	\end{equation}
	We obtain
	\begin{equation}
	\label{j:22}
	\tilde{\cl}_+^{-1}(Q)= 16 T^{-2} \f{\f{c_\mu}{2} Q+c_\mu \f{K(\ka)}{K'(\ka)} Q_\ka-\left(c+c_\ka \f{K(\ka)}{K'(\ka)}\right)Q_\mu}{c+c_\ka \f{K(\ka)}{K'(\ka)} - 2\mu c_\mu}
	\end{equation}
	and there is a similar formula for $\tilde{\cl}_+^{-1}(Q^3)$, with the same denominator. Clearly, \eqref{j:21} is then a  solvability condition that ensures that $Q,Q^3\in Ran(\tilde{\cl}_+)\subset Ker[\tilde{\cl}_+]^\perp$. We have computed and plotted the function in \eqref{j:21}, see Figure \ref{fig:pic1}, Figure \ref{fig:pic2} that confirm the solvability condition \eqref{j:21}.
	\begin{figure}
		\centering
		\includegraphics[width=0.7\linewidth]{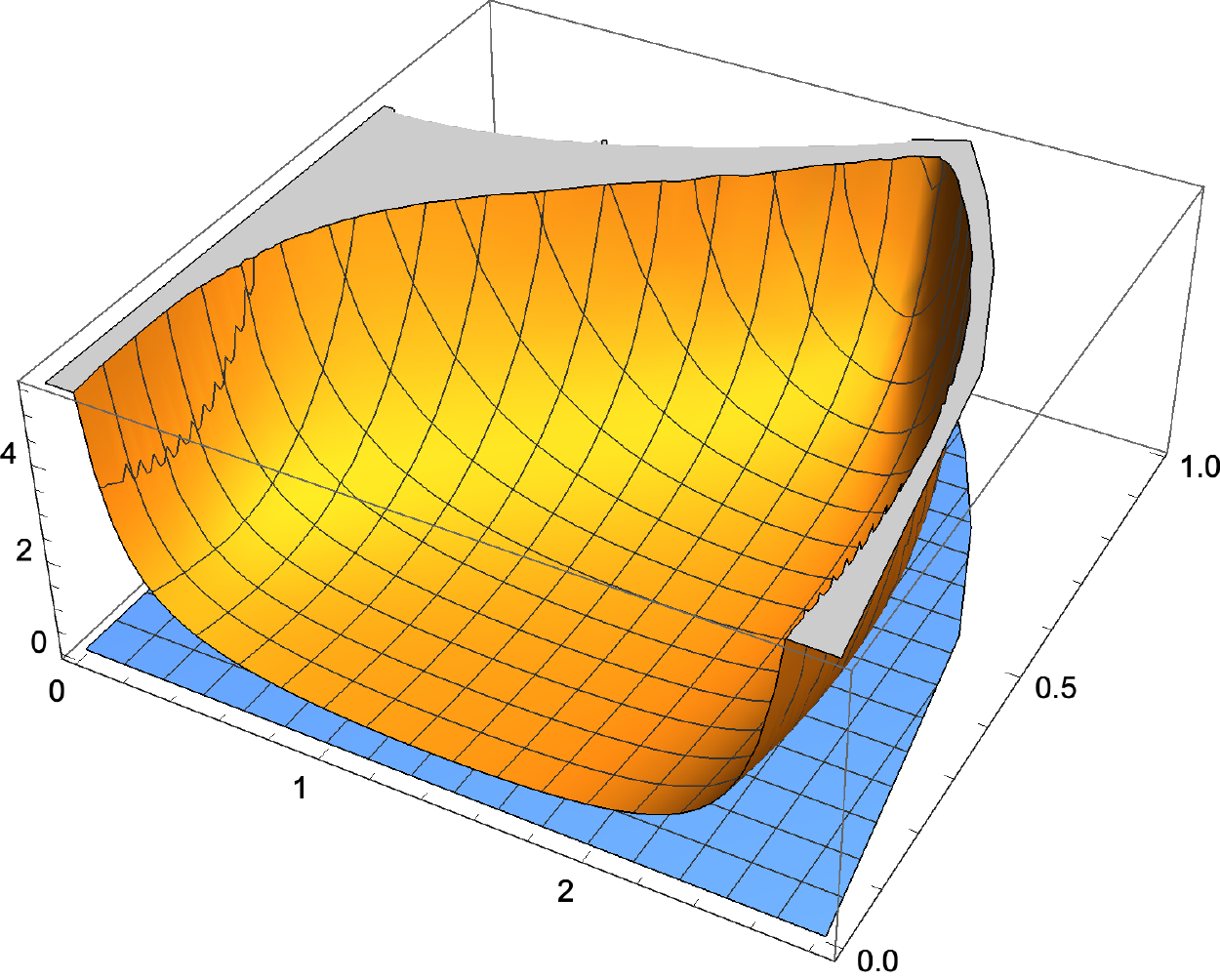}
		\caption{Graph of 	$c(g, \ka, \mu)+c_\ka(g, \ka, \mu) \f{K(\ka)}{K'(\ka)} - 2\mu c_\mu(g, \ka, \mu)$ , for $\mu=1$}
		\label{fig:pic1}
	\end{figure}
	\begin{figure}
		\centering
		\includegraphics[width=0.7\linewidth]{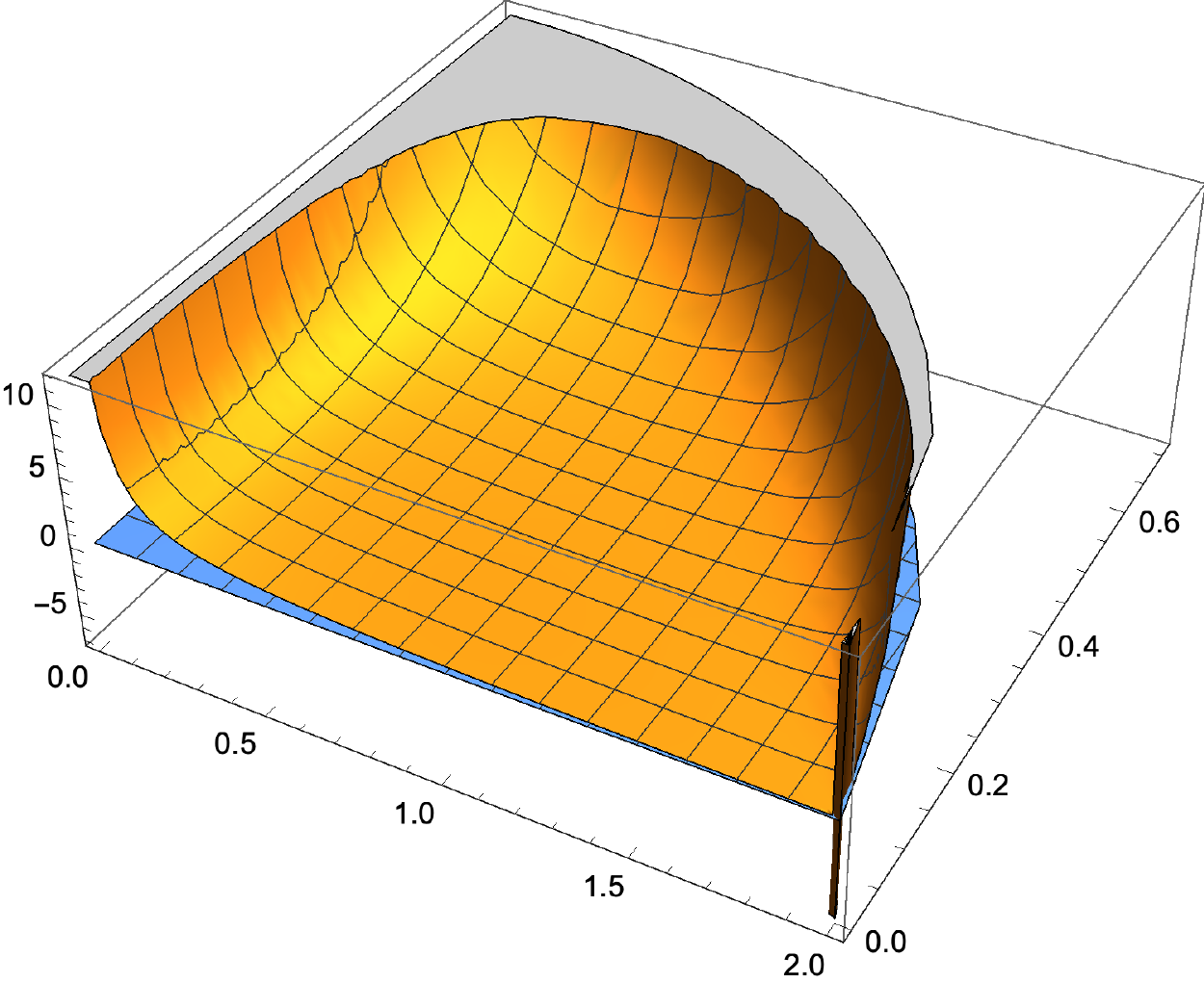}
		\caption{Graph of 	$c(g, \ka, \mu)+c_\ka(g, \ka, \mu) \f{K(\ka)}{K'(\ka)} - 2\mu c_\mu(g, \ka, \mu)$ , for $\mu=-1$}
		\label{fig:pic2}
	\end{figure}
	
	Recall that both $Q,Q^3\subset Ker[\tilde{\cl}_+]^\perp$ are bell-shaped, hence the function
	$$
	\zeta(x):=Q(x)(Q^2(x)-Q^2(x_0))\perp Ker[\tilde{\cl}_+].
	$$
	satisfies  $\zeta(x)>0, x\in (-x_0, x_0)$, $\zeta(x)<0, x\in (-1, -x_0)\cup (x_0, 1)$. Thus, $\dpr{\zeta}{\psi}>0$, while $\zeta\perp Ker[\tilde{\cl}_+]$. A contradiction is reached.
	\subsubsection{Proof of $n(\tilde{\cl}_+)=1$}
	In this section, we show the remaining claim in \eqref{d:12}, namely that $\tilde{\cl}_+$ has exactly one negative eigenvalue. Note that it suffices to prove $n(\cl_+)=1$, as the operators $\cl_+, \tilde{\cl}_+$ have the same Morse index.
	
	To that end, note that Proposition \ref{prop:43} provides bell-shaped solutions for the profile equation \eqref{30}, with the property $n(\cl_+)=1$. We claim that at least one of the constrained minimizers in Proposition \ref{prop:43}  is actually in the form of one of the solutions in Proposition \ref{prop:43}. Going back to the full description of all possible  bell-shaped  solutions of \eqref{30}, in Proposition \ref{prop:10}, we see that the only other bell-shaped  solutions are in the form $\vp(0)=\vp_3, \vp(-T)=\vp(T)=0$ and $\vp_2=\bar{\vp}_1, \vp_3>0$.
	
	Now, fix  $c<0$.   Also, select and fix sufficiently large half-period $T>\f{1}{2|c|}\int_0^1 \f{dx}{\sqrt{x(1-x)}} dx$ and sufficiently small $L^2$ norm $\eps=\|v\|_{L^2}<<1$. Then, Proposition \ref{prop:43} guarantees a bell-shaped  solution $V$, which in particular satisfies the Euler-Lagrange equation \eqref{EL:10}. {\it We claim that $V$ is not of the form $V(0)=\vp_3, V(T)=V(-T)=0$. Once this is proven, we are done, since $V$ is then necessarily in the form of Proposition \ref{prop:50} and moreover, the corresponding linearized operator $L_+$ has exactly one negative eigenvalue.}
	
	Assume for a contradiction, that $V$ is of the form $V(0)=\vp_3, V(-T)=V(T)=0$.
	The difficulty with generating the solutions as constrained minimizers, as we just did is that we have no good control of the Lagrange multiplier $\zeta$, nor of the integration constant $a$. Instead, we have the parameters $T$ and $\|V\|_{L^2[-T,T]}^2$ to work with. Let us write the relations for the roots that we know. By the Viet's formulas we have for $\vp_1, \vp_2=\bar{\vp}_1$,
	$$
	\vp_1+\vp_2=4c - \vp_3
	$$
	Thus, we may compute
	\begin{eqnarray}
	\label{k:10}
	T &=& \int_0^T d\vp= \int_0^{\vp_3} \f{1}{\sqrt{\vp(\vp_3-\vp)
			(\vp-\vp_1)(\vp-\bar{\vp}_1)}} d\vp.
	\end{eqnarray}
	But   since the roots
	$\vp_1, \vp_2$ are complex-conjugate  and $\vp\geq 0$
	$$
	(\vp-\vp_1)(\vp-\bar{\vp}_1)\geq (\vp-\f{\vp_1+\bar{\vp}_1}{2})^2= (\vp-\f{4c-\vp_3}{2})^2\geq 4c^2.
	$$
	It follows that
	$$
	T\leq \f{1}{2|c|} \int_0^{\vp_3}  \f{1}{\sqrt{\vp(\vp_3-\vp)}} d\vp=\f{1}{2c} \int_0^1 \f{1}{\sqrt{x(1-x)}} dx.
	$$
	This is a contradiction with the choice of $T\geq \f{1}{2c} \int_0^1 \f{1}{\sqrt{x(1-x)}} dx. $.  It follows that $V$ is of the form of Proposition \ref{prop:50} and $n(\cl_+)=1$.
\end{proof}

\section{Spectral properties of $\cl$}
\label{sec:4}
In this section, we tackle the spectral properties of the self-adjoint  operator $\cl$. This operator, due to its matrix structure is naturally harder to analyze than its scalar counterparts, $\cl_+, \cl_-$. It turns out that it is possible to extract all the necessary spectral information, from the property
\begin{equation}
\label{assump}
\la_0(\cl_+)<\la_1(\cl_+)=0<\la_2(\cl_+); \ \ Ker(\cl_+)=span[\phi'],
\end{equation}
 which was established in Proposition \ref{prop:23}.

For $U=(u_1, u_2)$, where $u_1$ and $u_2$ are periodic functions with fundamental period $2 T$, we have
\begin{equation}
\label{60}
\langle \cl U, U\rangle = \langle \cl_1u_1,u_1\rangle +\langle M u_2,u_1\rangle+
\langle M^* u_1,u_2\rangle+\langle \cl_2u_2,u_2\rangle
\end{equation}
Let $u_2=\phi \tilde{u}_2$. After integrating by parts, we obtain
\begin{eqnarray*}
	\langle \cl_2u_2, u_2\rangle &=& \langle \phi \p_y \tilde{u}_2,
	\phi \p_y \tilde{u}_2 \rangle \\
	\langle M u_2, u_1\rangle + \langle M^* u_1, u_2\rangle &=& \langle \phi^{3}\p_y \tilde{u}_2, u_1\rangle
\end{eqnarray*}
Note that $\cl_1=\cl_+ +\f{1}{4}\phi^{4}$. Hence,
\begin{equation}
\label{70}
\langle \cl U, U\rangle = \langle \cl_+u_1,u_1\rangle +
\int_{-T}^{T}{\left[ \f{1}{2}\phi^{2}u_1+\phi (\phi^{-1}u_2)_y\right]^2}dy .
\end{equation}
 This particular nice structure of the bilinear form $\dpr{\cl U}{U}$, is not present, if one considers the linearized operator in the form \eqref{169a}. More specifically, there is an extra, sign-indefinite term in \eqref{70}, namely $const. \langle \phi^{3}\p_y \tilde{u}_2, u_1\rangle$ which complicates the analysis of $\cl$.   This is why we needed to convert to the form \eqref{169}.
Concerning the kernel  of $\cl$, we have the following result.
\subsection{Description of $Ker(\cl)$}
\label{sec:4.1}
\begin{proposition}
	\label{le:10}
	The operator $\cl$ has at most one negative eigenvalue, i.e. $n(\cl)\leq 1$.
	Next, \\ $dim(Ker(\cl))=2$. In fact,
	$$
	Ker(\cl)=span\{\Psi_1, \Psi_2\}, \Psi_1=\left(\begin{array}{c} 0 \\ \phi\end{array}\right),
	\Psi_2 = \left(\begin{array}{c} \phi' \\  -\f{1}{4} \phi^{3} \end{array}\right).
	$$
\end{proposition}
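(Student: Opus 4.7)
The plan is to combine the quadratic-form identity \eqref{70} with the spectral data on $\cl_\pm$ from Proposition~\ref{prop:23}. First, for the bound $n(\cl)\le 1$: the identity \eqref{70} expresses $\dpr{\cl U}{U}$ as $\dpr{\cl_+ u_1}{u_1}$ plus a nonnegative integral, so $\dpr{\cl U}{U}\ge 0$ whenever $u_1=0$. Consequently, the projection $\pi_1:U=(u_1,u_2)\mapsto u_1$ will be injective on any subspace $W$ on which $\cl$ is negative definite, and it will carry $W$ into the negative cone of $\cl_+$. Since $n(\cl_+)=1$ by Proposition~\ref{prop:23}, this forces $\dim W\le 1$, i.e.~$n(\cl)\le 1$.

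For the inclusion $\mathrm{span}(\Psi_1,\Psi_2)\subset Ker(\cl)$, I will verify directly: $\cl\Psi_1=0$ follows from $M\phi=\tfrac12\phi^2\phi'-\tfrac12\phi^2\phi'=0$ and $\cl_-\phi=0$. For $\Psi_2$ I will use $\cl_+\phi'=0$ together with the two identities $M(\phi^3)=\phi^4\phi'$ and $\cl_-(\phi^3)=4 M^*\phi'$, both of which follow from eliminating $\phi''$ via the profile equation \eqref{30}.

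The reverse inclusion will be obtained by a Schur-complement reduction. Given $(u_1,u_2)\in Ker(\cl)$, the second equation $\cl_- u_2=-M^*u_1$ is solvable because $\dpr{M^*u_1}{\phi}=\dpr{u_1}{M\phi}=0$ is automatic. Writing $u_2=\phi\tilde u_2$ and using the identity $\phi\cdot\cl_-(\phi\tilde u_2)=-(\phi^2\tilde u_2')'$, one reduces the equation to $(\phi^2\tilde u_2')'=-\tfrac12(\phi^3 u_1)'$, which integrates once to
$$
\tilde u_2'(y)=-\tfrac12\phi(y)u_1(y)-\frac{C_0}{\phi^2(y)},\qquad C_0=-\frac{\dpr{u_1}{\phi}}{2\|\phi^{-1}\|_{L^2}^2},
$$
with $C_0$ fixed by the periodicity constraint $\int \tilde u_2'\,dy=0$. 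Substituting back into $\cl_1 u_1+Mu_2=0$ and using $\cl_1=\cl_+ +\tfrac14\phi^4$ produces the reduced scalar equation
$$
\mathcal S u_1 := \cl_+ u_1+\alpha\,\dpr{u_1}{\phi}\,\phi = 0,\qquad \alpha:=\frac{1}{4\|\phi^{-1}\|_{L^2}^2}>0.
$$
The additive freedom $\beta\phi\in Ker(\cl_-)$ absorbed into $u_2$ supplies one extra kernel direction, so $\dim Ker(\cl)=1+\dim Ker(\mathcal S)$.

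It will then remain to show $Ker(\mathcal S)=\mathrm{span}(\phi')$. Clearly $\phi'\in Ker(\mathcal S)$, since $\cl_+\phi'=0$ and $\dpr{\phi'}{\phi}=0$. For any $u\in Ker(\mathcal S)$ with $u\perp\phi'$: either $\dpr{u}{\phi}=0$, which forces $u\in Ker(\cl_+)\cap\phi'^\perp=\{0\}$; or $\dpr{u}{\phi}\neq 0$, yielding $u=-\alpha\dpr{u}{\phi}\cl_+^{-1}\phi$ together with the special algebraic identity $\dpr{\cl_+^{-1}\phi}{\phi}=-1/\alpha$. This last identity is to be excluded by combining Weyl's interlacing $\lambda_k(\cl_+)\le\lambda_k(\mathcal S)\le\lambda_{k+1}(\cl_+)$, the min-max bound $\lambda_2(\mathcal S)\ge\delta>0$ (from \eqref{86} applied with the test subspace $\mathrm{span}(\chi_0,\phi')$), and the simplicity of the bottom of the spectrum of $\mathcal S$: a two-dimensional zero-eigenspace would force $\lambda_0(\mathcal S)=\lambda_1(\mathcal S)=0$, so that $\mathcal S\ge 0$ with $\phi'$ as a sign-changing ground state, contradicting the Perron-Frobenius-type positivity of the ground state of this rank-one-perturbed Schr\"odinger operator on $L^2_{\mathrm{per}}[-T,T]$. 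This will give $\dim Ker(\mathcal S)=1$, whence $Ker(\cl)=\mathrm{span}(\Psi_1,\Psi_2)$. The main obstacle is the integrating-factor computation producing $\mathcal S$ in the clean rank-one form $\cl_++\alpha\phi\otimes\phi$; once this is in place, the kernel analysis via interlacing, \eqref{86}, and ground-state simplicity is quick.
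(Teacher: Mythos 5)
Your argument for $n(\cl)\le 1$ and your Schur--complement reduction are essentially the paper's proof in a cleaner packaging: the paper also uses \eqref{70} to bound the Morse index, absorbs the trivial direction $\beta\phi$ in the second component, and (via a Green's function computation for $\cl_-^{-1}$) reduces the kernel equation on the even--odd sector to the scalar equation $\cl_+ f=-\tfrac14 d_1 d_f\,\phi$ with $d_1=1/\int_0^T\phi^{-2}$, whose solvability obstruction is exactly your condition $1+\alpha\dpr{\cl_+^{-1}\phi}{\phi}\neq 0$ with $\alpha=1/\bigl(4\int_{-T}^T\phi^{-2}\bigr)$ (compare \eqref{s:12}). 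Your rank-one formulation $\mathcal S=\cl_++\alpha\,\phi\otimes\phi$ is a tidy way to organize this, and the interlacing step correctly shows $\dim Ker(\mathcal S)\le 2$, with equality forcing $\lambda_0(\mathcal S)=\lambda_1(\mathcal S)=0$.

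The gap is in the very last step. You exclude the degenerate case by appealing to ``Perron--Frobenius-type positivity of the ground state'' of $\mathcal S$. But $\mathcal S=\cl_++\alpha\,\phi\otimes\phi$ with $\alpha>0$ is a Schr\"odinger operator plus a \emph{positive} nonlocal rank-one term, and for such operators the ground state need not be simple or of one sign: the semigroup $e^{-t\mathcal S}$ is not positivity preserving (already in finite dimensions, $e^{-tP}v=v-(1-e^{-t})\dpr{v}{\phi}\phi/\|\phi\|^2$ turns nonnegative vectors into sign-changing ones), and the standard variational argument fails because replacing $v$ by $|v|$ \emph{increases} the term $\alpha\dpr{v}{\phi}^2$ when $\phi>0$. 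So the asserted contradiction with ``$\mathcal S\ge 0$ and $\phi'$ a sign-changing ground state'' is unjustified. What must actually be ruled out is the single quantitative identity $\dpr{\cl_+^{-1}\phi}{\phi}=-4\int_{-T}^T\phi^{-2}$, and since $\dpr{\cl_+^{-1}\phi}{\phi}$ genuinely changes sign over the parameter domain (Figures \ref{fig:pic4}, \ref{fig:pic5}), no soft sign consideration is available. The paper itself could not dispose of this abstractly: it computes $4\int_{-T}^T\phi^{-2}+\dpr{\cl_+^{-1}\phi}{\phi}$ in closed form in the $(g,\ka,\mu)$ parametrization and verifies symbolically/graphically that it is strictly positive (Figures \ref{fig:Ker1}, \ref{fig:Kerm1}). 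Your proof needs to be completed by this, or some other, verification of that non-degeneracy condition.
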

{\bf Remark:}  We will show later that, as expected,  $n(\cl)=1$.  Unfortunately, this cannot be done directly and requires some implicit analysis later on, see Section \ref{sec:5.2a} below.
\begin{proof}
	Based on the formula \eqref{70} and  \eqref{assump},  we see that if $u_1\perp \chi_0$ (where $\chi_0$ is the ground state for $\cl_+$), then $\cl|_{\{\chi_0\}^\perp}\geq 0$. Thus,
	$$
	\inf\limits_{U\perp \left(\begin{array}{c}\chi_0 \\ 0 \end{array}\right) } \dpr{\cl U}{U}\geq 	\inf\limits_{u_1\perp\chi_0} \dpr{\cl_+u_1}{u_1}\geq 0.
	$$
	This implies that $n(\cl)\leq 1$, as announced. We now discuss the structure of $Ker(\cl)$.
	
	It is convenient to split $L^2\times L^2= L^2_{even}\times L^2_{odd} \oplus L^2_{odd}\times L^2_{even}=:X_{e,o}\oplus X_{o,e}$ and
	since $\cl$ acts invariantly on those subspaces, it suffices to determine $Ker(\cl)$ on each.
	
	Let us first show that $X_{o, e}=span[\Psi_1, \Psi_2]$. Indeed, for $u_1\in L^2_{odd}, u_2\in L^2_{even}$, we have that $u_1\perp \chi_0$, whence by the argument above, for $U=\left(\begin{array}{c} u_1 \\ u_2\end{array}\right)\in X_{o,e}\cap Ker(\cl)$,
	$$
0= \dpr{\cl U}{U}\geq 	  \dpr{\cl_+u_1}{u_1}\geq 0.
	$$	
Recalling again that $u_1\perp \chi_0$, this implies that $u_1=0$ or $u_1=c \phi'$. From \eqref{70}, we have that the integral is zero as well, whence either $u_2=const. \phi$, when $u_1=0$ or else $u_2=-\f{c}{4} \phi^3$, when $u_1=c\phi'$.  This completes the analysis on $X_{o, e}$.

Next, we show that $Ker(\cl)\cap X_{e,o}=\{0\}$. This is a bit more complicated. Let $U=\left(\begin{array}{c} f \\ g \end{array}\right)\in X_{e,0}\cap Ker(\cl)$. We set
	\begin{equation}\label{l1.3}
	\left| \begin{array}{ll}
	\cl_1f+Mg=0 \\
	\\
	M^*f+\cl_2 g=0.
	\end{array} \right.
	\end{equation}
	Not that $\dpr{M^*f}{\phi}=\dpr{f}{M\phi}=0$ and  $Ker[\cl_2]=Ker[\cl_-]=span[\phi]$, so the second equation in (\ref{l1.3}) is solvable and in fact
	\begin{equation}
	\label{l1.4}
	g=-\cl_2^{-1}M^*f.
	\end{equation}
	Next, we will  be constructing Green function for the operator $\cl_2^{-1}=\cl_-^{-1}$.
	We have $\cl_2[\phi]=0$. The normalized function
	$$
	\psi(x)=\phi(x)\int_0^{x}{\frac{1}{\phi^2(s)}}ds, \; \; \left| \begin{array}{cc} \phi& \psi \\ \phi' & \psi'\end{array}\right|=1
	$$
	also solves  $\cl_2\psi=0$. The Green function, for an even function $f$  is represented by
	$$
	\cl_2^{-1}M^*f(x)=\phi(x)\int_{0}^{x}{\psi(s)M^* f(s)}ds-\psi(x)\int_{0}^{x}{\phi(s)M^*f(s)}ds+C_{M^*f}\psi(x),
	$$
	where $C_{M^*f}$ is a constant to be selected, so that $\cl_2^{-1}f$ is periodic with same period as $\phi$.  Integrating by parts yields
	\begin{equation}
	\label{96}
	\int_{0}^{x}\phi(s)  M^*f(s) ds=-\frac{1}{2}\phi^{3}f+
	\frac{1}{2}\phi^{3}(0)f(0)
	\end{equation}
	and
	\begin{equation}\label{96a}
	\int_{0}^{x}{\psi(s) M^*f(s)}ds=-\frac{1}{2}\phi^2\psi f+\frac{1}{2}\int_{0}^{x}{\phi f}.
	\end{equation}
	Also,
	\begin{eqnarray*}
		C_{M^*f} &=& -\frac{\phi(T)}{\psi(T)}\int_{0}^{T}{\psi M^*f}+\int_{0}^{T}{\phi M^*f}=-\frac{1}{2}\frac{1}{\int_{0}^{T}{\frac{1}{\phi^2}}dx}\int_{0}^{T}{\phi f}+
		\frac{1}{2}\phi^{3}(0)f(0)\\
		&=& -  \f{d_f}{2 d_1}+	\frac{1}{2}\phi^{3}(0)f(0).
	\end{eqnarray*}
	where
	$$
	d_1=\frac{1}{\int_{0}^{T}{\frac{1}{\phi^2}}dx}, \; \; d_f=\int_{0}^{T} \phi f.
	$$
	All in all,
	\begin{equation}
	\label{012}
	g=\f{d_f}{2d_1}\psi -\f{\phi}{2} \int_0^x \phi f
	\end{equation}
	Clearly, the formula for $g$ cannot be complete, without finding $f$, so we take this on now.
	Plugging \eqref{l1.4}   in the first equation of \eqref{l1.3} results in
	\begin{equation}
	\label{l1.5}
	\cl_1 f - M \cl_2^{-1}M^*f=0
	\end{equation}
	After some algebraic manipulations, we obtain
	\begin{eqnarray*}
		M \cl_2^{-1}M^*f &=& \frac{1}{4}\phi^{4}f-
		\frac{1}{4}\phi^{3}(0)f(0)\phi (x)+
		\frac{1}{2}C_{M^*f}\phi(x)\\
		\\
		&=&\  \f{\phi^{4}}{4}f-\f{d_1 d_f}{4} \phi.
	\end{eqnarray*}
	Finally, the left-hand side of   (\ref{l1.5}) is now in the form
	$$
	\cl_1 f-M \cl_2^{-1}M^*f=\cl_+f+\frac{1}{4}  d_1 d_f\phi,
	$$
	so, the equation to be solved is  $\cl_+f=-\frac{1}{4}  d_1 d_f\phi $. This equation has  a solution, since $Ker(\cl_+)=span[\phi']$. We obtain,
	\begin{equation}\label{l1.8}
	f=-\f{d_1 d_f}{4}\cl_+^{-1}[\phi].
	\end{equation}
{\it This still does not mean that we have found an element in $Ker(\cl)\cap X_{e,o}$, it simply means that if there is one, it must be in the form \eqref{l1.8}.} There is still a consistency condition to be satisfied, namely about $d_f$. We take dot product with $\phi$. We obtain
$
2 d_f=\dpr{f}{\phi}=-\f{d_1 d_f}{4} \dpr{\cl_+^{-1} \phi}{\phi},
$
	a relation that must be satisfied, in order to have a non-trivial element in $Ker(\cl)\cap X_{e,o}$. This solvability condition amounts to
	\begin{equation}
	\label{s:12}
		d_f(8 + d_1 \dpr{\cl_+^{-1} \phi}{\phi})=0.
	\end{equation}
	Clearly, if $d_f=0$, $f$   is trivial and this is not a new element of $Ker(\cl)$.  So, it remains that \\ $8 + d_1 \dpr{\cl_+^{-1} \phi}{\phi}=0$. Equivalently, it must be that $4 \int_{-T}^T \f{1}{\phi^2} + \dpr{\cl_+^{-1} \phi}{\phi}=0$. We  however need this quantity later on, so we have computed it: see \eqref{k:68} for $\dpr{\cl_+^{-1} \phi}{\phi}$ as well as the formula
	$$
	  \int_{-T}^T \f{1}{\phi^2} =  2g \int_{-K(\ka)}^{K(\ka)} \f{1}{\vp(2g \xi)} d\xi= \f{4g}{\vp_1\vp_3}
	  \left(\vp_3 K(\ka)+(\vp_1-\vp_3) \Pi\left[\f{\vp_1(\vp_3-\vp_2)}{\vp_3(\vp_1-\vp_2)}, \ka\right]\right),
	$$
	which we have done symbolically  in \textsc{Mathematica}. As a result, we can display the following pictures, Figure \ref{fig:Ker1} and  Figure \ref{fig:Kerm1},  which show that $4 \int_{-T}^T \f{1}{\phi^2} + \dpr{\cl_+^{-1} \phi}{\phi}>0$,
	\begin{figure}
		\centering
		\includegraphics[width=0.7\linewidth]{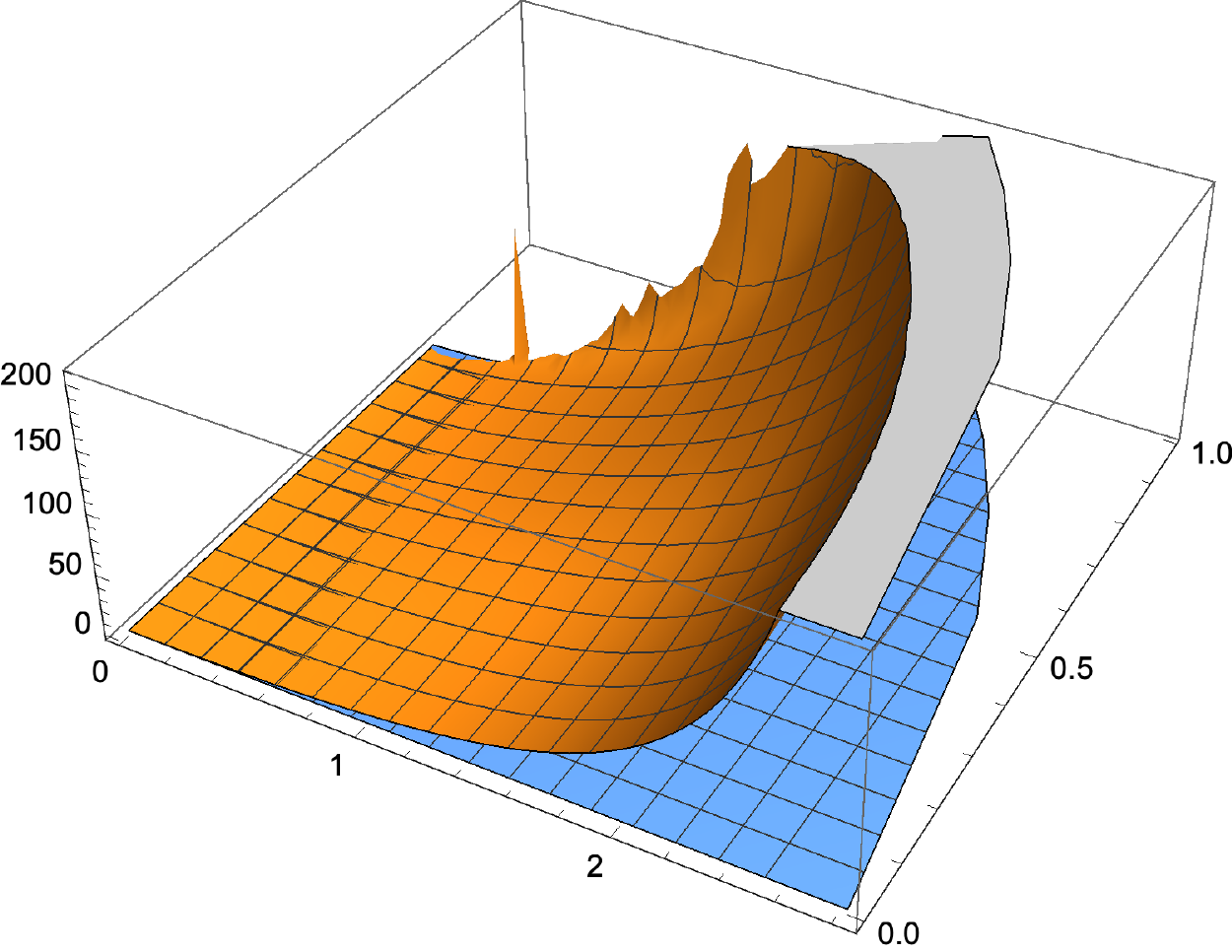}
		\caption{Graph of 	$4 \int_{-T}^T \f{1}{\phi^2}+\dpr{\cl_+^{-1}\phi}{\phi}$  , for $\mu=1$}
		\label{fig:Ker1}
	\end{figure}
	\begin{figure}
		\centering
		\includegraphics[width=0.7\linewidth]{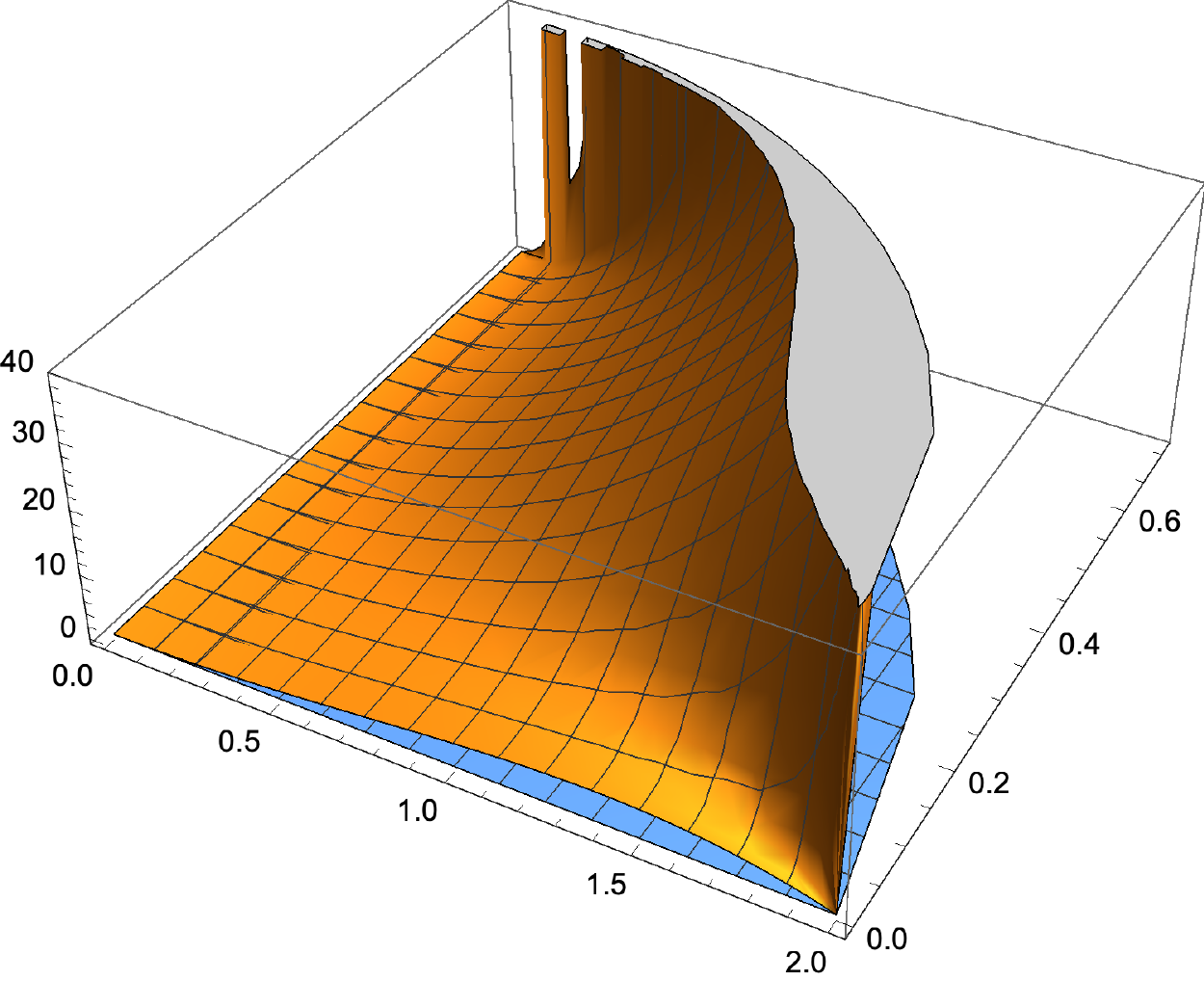}
		\caption{Graph of 	$4 \int_{-T}^T \f{1}{\phi^2}+\dpr{\cl_+^{-1}\phi}{\phi}$  , for $\mu=-1$}
		\label{fig:Kerm1}
	\end{figure}
	This finishes the verification that $Ker(\cl)\cap X_{e,o}=\{0\}$.
\end{proof}

\subsection{Preliminary calculations for the matrix $D$}
\label{sec:4.2}
According to the setup described in \eqref{dij}, we setup the matrix $D$ as follows
$$
D=\left(\begin{array}{cc}
\dpr{\cl^{-1} \cj \Psi_1}{\cj \Psi_1} & \dpr{\cl^{-1} \cj\Psi_1}{\cj \Psi_2} \\
\dpr{\cl^{-1} \cj\Psi_2}{\cj \Psi_1} & \dpr{\cl^{-1} \cj \Psi_2}{\cj \Psi_2}
\end{array}
\right),
$$
where $\Psi_1, \Psi_2$ span $Ker(\cl)$, they are  described in Proposition \ref{le:10}. More explicitly,
\begin{eqnarray*}
	D_{11}=\dpr{\cl^{-1} \cj \Psi_1}{\cj \Psi_1}  &=&
	\dpr{\cl^{-1} \left(\begin{array}{c} \phi \\ 0 \end{array}\right)}{\left(\begin{array}{c} \phi \\ 0 \end{array}\right)} , \\
	D_{1 2}=D_{2 1} = \dpr{\cl^{-1} \cj\Psi_1}{\cj \Psi_2} &=& \dpr{\cl^{-1} \cj\Psi_2}{\cj \Psi_1} = \dpr{\cl^{-1} \left(\begin{array}{c} \phi \\ 0 \end{array}\right)}{
		\left(\begin{array}{c} \f{\phi^{3}}{4} \\ \phi'\end{array}\right)}, \\
	D_{2 2} = \dpr{\cl^{-1} \cj \Psi_2}{\cj \Psi_2} &=&
	\dpr{\cl^{-1} \left(\begin{array}{c}  \f{\phi^{3}}{4}, \\ \phi'\end{array}\right) }{
		\left(\begin{array}{c}  \f{\phi^{3}}{4} \\ \phi'\end{array}\right)},
\end{eqnarray*}
where the normalization constant satisfies $k\int_0^L \phi^2 =
\f{1}{4} \int_0^L \phi^{4}$. We observe that since
$$
\left(\begin{array}{c} \phi \\ 0 \end{array}\right), \left(\begin{array}{c}  \f{\phi^{3}}{4} \\ \phi'\end{array}\right)\perp span[\Psi_1, \Psi_2]=Ker[\cl]
$$
we have that $\cj:Ker(\cl)\to Ker(\cl)^\perp$, so it is justified to take $\cl^{-1}$ in the above formulas.

As one can imagine, these quantities are quite hard to compute in general, especially with the involvement of the matrix Schr\"odinger operator $\cl$.    In fact, we have the following proposition, which establishes a reduced sufficient condition for stability of the waves $\phi$.
\begin{proposition}
	\label{prop:32}
Assume \eqref{assump}.	If
$$
D_{11}=\dpr{\cl^{-1} \left(\begin{array}{c} \phi \\ 0 \end{array}\right)}{\left(\begin{array}{c} \phi \\ 0 \end{array}\right)}<0,
$$
then $n(D)=1$ and the corresponding wave $\phi$ is stable.
 In fact, we have the formula
	\begin{equation}
	\label{100}
	D_{11}= \f{4\int_{-T}^T \f{1}{\phi^2}}{4\int_{-T}^T \f{1}{\phi^2}+ \dpr{\cl_+^{-1}[\phi]}{\phi}} \dpr{\cl_+^{-1}[\phi]}{\phi}
	\end{equation}
\end{proposition}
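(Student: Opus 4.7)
The plan is to first compute $D_{11}$ explicitly in closed form, and then to extract spectral stability from the Hamiltonian index identity \eqref{e:20}.

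Since $\left(\begin{array}{c} \phi \\ 0 \end{array}\right)\perp Ker(\cl)=span\{\Psi_1,\Psi_2\}$, I solve the system $\cl\left(\begin{array}{c} f \\ g \end{array}\right)=\left(\begin{array}{c} \phi \\ 0 \end{array}\right)$ on $Ker(\cl)^\perp$. By parity the natural ansatz is $f$ even and $g$ odd, which places us in the subspace $X_{e,o}$ on which Proposition \ref{le:10} established that $\cl$ is invertible. The component equations read $\cl_1 f+Mg=\phi$ and $M^*f+\cl_2 g=0$. The second is solvable since $\dpr{M^*f}{\phi}=\dpr{f}{M\phi}=0$, and I invert it by exactly the Green-function procedure used in Proposition \ref{le:10}, obtaining
$$g=\frac{d_f}{2d_1}\psi-\frac{\phi}{2}\int_0^x\phi f,\qquad d_f=\int_0^T\phi f,\qquad d_1=\Big(\int_0^T\phi^{-2}\Big)^{-1}.$$
The algebraic identity $M\cl_2^{-1}M^*f=\tfrac{\phi^4}{4}f-\tfrac{d_1 d_f}{4}\phi$ derived there, combined with $\cl_1=\cl_++\tfrac{1}{4}\phi^4$, collapses the first equation to
$$\cl_+ f=\Big(1-\frac{d_1 d_f}{4}\Big)\phi.$$
Because $Ker(\cl_+)=span[\phi']$ is one-dimensional and odd, the even right-hand side lies in $Ran(\cl_+)$, so $f=\bigl(1-\tfrac{d_1 d_f}{4}\bigr)\cl_+^{-1}\phi$ within the even sector. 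Pairing this identity with $\phi$ yields a scalar linear equation in $d_f$ that can be solved in closed form; rewriting the $[0,T]$ quantities as $[-T,T]$ integrals via evenness then produces
$$D_{11}=\dpr{f}{\phi}=\frac{4\int_{-T}^T \phi^{-2}\cdot\dpr{\cl_+^{-1}\phi}{\phi}}{4\int_{-T}^T \phi^{-2}+\dpr{\cl_+^{-1}\phi}{\phi}},$$
which is \eqref{100}.

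For the stability conclusion, $D_{11}<0$ automatically forces $n(\cl)\geq 1$: otherwise $\cl|_{Ker(\cl)^\perp}\geq 0$ would imply $\cl^{-1}|_{Ker(\cl)^\perp}\geq 0$ and hence $D_{11}\geq 0$. Combined with $n(\cl)\leq 1$ from Proposition \ref{le:10} this gives $n(\cl)=1$. The index identity \eqref{e:20} then reads $k_r+2k_c+2k_i^-=1-n(D)$, forcing $n(D)\leq 1$. The assumption $D_{11}<0$ produces, by min-max, a one-dimensional negative subspace for the symmetric $2\times 2$ matrix $D$, so $n(D)=1$, and therefore $k_{Ham}=0$, yielding spectral stability.

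The main obstacle is the bookkeeping in the Green-function reduction and the self-consistency step that determines $d_f$; once those algebraic identities are secured, the stability conclusion from the index count is immediate. A secondary subtlety is the non-degeneracy hypothesis $\det(D)\neq 0$ built into \eqref{e:20}: since $n(D)=1$ forces the two eigenvalues of $D$ to have signs $(-,\geq 0)$, degeneracy can only occur on the thin locus where the non-negative eigenvalue vanishes, which can be handled by a continuity/perturbation argument in the wave parameters $(g,\ka,\mu)$.
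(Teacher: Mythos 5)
Your proposal is correct and follows essentially the same route as the paper: the same Green--function reduction of $\cl(f,g)^T=(\phi,0)^T$ on $X_{e,o}$ to $\cl_+f=(1-\tfrac{d_1d_f}{4})\phi$, the same self-consistency pairing with $\phi$ to determine $d_f$ and hence \eqref{100}, and the same index-count argument ($D_{11}<0\Rightarrow n(\cl)\geq 1$, Proposition \ref{le:10} gives $n(\cl)\leq 1$, then \eqref{e:20} pins $n(D)=1$ and $k_{Ham}=0$). Your closing remark on the non-degeneracy hypothesis $\det(D)\neq 0$ in \eqref{e:20} is a fair point that the paper's proof leaves implicit, but it does not alter the argument.
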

{\bf Remark:}  Note that the quantity in the denominator is positive, as established in the course of the proof of Proposition \ref{le:10}, see also Figures \ref{fig:Ker1} and \ref{fig:Kerm1}, which graphically confirm this. Thus, for all parameter values
$$
sgn(D_{11})=sgn(\dpr{\cl_+^{-1}[\phi]}{\phi})
$$
\begin{proof}
	First, if $\dpr{\cl^{-1} \left(\begin{array}{c} \phi \\ 0 \end{array}\right)}{\left(\begin{array}{c} \phi \\ 0 \end{array}\right)}<0$, it follows from the min-max principle that $\cl^{-1}$ has a negative eigenvalue, which implies that $\cl$ has one too, hence $n(\cl)\geq 1$.  Since, we have already established, in Proposition \ref{le:10},  that $n(\cl)\leq 1$, it would follows that $n(\cl)=1$. Furthermore, for the  matrix $D\in M_{2,2}$, we have $D_{11}=\dpr{D e_1}{e_1}<0$ means that $D$ too has a negative eigenvalue. Thus, $n(D)\geq 1$. Since we already know that $n(\cl)=1$, formula \eqref{20} implies that $n(D)\leq 1$, so $n(D)=1$ and hence, we have stability, from \eqref{e:20}. Thus, $D_{11}<0$ is sufficient  for stability.
	
	We now take on the question for the actual computation of  $D_{11}$. Despite being arguably the easiest entry in the matrix $D$ to calculate, it  is not an easy task to actually compute it. Its analysis is  related to the analysis of $Ker(\cl)$ in Proposition \ref{le:10}. Let
	\begin{equation}
	\label{93a}
	\cl\left(\begin{array}{ll} f\\ g \end{array}\right)=\left(\begin{array}{ll} \phi\\ 0 \end{array}\right),
	\end{equation}
which as we have observed is solvable in $X_{e,o}$, due to the fact that $\left(\begin{array}{ll} \phi\\ 0 \end{array}\right)\perp Ker[\cl]$.
We note that such a solution comes with  the property $\left(\begin{array}{ll} f\\ g \end{array}\right)\in Ker(\cl)^\perp$.
	So, \eqref{93a} is equivalent to
	\begin{equation}\label{l1.3a}
	\left| \begin{array}{ll}
	\cl_1f+Mg=\phi \\
	\\
	M^*f+\cl_2 g=0.
	\end{array} \right.
	\end{equation}
	Proceeding as in the proof of Proposition \ref{le:10},
	\begin{equation}
	\label{l1.5a}
	\cl_1 f - M \cl_2^{-1}M^*f=\phi .
	\end{equation}
	 resulting in
   \begin{equation}
   \label{l1.8a}
   \cl_+f+\frac{1}{4}  d_1 d_f\phi= \phi,
   \end{equation}
   where
   $$
   d_1=\frac{1}{\int_{0}^{T}{\frac{1}{\phi^2}}dx}, \; \; d_f=\int_{0}^{T} \phi f.
   $$
   We obtain,
   \begin{equation}\label{l1.9}
   f=(1-\f{d_1 d_f}{4}) \cl_+^{-1}[\phi].
   \end{equation}

   Next, we determine $d_f$. We simply take dot product of \eqref{l1.9} with $\phi$. We obtain the equation
   $$
   2 d_f=(1-\f{d_1 d_f}{4}) \dpr{\cl_+^{-1} \phi}{\phi}.
   $$
   Let us note that this equation must have solutions as \eqref{l1.3} does have a solution. In particular,
   \begin{equation}
   \label{97}
   d_f=\f{\dpr{\cl_+^{-1} \phi}{\phi}}{2+\frac{d_1}{4}  \dpr{\cl_+^{-1}\phi}{\phi}},
   \end{equation}
Since by definition, $D_{11}=2 d_f$, we arrive at the formula \eqref{100}.
\end{proof}
It becomes clear that in order to check for the stability, we need to be able to calculate various quantities like $\dpr{\cl_+^{-1} \phi}{\phi}$. We have already computed that, subject to rescaling, see \eqref{j:22}.
\section{Analysis of the spectral stability for the bell-shaped waves of DNLS}
\label{sec:5}
In this section, we use our preliminary calculations, which allow us to compute the various quantities involved in the matrix $D$.

\subsection{Computing $\dpr{\cl_+^{-1}\phi}{\phi}$}
\label{sec:5.1}
In the calculations for $D_{11}$, see \eqref{100}, a major role is played by $\dpr{\cl_+^{-1}\phi}{\phi}$. In order to compute that, we use the formula \eqref{j:22}, which gives $\tilde{L}^{-1} Q$, in the rescaled framework of Section \ref{sec:3.2}.

So, let us continue to use the setup introduced in Section \ref{sec:3.2}
and more precisely in the equation  \eqref{d:10}. By taking dot product of \eqref{j:22} with $Q$, we obtain
\begin{eqnarray*}
	\dpr{\tilde{\cl}_+^{-1} Q}{Q} &=& 16 T^{-2} \f{\f{c_\mu}{2} \dpr{Q}{Q} +c_\mu \f{K(\ka)}{K'(\ka)} \dpr{Q_\ka}{Q} -\left(c+c_\ka \f{K(\ka)}{K'(\ka)}\right)\dpr{Q_\mu}{Q}}{c+c_\ka \f{K(\ka)}{K'(\ka)} - 2\mu c_\mu},
\end{eqnarray*}
where recall that the function $c=c(A,B,C)$ is given explicitly in \eqref{r:51}, while the quantities $A,B,C$, all in terms of $g,\ka, \mu$ are explicitly in \eqref{r:10}, \eqref{r:20}, \eqref{r:30}.

We have, $\dpr{Q}{Q}=T^{-1} \|\phi\|^2$. Also, since $T=T(g, \ka)$ is independent on $\mu$,
$$
\dpr{Q_\mu}{Q}=\int_{-1}^1 Q_\mu(\xi) Q(\xi) d\xi=\f{1}{2}  \p_\mu \int_{-1}^1 Q^2(\xi)d\xi= \f{1}{2}  \p_\mu T^{-1} \|\phi\|^2=\f{T^{-1}}{2} \p_\mu \|\phi\|^2
$$
On the other hand,
$$
\dpr{Q_\ka}{Q}=\int_{-1}^1 Q_\ka(\xi) Q(\xi) d\xi=\f{1}{2}  \p_\ka \int_{-1}^1 Q^2(\xi)d\xi= \f{1}{2}  \p_\ka[ T^{-1} \|\phi\|^2]=\f{T^{-1}}{2} \p_\ka \|\phi\|^2 - \f{T^{-2} T_\ka}{2} \|\phi\|^2.
$$
Thus, we have reduced matters to computing the following formula
\begin{eqnarray*}
	\nonumber
	\dpr{\tilde{\cl}_+^{-1} Q}{Q} &=&  16 T^{-2} \f{\f{c_\mu}{2} T^{-1} \|\phi\|^2  +c_\mu \f{K(\ka)}{K'(\ka)} (\f{T^{-1}}{2} \p_\ka \|\phi\|^2 - \f{T^{-2} T_\ka}{2} \|\phi\|^2) -\left(c+c_\ka \f{K(\ka)}{K'(\ka)}\right)\f{T^{-1}}{2} \p_\mu \|\phi\|^2}{c+c_\ka \f{K(\ka)}{K'(\ka)} - 2\mu c_\mu}\\
	&=& 16 T^{-3}  \f{\f{c_\mu}{2}  \|\phi\|^2  +c_\mu \f{K(\ka)}{K'(\ka)} (\f{1}{2} \p_\ka \|\phi\|^2 - \f{K'(\ka)}{2 K(\ka)} \|\phi\|^2) -\left(c+c_\ka \f{K(\ka)}{K'(\ka)}\right)\f{1}{2} \p_\mu \|\phi\|^2}{c+c_\ka \f{K(\ka)}{K'(\ka)} - 2\mu c_\mu}=\\
	&=& 8 T^{-3}  \f{c_\mu \f{K(\ka)}{K'(\ka)}\p_\ka \|\phi\|^2   -\left(c+c_\ka \f{K(\ka)}{K'(\ka)}\right) \p_\mu \|\phi\|^2}{c+c_\ka \f{K(\ka)}{K'(\ka)} - 2\mu c_\mu}.
\end{eqnarray*}
Since $\dpr{\tilde{\cl}_+^{-1} Q}{Q}=T^{-3} \dpr{\cl_+^{-1} \phi}{\phi}$, we arrive at the formula
\begin{equation}
\label{k:68}
\dpr{\cl_+^{-1} \phi}{\phi}=8 \f{c_\mu \f{K(\ka)}{K'(\ka)}\p_\ka \|\phi\|^2   -\left(c+c_\ka \f{K(\ka)}{K'(\ka)}\right) \p_\mu \|\phi\|^2}{c+c_\ka \f{K(\ka)}{K'(\ka)} - 2\mu c_\mu}.
\end{equation}
As we saw earlier, the denominator is never zero, per explicit calculations done earlier, see Figure  \ref{fig:pic1} and Figure \ref{fig:pic2} for a particular slices at $\mu=1$, $\mu=-1$ respectively.

Thus, we need to evaluate $\|\phi\|_{L^2}^2$in terms of $g, \ka, \mu$.
Using \textsc{Mathematica}, we computed
\begin{eqnarray*}
	\|\phi\|_{L^2}^2 &=&   2g \int_{-K(\ka)}^{K(\ka)} \vp(2g \xi) d\xi= 4g\left(\vp_1 K(\ka)+(\vp_3-\vp_1) \Pi\left[\f{\vp_3-\vp_2}{\vp_1-\vp_2}, \ka\right]\right),
\end{eqnarray*}
where $\Pi$ is the elliptic $\Pi$ function.

With this formula in hand,
we compute the quantities in \eqref{k:68} using \textsc{Mathematica}. The results can be seen  in the slices of the graphs for $\dpr{\cl_+^{-1}\phi}{\phi}$, in Figure \ref{fig:pic4}, for $\mu=1$ and Figure \ref{fig:pic5} for $\mu=-1$. From these images (and this is the case for all values of $\mu$ that we have tried), the expression $\dpr{\cl_+^{-1}\phi}{\phi}$ always changes sign over the domain. In particular, there is a always a region $\tilde{\Om}$, where it takes negative values. It follows that $D_{1 1}$  vanishes on a curve in the domains, and it  takes positive and negative values as well.
\begin{figure}
	\centering
	\includegraphics[width=0.7\linewidth]{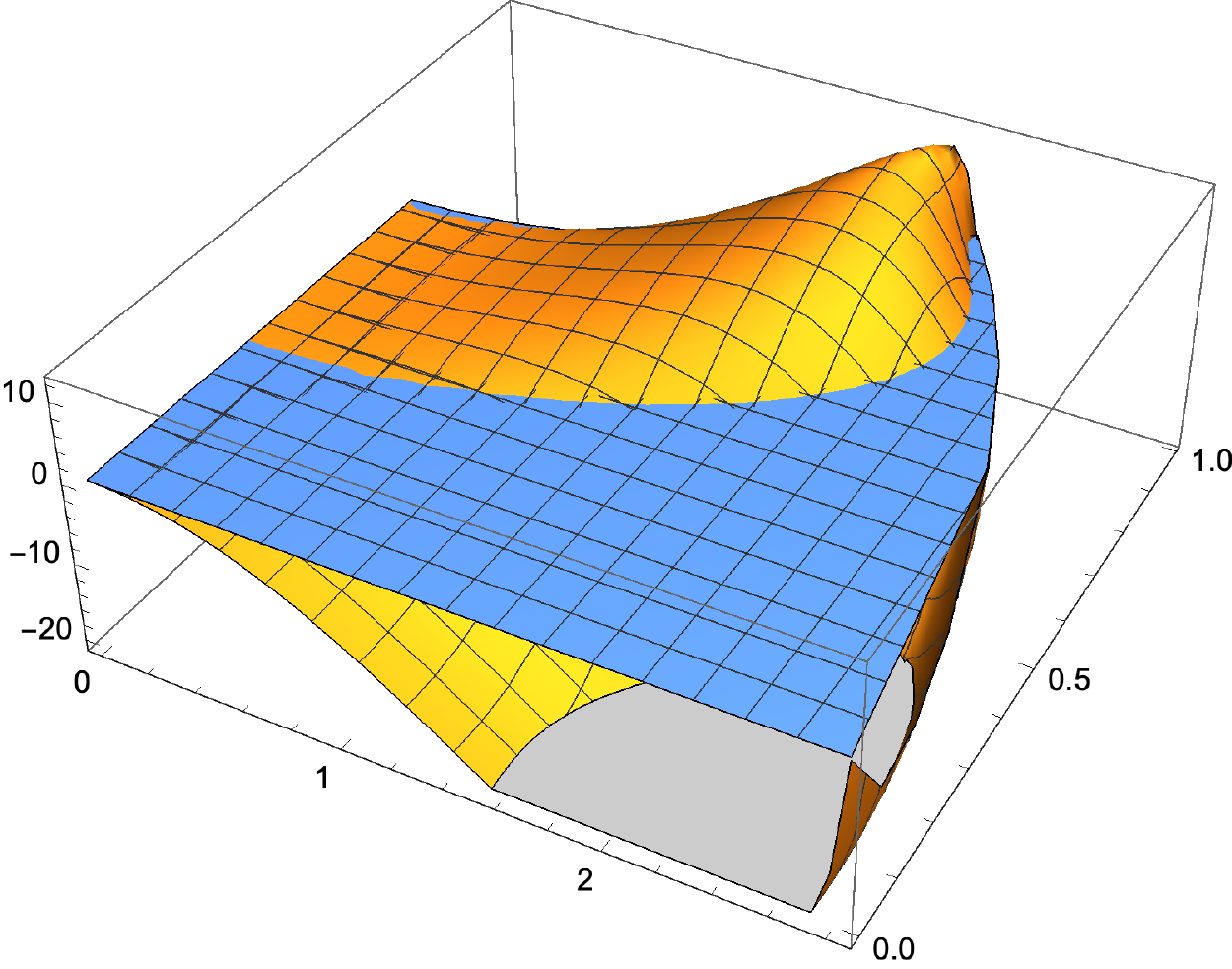}
	\caption{Graph of 	$\dpr{\cl_+^{-1}\phi}{\phi}$  , for $\mu=1$}
	\label{fig:pic4}
\end{figure}
\begin{figure}
	\centering
	\includegraphics[width=0.7\linewidth]{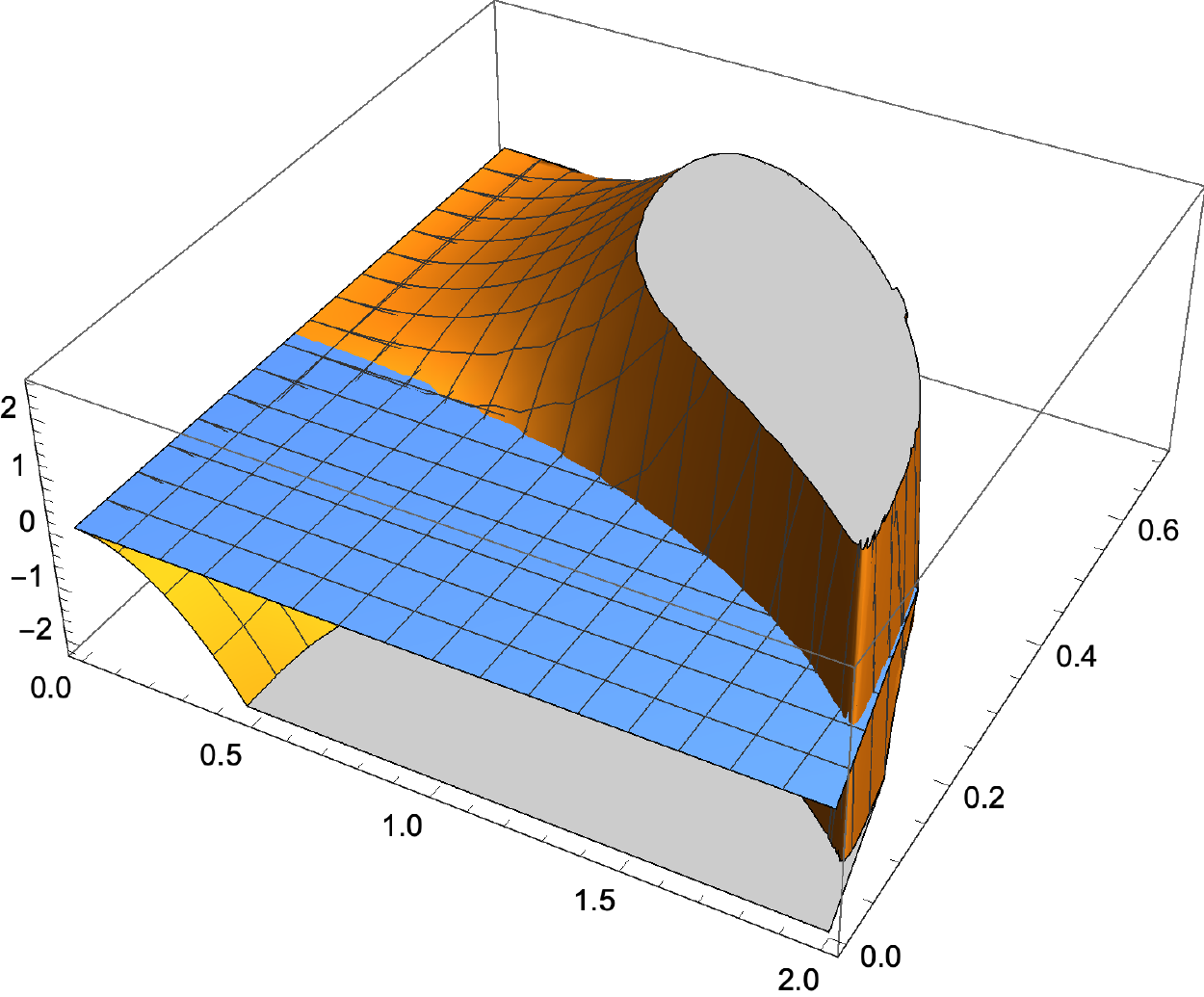}
	\caption{Graph of 	$\dpr{\cl_+^{-1}\phi}{\phi}$  , for $\mu=-1$}
	\label{fig:pic5}
\end{figure}
\subsection{$n(\cl)=1$ and the waves with $\dpr{\cl_+^{-1}\phi}{\phi}<0$ are spectrally  stable}
	\label{sec:5.2a}
In this section, we finally confirm  that $n(\cl)=1$. This is  achieved by piecing together several conclusions established in the previous sections.

We have already seen that $n(\cl)\leq 1$ in Proposition \ref{le:10}. We have   argued in the previous section that that the quantity
$\dpr{\cl_+^{-1}\phi}{\phi}$ is always negative, somewhere in the parameter domain, see also Figures \ref{fig:pic4}, \ref{fig:pic5}. Thus, according to \eqref{100}, $sgn(D_{11})=sgn(\dpr{\cl_+^{-1}\phi}{\phi})=-1$ on a portion of the domain. Thus, it follows that $n(D)\geq 1$, somewhere in the parameter domain. On the other hand, from the instability index theory, see \eqref{e:20}, we always have the inequality $n(\cl)\geq n(D)$, so $n(\cl)\geq 1$ somewhere on the parameter domain. We claim that $n(\cl)=1$ for all values in the parameter domain. Indeed, if $n(\cl)=1$ somewhere on it, a potential transition to
$n(\cl)=0$, due to the continuous dependence on the parameters, happens only if the negative eigenvalue crosses the zero en route to becoming a positive one. This would require, at least for some value of the parameters to have three vectors in $Ker(\cl)$. According to Proposition \ref{le:10}, this is not the case.  Thus, $n(\cl)=1$ for all parameters described in Proposition \ref{prop:50}. Hence, as discussed after \eqref{e:20}, the stability of the waves is equivalent to $n(D)=1$ or equivalently, $det(D)<0$.

In particular, since $n(D)\geq 1$, whenever $\dpr{\cl_+^{-1}\phi}{\phi}<0$, we have that $k_{Ham}=n(\cl)-n(D)=0$, hence spectral stability holds for these values.
\subsection{$D_{1 2}$ and $D_{1 1}$ do not vanish simultaneously: conclusion of the proof}
\label{sec:5.2}
We need to establish  that
$$
D_{1 2}=\dpr{f}{\f{\phi^3}{4}} + \dpr{g}{\phi'},
$$
with $f,g$ as introduced in \eqref{93a} does not vanish simultaneously with $D_{1 1}$.
Let us consider the expression for $D_{1 2}$, exactly on the set where $D_{1 1}=0$. Clearly $D_{1 1}=0$ exactly when $\dpr{\cl_+^{-1}\phi}{\phi}=0$ and since $f= const. \cl_+^{-1} \phi$, precisely when $\dpr{f}{\phi}=0$. Thus, on this set, we can check that  $g=-\f{\phi}{2} \int_0^x f\phi$.   But, on the set $\{D_{11}=0\}$, an integration by parts shows
$$
D_{12}=\dpr{f}{\f{\phi^3}{4}} + \dpr{g}{\phi'}=\f{1}{2} \dpr{f}{\phi^3}=const.  \dpr{\cl_+^{-1} \phi}{\phi^3}.
$$
Thus, it suffices to check $\dpr{\cl_+^{-1}\phi}{\phi}, \dpr{\cl_+^{-1} \phi}{\phi^3}$ do not vanish simultaneously. To this end, recall that $n(\cl_+)=1, Ker[\cl_+)=span[\phi']$, and  denote its positive ground state of $\cl_+$ by $\Psi_0$. Clearly, $\dpr{\phi}{\Psi_0}>0, \dpr{\phi^3}{\Psi_0}>0$. This, there is a scalar $c_0>0$, so that $\dpr{\phi-c_0\phi^3}{\Psi_0}=0$. It follows that
\begin{equation}
\label{nm}
\dpr{\cl_+^{-1} \phi}{\phi} - c_0 \dpr{\cl_+^{-1} \phi}{\phi^3}=
\dpr{\cl_+^{-1} \phi}{\phi-c_0\phi^3}=
\dpr{\cl_+^{-1} P_{\{\Psi_0, \phi'\}^\perp}  \phi}{P_{\{\Psi_0, \phi'\}^\perp} (\phi-c_0\phi^3)}>0,
\end{equation}
 because $P_{\{\Psi_0, \phi'\}^\perp}$ restricts to the positive subspace of $\cl_+^{-1}$. This last inequality \eqref{nm} shows that $\dpr{\cl_+^{-1}\phi}{\phi}, \dpr{\cl_+^{-1} \phi}{\phi^3}$ cannot not vanish simultaneously. }

\section{Stability analysis for the quintic NLS waves}
\label{sec:6}
The spectral problem \eqref{l:14} is easy to analyze, with the tools that we have prepared so far. Indeed, by Proposition \eqref{prop:23}, we have seen that $n(\cl_+)=1$, while $n(\cl_-)=0$. In addition, $Ker(\cl_+)=span[\phi']$, while $Ker(\cl_-)=span[\phi]$. Applying index counting theory (and more specifically \eqref{e:20}), we see that the matrix $D$ is one dimensional, namely $D_{11}=\dpr{\cl_+^{-1} \phi}{\phi}$. In addition,
$k_{Ham}=0$ (and hence the waves are stable) exactly when $\dpr{\cl_+^{-1} \phi}{\phi}<0$ and unstable, if \\ $\dpr{\cl_+^{-1} \phi}{\phi}>0$, with a change of instability (and an additional element in the generalized kernel in the Hamiltonian linearized operator \eqref{l:14} for $\dpr{\cl_+^{-1} \phi}{\phi}=0$. Thus, we need to find $\dpr{\cl_+^{-1} \phi}{\phi}$, for this new restricted set of parameters. We set on to describe the waves in the style of Proposition \ref{prop:50}.
\begin{proposition}
	\label{prop:51}
	Let
	\begin{equation}
	\label{384}
	0<g<\infty \ \ \& \ \ \ka\in (0,1) \ \ \& \ \ \mu=\f{4 \sqrt{1-k^2+k^4}}{g^2}
	\end{equation}
	Let the roots
	$\vp_1, \vp_2, \vp_3$ are as described in \eqref{r:50}. Then,  the two family of waves  (corresponding to $\pm$ values of $\mu$) described in   \eqref{2.52} are parametrized by $g,\ka$ are all non-vanishing bell-shaped waves of the quintic NLS.
\end{proposition}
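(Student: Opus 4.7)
The plan is to view \eqref{p:10} as the special case $c=0$ of the DNLS profile equation \eqref{30} and to read off the corresponding constraint in the $(g,\ka,\mu)$ parametrization from Proposition~\ref{prop:50}. Indeed, setting $c=0$ in \eqref{30} yields precisely \eqref{p:10} under the identification $\mu=16\om$. Conversely, every non-vanishing bell-shaped solution of \eqref{p:10} is trivially a non-vanishing bell-shaped solution of \eqref{30} with $c=0$, so by Proposition~\ref{prop:50} it must arise from \eqref{2.5} for some $(g,\ka,\mu)$ in the admissible range \eqref{382} subject to the extra constraint $c(g,\ka,\mu)=0$. Accordingly, Proposition~\ref{prop:51} will follow once this constraint is solved in closed form.

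To extract the constraint, I would use \eqref{r:51}: $c=0$ is equivalent to $AB+BC=AC$, i.e.\ $B(A+C)=AC$. Writing $p=g^{-2}$, the formulas \eqref{r:10}--\eqref{r:30} give $3A=(4-8\ka^2)p+\mu$, $3B=(8-4\ka^2)p-\mu$, $3C=(4+4\ka^2)p+\mu$, and a direct expansion of $3B\cdot 3(A+C)=9AC$ produces a polynomial identity in $(p,\mu)$. The $\mu p$ coefficients match on both sides and the remaining $p^2$ and $\mu^2$ terms collapse to
$$48\,(1-\ka^2+\ka^4)\,p^2=3\mu^2,$$
i.e.\ $\mu^2 g^4=16(1-\ka^2+\ka^4)$. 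Since the quintic NLS requires $\om>0$, hence $\mu=16\om>0$, only the positive root is admissible, and it gives exactly \eqref{384}. (The formally ``negative'' root $\mu=-4\sqrt{1-\ka^2+\ka^4}/g^2$ is the source of the $\pm$ in the statement; it is excluded by $\om>0$.)

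It remains to check that this choice of $\mu$ lies in the admissible region \eqref{382} for every $g>0$ and $\ka\in(0,1)$. Since $\mu g^2=4\sqrt{1-\ka^2+\ka^4}\le 4<8$, the bound $g<\sqrt{8/\mu}$ is automatic. Substituting $\mu g^2=4\sqrt{1-\ka^2+\ka^4}$ into $\ka^2\le(4+\mu g^2)/8$ and $\ka^2\le(8-\mu g^2)/4$ and squaring reduces each inequality to $\ka^2(\ka^2-1)\le 0$ and $\ka^2\le 1$, respectively, both trivial on $(0,1)$. The only non-routine point in the argument is the algebraic cancellation that produces the clean form $\mu^2 g^4=16(1-\ka^2+\ka^4)$; everything else is a direct specialization of Proposition~\ref{prop:50} at $c=0$.
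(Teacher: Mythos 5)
Your proof is correct and follows essentially the same route as the paper: specialize the DNLS profile equation to $c=0$, impose $c=0$ in \eqref{r:51} to get $AB+BC-AC=0$, and solve for $\mu$ to obtain $\mu=\pm 4\sqrt{1-\ka^2+\ka^4}/g^2$. You actually supply more detail than the paper does — the explicit polynomial cancellation yielding $\mu^2 g^4=16(1-\ka^2+\ka^4)$ and the verification of \emph{both} $\ka^2$-constraints in \eqref{382} (the paper only checks $g<\sqrt{8/\mu}$) — so there is nothing to object to.
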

\begin{proof}
	Recall that the waves \eqref{2.52} correspond to those of \eqref{2.5}, with the condition $c=0$. Solving in the formula for $c$ from \eqref{r:51}, $c=0$, we end up with the relation $AB+BC-AC=0$, which is solved in terms of $\mu$ to exactly $\mu=  \f{4 \sqrt{1-k^2+k^4}}{g^2}$. Finally, note that the constraint for $\mu$ ensures that the inequalities required in \eqref{382}, namely
	$
	g<\sqrt{\f{8}{\mu}}.
	$
	is satisfied.
	This completes the proof of Proposition \ref{prop:51}.
\end{proof}
Our next task is to determine the sign of the quantity $\dpr{\cl_+^{-1} \phi}{\phi}$, on the set of parameters outlined in the constraint \eqref{384}. We have plotted the graph of the relevant graph in Figure \ref{fig:pic9}. It shows almost perfect stability result for $0<\ka<0.54$ and instability for $\ka\in (0.54, 1)$.
\begin{figure}
	\centering
	\includegraphics[width=0.7\linewidth]{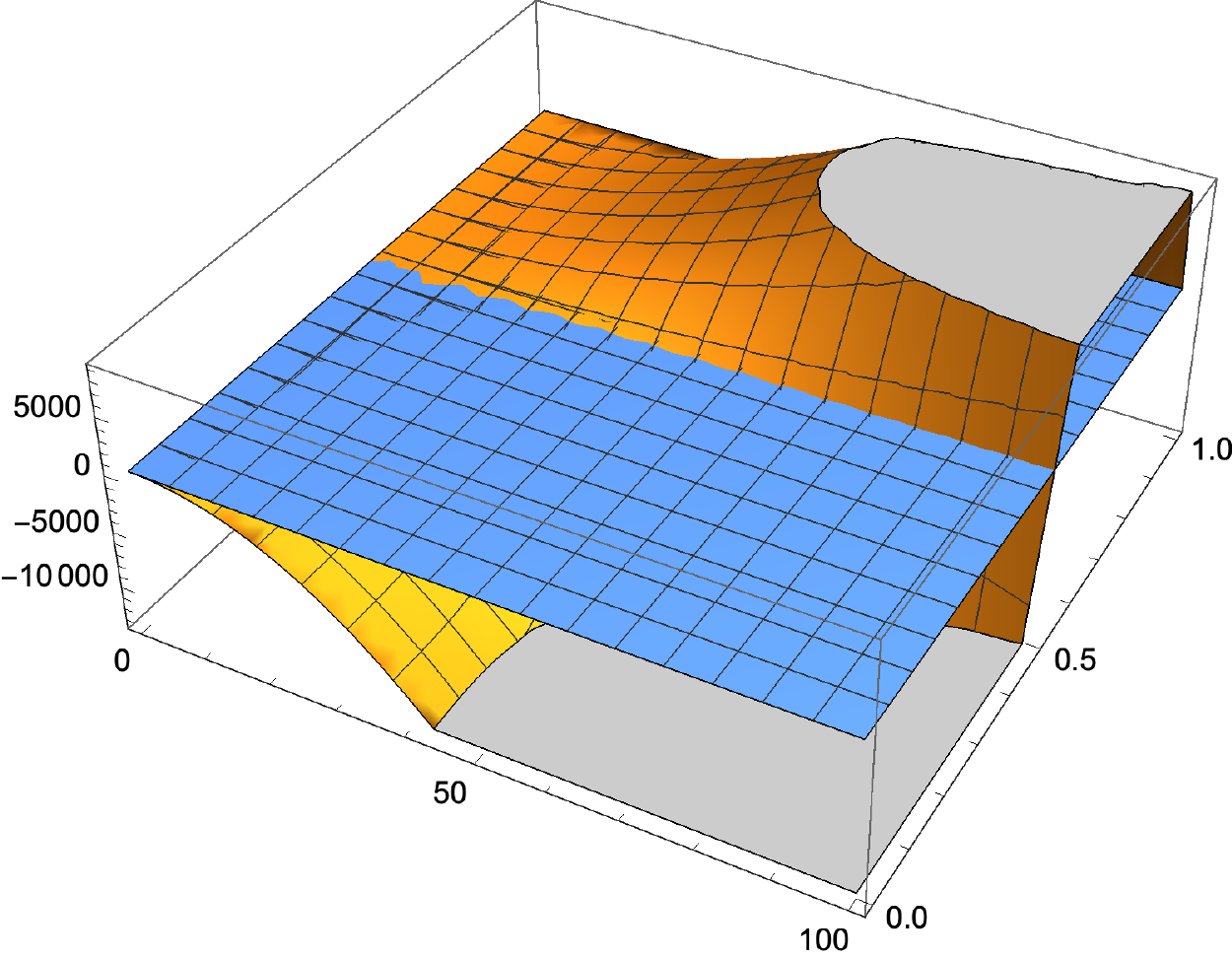}
	\caption{Graph of 	$\dpr{\cl_+^{-1}\phi}{\phi}$ for quintic NLS waves}
	\label{fig:pic9}
\end{figure}

\end{document}